\title{Projective Space in Synthetic Algebraic Geometry}
\begin{document}

\author{Felix Cherubini, Thierry Coquand, Matthias Ritter and David Wärn}

\maketitle

\begin{abstract}
  Synthetic algebraic geometry is a new approach to algebraic geometry. It consists in using homotopy type theory extended with three axioms, together with the interpretation of these in a higher version of the Zariski topos, in order to do algebraic geometry internally to this topos.
  
  In this article, we will show basic properties of projective n-space $\bP^n$ in synthetic algebraic geometry.
  In particular, we show that the automorphism group of $\bP^n$ is $\PGL_{n+1}(R)$ and that the picard group is $\Z$.
  We will provide different proofs of the latter statement, where the most synthetic approach naturally leads to the refined statement that the type of line bundles on $\bP^n$ is the higher type $\Z\times K(R^\times,1)$, where $K(R^\times,1)$ is a delooping of the group of units of the internal base ring $R$.
\end{abstract}

\section*{A brief introduction to synthetic algebraic geometry}

In mathematics, it is common practice to assume a fixed set theory, usually with the axiom of choice, as a common basis. While it is a great advantage to work in one common language and share a lot of the basic constructions, the dual approach of adapting the  ``base language'' to particular mathematical domains is sometimes more concise, provides a new perspective and encourages new proof techniques which would be hard to find otherwise.
We use the word ``synthetic'' to indicate that the latter approach is used,
as it was used by Kock and Lawvere to describe developement of mathematics internal to certain categories \cite{lawvere-categorical-dynamics}, in particular toposes -- a program which dates back as far as 1967.

Already in the 70s in the same program, Anders Kock suggested to use the language of higher-order logic \cite{Church40} to describe the Zariski topos, the collection of sheaves for the Zariski topology \cite{Kock74,kockreyes}, which is the first occurence of synthetic algebraic geometry.
Kock's approach allowed for a more suggestive and geometrical description of schemes.
There is in particular a ``generic local ring'' $R$, which, as a sheaf, associates to any algebra $A$ its underlying set and, as described in \cite{kockreyes}, the projective space $\bP^n$ is then the set of lines in $R^{n+1}$.

Just using category theory is not the same as reasoning synthetically -- for the latter the goal is usually to derive results exclusively in one system,
as Kock and Lawvere did with differential geometry in his work.
The distinction with just using an abstraction like categories is important, since the translation from the synthetic language and back can become cumbersome -- although it is still the goal to derive statements about ordinary mathematical objects in the end.

Starting with Kock and Lawvere's work, more differential geometry was developed synthetically \cite{kock-sdg} along with a study of the models of the theory \cite{moerdijk-reyes}.
One basic axiom of the theory, called the Kock-Lawvere axiom, allows for reasoning with nilpotent infinitesimals. Our version of synthetic algebraic geometry uses a generalisation of this axiom called the duality axiom. Let us now describe the Kock-Lawvere axiom.

The Kock-Lawvere axiom is added to a basic language which can be interpreted in good enough categories, for example toposes. More precisely, we need basic objects like $\emptyset$, $\{\ast\}$ and $\N$ as well as natural constructions like $A\times B$ or $A^B$ for objects $A$, $B$. These constructions come with data, like the projections in the case of $A\times B$, satisfying natural laws. We also need predicates $P(x)$ for elements $x:A$ so we can form subobjects like $\{x:A\mid P(x)\}$.
In this language, we assume there is a fixed ring $R$, which can be thought of as the real numbers. We define $\D(1)=\{x\in R\mid x^2=0\}$ to be the set of all square-zero elements of $R$, then the Kock-Lawvere axiom gives us a bijection
\[ e : R \times R  \to R^{\D(1)} \]
which commutes with evaluation at $0$ and projection to the first factor.
The intuition is that $\D(1)$ is so small that any function on it is linear and therefore determined by its value and its derivative at $0\in\D(1)$.
With this axiom, the derivative at $0:R$ of a function $f : R \to R$ may then be defined as $\pi_2(e^{-1}(f_{\vert \D(1)}))$. This is the start of a convenient development of differential calculus, which doesn't require any further structures on $R$ or other objects. This is the core of the synthetic method: we can work with these differential spaces as if they were sets.

To give an example, the tangent bundle of a manifold $M$ can be defined as $M^{\D(1)}$ and vector fields as sections of the map $M^{\D(1)}\to M$ evaluating at $0$. Then it is easy to see that a vector field is the same as a map $\zeta:\D(1)\to M^M$ with $\zeta(0)=\id_M$, which can be interpreted as an infinitesimal transformation of the identity map. This style of reasoning with spaces as if they were sets is also central in current synthetic algebraic geometry. 

The Kock-Lawvere axiom above are incompatible with the law of excluded middle (LEM) and therefore also with the axiom of choice (AC). Indeed they imply all maps from say $R$ to $R$ are differentiable, which contradicts LEM. 
However restricted versions of LEM and AC are compatible with this axiom. A very basic example is that equality of natural numbers is decidable, meaning that two natural numbers are either equal or not equal. We will latter go back to why full LEM and AC tend to be incompatible with synthetic approach to various geometry.


The use of nilpotent elements to capture infinitesimal quantities as mentioned above was inspired by the Grothendieck school of algebraic geometry and Anders Kock also worked with an extended axiom \cite{Kock74,kockreyes} suitable for synthetic algebraic geometry, where the role of $\D(1)$ above can be taken by any finitely presented affine scheme. In his 2017 doctoral thesis, Ingo Blechschmidt noticed a property holding internally in the Zariski-topos, which he called synthetic quasi-coherence. It is generalised and internalised version of what Kock used. In 2018, David Jaz Myers\footnote{Myers' never published on the subject, but communicated his ideas to Felix Cherubini and in talks to a larger audience \cite{myers-talk1,myers-talk2}.} started working with a specialization of Blechschmidt's synthetic quasi-coherence, which is what we now call \emph{duality axiom}.

To state the duality axiom we need to go from the space $\D(1)$ to spaces that are the common zeros of some finite system of polynomial equations over $R$. Such a space can be encoded independently of the choice of polynomials as a finitely presented $R$-algebra, i.e.\ an $R$-algebra $A$ which is of the form $R[X_1,\dots,X_n]/(P_1,\dots,P_l)$ for some numbers $n,l$ and polynomials $P_i\in R[X_1,\dots,X_n]$.
Then the set of roots of the system is given by the type $\Hom_{\Alg{R}}(A,R)$ of $R$-algebra homomorphisms from $A$ to the base ring. We denote this type by $\Spec A$.
Now the duality axiom states that $\Spec$ is the inverse to exponentiating with $R$, i.e.\ for all 
finitely presented $R$-algebras $A$ the following is an isomorphism:
\[ (a\mapsto (\varphi\mapsto \varphi(a))) : A\to R^{\Spec A}\rlap{.}\]

Myers used homotopy type theory as a base language, which is now the standard in synthetic algebraic geometry. Now we introduce homotopy type theory, in the next paragraphs we will explain how it fits with synthetic algebraic geometry. Homotopy type theory is a language for synthetic homotopy theory.
This means that when using it, we can think of the basic objects of the theory, that is types, directly as homotopy types. This should be contrasted with the usual practice in algebraic topology, which is to implement these homotopy types as topological spaces or Kan complexes.
So the rules of homotopy type theory allow to work with types in very much the same way as one would work with homotopy types in traditional mathematics. 

On the other hand we also use homotopy type theory because it allows to reason synthetically about spaces, as plain type theory does. A key point is that we do not use the law of excluded middle (LEM) or the axiom of choice (AC), which are incompatible with types being interpreted as spaces. Indeed on one hand LEM allows us to find a complement of each subset of a given type $A$, which exposes $A$ as a coproduct.
This is not true for spaces, for example, $\R$ is not the coproduct of the topological subspaces $\{0\}$ and $\R/\{0\}$.
On the other hand AC states that any surjection has a section. This is also not true for any sensible notion of space, in particular it would trivialise all cohomology.
Thus, constructive reasoning in the sense of not using LEM and AC is a necessity if we want to types to be understood as having a spatial structure. It turns out that this is the only obstruction, so the rules of type theory allow to work with type as one would work with spaces in algebraic geometry.

In synthetic algebraic geometry, we work inside homotopy type theory so that types behave both as homotopy types and as spaces from algebraic geometry. This means that we are mixing two synthetic approaches, combining their advantages,
which rests on the possibility of interpreting homotopy type theory in any higher topos \cite{shulman2019all} and not just the higher topos of $\infty$-groupoids. More precisely we think of the higher topos of Zariski sheaves with value in homotopy type. 
The general idea of using homotopy type theory to combine some kind of synthetic, spatial reasoning with synthetic homotopy theory, goes back at least to 2014, to Mike Shulman and Urs Schreiber \cite{Schreiber_2014}.
Schreiber suggested to the HoTT community at various occasions to make use of HoTT as the internal language of higher toposes, where specifities of the topos are accessed in the language via modalities.
This approach was shown to be quite effective and intuitive in Shulman's \cite{shulman-Brouwer-fixed-point} work on mixing synthetic homotopy theory in the form of HoTT and a synthetic approach to topology using a triple of modalities -- a structure called cohesion by Lawvere \cite{Lawvere2007}. A more detailed introduction to homotopy type theory for a general mathematical audience, with an emphasis on this mix of homotopical and spatial structure can be found in \cite{shulman-logic-of-spaces}.

One of the main advantages of using specifically homotopy type theory and not plain type theory, is using synthetic homotopical reasoning to make cohomological computations. Indeed one of Schreiber's motivation was to make use of the modern perspective on cohomology as the connected components of a space of maps in a higher topos. This can be mimicked in HoTT as follows: Given $X$ a type, $A$ an abelian group and $n:\N$, we define the $n$-th cohomology group of $X$ with coefficients in $A$ as
\[ H^n(X,A):=\| X\to K(A,n) \|_0\]
 where $\|\_\|_0$ is the $0$-truncation, an operation which turns any type into a $0$-type, that is a type with trivial higher structure. The type $K(A,n)$ is the $n$-th Eilenberg MacLane space, which can always be constructed for any abelian group $A$ and comes with an isomorphism $\Omega^n(K(A,n))\simeq A$.
With this definition of cohomology groups we can use synthetic homotopy theory to reason about cohomology, which had already been done successfully for the cohomology of homotopy types like spheres and finite cell complexes. It also works for the cohomology of $0$-types such as spaces in synthetic algebraic geometry.
This internal version of cohomology does not agree with the external version mentioned above, indeed it is a sheaf of groups instead of a single group, and it is indexed by an internal natural number instead of an external one. Nevertheless, internal cohomology turned out to be quite useful in practice.

In 2022, trying to use this approach to calculate cohomology groups in synthetic algebraic geometry led to the discovery of what is now called Zariski-local choice \cite{draft},
which is an additional axiom that holds in the higher Zariski-topos.
It is a weakening of the axiom of choice. In homotopy type theory, the axiom of choice can be formulated as follows: For any surjective map $f:X\to Y$, there exists a section, i.e.\ a map $s:Y\to X$ such that $f\circ s=\id_Y$.
Zariski-local choice also states the existence of a section, but only Zariski-locally and only for surjections into an affine scheme: For any surjection $f:E\to \Spec A$,
there exists a Zariski-cover $U_1,\dots,U_n$ of $\Spec A$ and maps $s_i:U_i\to E$ such that $f(s_i(x))=x$ for all $x\in U_i$.

In homotopy type theory, we use the propositional truncation $\|\_\|$ to define surjections and more generally what we mean by ``exists''.
Propositional truncation turns an arbitrary type $A$ into a type $\|A\|$ with the property that $x=y$ for all $x,y:\|A\|$.
Types with this property are called propositions or (-1)-types in homotopy type theory.
Using a univalent universe of types $\mathcal U$ we have that surjection into a type $A$ are the same as type families $F:A\to \mathcal U$, such that we have $\|F(x)\|$ for all $x: A$.
Using type families instead of maps allows us to drop the condition that the maps we get are sections, since we can express it using dependent function types and we arrive at the formulation of Zariski-local choice given below in the list of axioms.
In this instance and many others, homotopy type theory is much more convenient for formal reasoning, which is an advantage when formalizing synthetic algebraic geometry.

In total, the system we use for synthetic algebraic geometry consists of the extension of homotopy type theory postulating a fixed commutative ring $R$ satisfying these three axioms (see below for an explanation of the first one):

\begin{center}
\begin{axiom}[Locality]%
  \label{loc-axiom}
  $R$ is a local ring, i.e.\ $1\neq 0$ and whenever $x+y$ is invertible then $x$ is invertible or $y$ is invertible.
\end{axiom}

\begin{axiom}[Duality]%
  \label{duality-axiom}
  For any finitely presented $R$-algebra $A$, the homomorphism
  \[ a \mapsto (\varphi\mapsto \varphi(a)) : A \to (\Spec A \to R)\]
  is an isomorphism of $R$-algebras.
\end{axiom}

\begin{axiom}[Zariski-local choice]%
  \label{Z-choice-axiom}
  Let $A$ be a finitely presented $R$-algebra
  and let $B : \Spec A \to \mU$ be a family of inhabited types.
  Then there exists a Zariski-cover $U_1,\dots,U_n\subseteq \Spec A$
  together with dependent functions $s_i : (x : U_i)\to B(x)$.
\end{axiom}
\end{center}

As we explained above the duality axiom is a generalisation of the Kock-Lawvere axiom, which was used for convenient infinitesimal computations. It has a lot of consequences. In line with classical algebraic geometry, it shows that we have an anti-equivalence between finitely presented $R$-algebras and affine schemes of finite presentation over $R$.
More surprisingly, it implies that all functions in $R\to R$ are polynomials and that the base ring $R$ has surprisingly strong properties.
For example, for all $x:R$, we have that $x$ is invertible if and only if we have $x\neq 0$.

Surprisingly, the Zariski-local choice axiom was also usable to solve problems which have no obvious connection to cohomology.
For example, it implies that two reasonable definitions of open subsets agree.
In more detail, we can define open subsets using open propositions, which are propositions of the form $r_1\neq 0 \vee\dots\vee r_n\neq 0$ where $r_i:R$.
A subset $U$ of a type $X$ is open if the proposition $x\in U$ is open for all $x:X$.
Given an open subset $U$ of $\Spec A$, using Zariski-local choice we turn these elements $r_1,\dots,r_n$ of the base ring into functions defined Zariski-locally on $\Spec A$.
We can then even prove that $U$ is a union of non-vanishing sets $\mathrm{D}(f_i)$ of global functions $f_i:\Spec A \to R$, which is the second candidate for a definition of open subset alluded to above.
An analogous result holds for closed propositionsand vanishing sets of functions on affine schemes, where closed propositions are propositions of the form $r_1=0\wedge\dots\wedge r_n=0$ where $r_i:R$.

This connection between pointwise and Zariski-local openness is crucial to make the synthetic definition of a scheme work well:
A scheme is a type $X$, that merely has a finite open cover by affine schemes.
To produce interesting examples, it is necessary to use the locality axiom.
This is related to the Zariski topology and ensures that classical examples of Zariski covers can be reproduced.
A central example are the projective spaces $\bP^n$, which can be defined as the quotients of $R^{n+1}/\{0\}$ by the action of $R^\times$ by scaling.
A cover of $\bP^n$ is given by sets of equivalence classes of the form $\{[x_0:\cdots:x_n] \vert x_i\neq 0 \}$, which is clearly open using the pointwise definition.
To see that it is a cover, one has to note that for $x:R^{n+1}$, we have that $x\neq 0$ is equivalent to one of the entries $x_i$ being different from $0$. In synthetic algebraic geometry, this is the case for the base ring $R$ and the proof uses that $R$ is a local ring.

\section*{Content of this article}
\ignore{
Grothendieck advocated for a functor of points approach to schemes early on in his
project of foundation of algebraic geometry (see the introduction of \cite{EGAI}).
In this approach, a scheme is defined as a special kind of (covariant) set valued functor
on the category of commutative rings. This functor should in particular
be a sheaf w.r.t.\ the Zariski topology. As a typical example, the projective space $\bP^n$
is the functor, which to a ring $A$,
associates the set of sub-modules of  rank $1$ of $A^{n+1}$, which are direct factors \cite{Demazure,Eisenbud,Jantzen}.

In the 70s, Anders Kock suggested to use the language of higher-order logic \cite{Church40}
to describe the Zariski topos, the collection of sheaves for the Zariski topology \cite{Kock74,kockreyes}.
This allows for
a more suggestive and geometrical description of schemes.
There is in particular a ``generic
local ring'' $R$, which associates to $A$ its underlying set and, as described in \cite{kockreyes},
the projective space $\bP^n$ is then the set of lines in $R^{n+1}$.

In \cite{draft}, we presented an axiomatisation of the Zariski {\em higher topos} \cite{lurie-htt},
using instead of the language of higher-order logic the language of dependent type theory
with univalence \cite{hott}. The first axiom is that we have a local ring $R$. We then define
an affine scheme to be a type of the form $\Spec(A) = \Hom_{\Alg{R}}(A,R)$ for some finitely presented
$R$-algebra $A$. The second axiom, inspired from the work of Ingo Blechschmidt \cite{ingo-thesis},
states that the evaluation map $A\rightarrow R^{\Spec(A)}$ is a bijection. The last axiom states
that each $\Spec(A)$ satisfies some form of local choice \cite{draft}. We can then define a notion
of {\em open} proposition, with the corresponding notion of open subset, and define a scheme as a type
covered by a finite number of open subsets that are affine schemes. In particular, we define
$\bP^n$ as in \cite{kockreyes} and show that it is a scheme, by the usual covering by $n+1$
open affine subsets.
}

A natural question is if we can show in synthetic algebraic geometry that the automorphism group of $\bP^n$ is  $\PGL_{n+1}(R)$.
More generally, can we show that any map $\bP^n\rightarrow \bP^m$ is given by $m+1$ homogeneous
polynomials of same degree in $n+1$ variables?
From this, it is possible to deduce the corresponding result about $\bP^n$ defined as
a functor of points (but the maps are now {\em natural transformations}) or about $\bP^n$ defined
as a scheme (but the maps are now {\em maps of schemes}).
While this result is a basic fact in the case of projective space over a field, the general
case is more subtle.
(This general result, though fundamental, is not in \cite{Hartshorne} for instance.)
One goal of this paper is to present such a proof in the synthetic setting of \cite{draft}.

Interestingly, though these results can be stated
in the Zariski $1$-topos, the proof makes use of types that
are {\em not} (homotopy) sets (in the sense of \cite{hott}).
The argument proceeds as follows. The first step is to show that any line bundle on $\bP^1$
is trivial on each standard affine chart. The proof here is a reformulation of the standard
argument which follows from Horrocks Theorem \cite{Horrocks,Quillen,lombardi-quitte,Lam}.
One way to generalise this result to $\bP^n$ is to use the technique
of Quillen patching \cite{Quillen,lombardi-quitte,Lam}. 
We present an alternative argument which proceeds in characterizing $\bP^1\rightarrow\KR$
as $\ints\times\KR$, where $\KR$ is the delooping
(thus a type which is not a set) of the multiplicative group of units of $R$. Once we have
the result that a line bundle on $\bP^n$ is trivial on each standard affine chart, we
can deduce by a purely algebraic result (Proposition \ref{units}) that
$\Pic(\bP^n) = \ints$ and $\Aut(\bP^n) = \PGL_{n+1}(R)$ and that
any map $\bP^n\rightarrow \bP^m$ is given by $m+1$ homogeneous
polynomials of same degree in $n+1$ variables. We also provide an alternative purely geometric
proof that $\Pic(\bP^n) = \ints$ which uses our characterisation of $\bP^1\rightarrow \KR$.










\paragraph{Acknowledgements.}
Many thanks to Claude Quitt\'e, who suggested the use of Horrocks' Theorem, and found many
inacurracies in a previous version of this document.
Work on this article was supported by the ForCUTT project, ERC advanced grant 101053291.

\section[Definition of projective space and some linear algebra]{Definition of $\bP^n$ and some linear algebra}
We follow the notations and setting for Synthetic Algebraic Geometry \cite{draft}.
In particular, $R$ denotes the generic local ring and $R^\times$ is the multiplicative group of units of $R$.

In Synthetic Algebraic Geometry, a scheme is defined as a set satisfying some property \cite[Definiton 5.1.1]{draft}. In particular
the projective space $\bP^n$ can be defined to be the quotient of $R^{n+1}\setminus\{0\}$ by the
equivalence relation $a\sim b$ which expresses that $a$ and $b$ are proportional, i.e.\ $a_ib_j=a_jb_i$. This proportionality is a closed proposition which is equal to $\Sigma_{r:R^\times}ar = b$.
We can then prove \cite[Theorem 6.1.5]{draft} that this set is a scheme. This definition goes back to \cite{Kock74}.

 In this setting, a map of schemes is simply an arbitrary set theoretic map. An application of this work is to show
 that the maps $\bP^n\rightarrow \bP^m$ are given by $m+1$ homogeneous polynomials of the same degree in $n+1$ variables.

\medskip

There is another definition of $\bP^n$ which uses ``higher'' notions. Let $\KR= K(R^\times,1)$ be the delooping
of $R^\times$. It can be defined as the type of lines $\Sigma_{M:\Mod{R}}\|{M=R^1}\|$. Over $\KR$ we have the
family of sets
$$E_n(l) = l^{n+1}\setminus\{0\}$$
Note that we use the same notation for an element $l : \KR$,
its underlying $R$-module and its underlying set.
An equivalent definition of $\bP^n$ is then
$$
\bP^n = \sum_{l:\KR}E_n(l)
$$
This is a general construction of the \emph{homotopy} quotient of a type by a group action --
the type in this case is $E(R^1)=R^{n+1}\setminus\{0\}$ and the action of $R^\times=\GL(1,R)=\Omega (\KR,R^1)$ is given by transporting in $E(R^1)$ along a loop $l:\Omega (\KR,R^1)$.
One can calculate, that this transport and therefore the action corresponds to scalar multiplication.
Since this is a free group action, the homotopy quotient, given by the sigma type above, will be a set.
This is part of the homotopy typetheoretic view on group theory and group actions, which is explained in detail in \cite{Sym}.

We will use the following constructions of $\bP^n$ and the identifications between them given below:
\begin{remark}\label{identification-Pn}
  Projective $n$-space $\bP^n$ is given by the following equivalent constructions of which we prefer
  the first in this article:
  \begin{center}
  
    \begin{enumerate}[(i)]
    \item $\sum_{l:\KR}E_n(l)$
    \item The set-quotient $R^{n+1}\setminus\{0\}/R^\times$, where $R^\times$ acts on non-zero vectors in $R^{n+1}$ by multiplication.
    \item For any $k$ and $R$-module $V$ we define the \emph{Grassmannian}
      \[ \Gr(k,V)\colonequiv \{ U\subseteq V \mid \text{$U$ is an $R$-submodule and $\|U=R^k\|$ }\} \rlap{.}\]
      Projective $n$-space is then $\Gr(1,R^{n+1})$.
    \end{enumerate}
    
  \end{center}
  We use the following, well-defined identifications:
  \begin{center}
  
  \begin{enumerate}
  \item[] (i)$\to$(iii): Map $(l,s)$ to $R\cdot (u s_0,\dots, u s_n)$ where $u:l=R^1$
  \item[] (iii)$\to $(i): Map $L\subseteq R^{n+1}$ to $(L, x)$ for a non-zero $x\in L$
  \item[] (ii)$\leftrightarrow$ (iii): A line through a non-zero $x:R^{n+1}$
          is identified with $[x]:R^{n+1}\setminus\{0\}/R^\times$
  \end{enumerate}
    
  \end{center}
\end{remark}

We construct the standard line bundles $\OO(d)$ for all $d\in\Z$,
which are classically known as \emph{Serre's twisting sheaves} on $\bP^n$ as follows:

\begin{definition}
  For $d:\Z$, the line bundle $\OO(d):\bP^n\to \KR$ is given by $\OO(d)(l,s) = l^{\otimes d}$
  and the following definition of $l^{\otimes d}$ by cases:
  \begin{enumerate}[(i)]
  \item $d \geqslant 0$: $l^{\otimes d}$ using the tensor product of $R$-modules
  \item $d < 0$: $(l^{\vee})^{-d}$, where $l^{\vee}\colonequiv\Hom_{\Mod{R}}(l,R^1)$ is the dual of $l$.
  \end{enumerate}
\end{definition}

This definition of $\OO(d)$ agrees with \cite{draft}[Definition 6.3.2] where $\OO(-1)$
is given on $\Gr(1,R^{n+1})$ by mapping submodules of $R^{n+1}$ to $\KR$.
Using the identification of $\bP^n$ from \Cref{identification-Pn} we can give the following explicit equality:

\begin{remark}
  We have a commutative triangle:
  \begin{center}
    \begin{tikzcd}
        \sum_{l:\KR} E_n(l)\ar[rr]\ar[dr,swap,"\OO(1)"] && R^{n+1}\setminus\{0\}/R^\times\ar[ld,"\OO(1)"] \\
                  & \KR &
    \end{tikzcd}
  \end{center}

by the isomorphism given for $(l,s)$ by mapping $x:l$ to $r(u s_0,\dots, u s_n)\mapsto r(u x)$ for some isomorphism $u:l\cong R^1$.
\end{remark}

\medskip

 Connected to this definition of $\bP^n$, we will prove some equalities in the following.
 To prove these equalities, we will make use of the following lemma, which holds in synthetic algebraic geometry:
 
\begin{lemma}\label{invariant-implies-homogenous}
  Let $n,d:\N$ and $\alpha:R^n\to R$ be a map such that
  \[\alpha(\lambda x)=\lambda^d\alpha(x)\]
  then $\alpha$ is a homogenous polynomial of degree $d$.
\end{lemma}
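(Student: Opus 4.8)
The plan is to reduce everything to the second axiom of synthetic algebraic geometry \cite{draft}, namely that for a finitely presented $R$-algebra $A$ the evaluation map $A\to R^{\Sp(A)}$ is a bijection. Since the polynomial ring $R[X_1,\dots,X_n]$ is a finitely presented $R$-algebra, $R^n$ is (canonically) the affine scheme $\Sp(R[X_1,\dots,X_n])$, so there is a unique polynomial $P=\sum_e c_e X^e\in R[X_1,\dots,X_n]$ — the sum ranging over multi-indices $e$, with all but finitely many $c_e$ zero — whose evaluation function is $\alpha$. It then suffices to prove that $c_e=0$ whenever $\lvert e\rvert\neq d$.

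To bring in the hypothesis, I would consider the two maps $R\times R^n\to R$ sending $(t,x)$ to $\alpha(tx)$ and to $t^d\alpha(x)$ respectively; by assumption they are equal. Applying the axiom again, now to the finitely presented algebra $R[T,X_1,\dots,X_n]$ (so that $R\times R^n=\Sp(R[T,X_1,\dots,X_n])$), these two functions are the evaluation functions of equal polynomials. Computing: $\alpha(tx)$ comes from $P(TX_1,\dots,TX_n)=\sum_e c_e T^{\lvert e\rvert}X^e$, while $t^d\alpha(x)$ comes from $T^d P=\sum_e c_e T^d X^e$. Comparing the coefficient of each monomial $X^e$ — which is legitimate since $R[T,X_1,\dots,X_n]$ is free as an $R$-module on the monomials — gives $c_e T^{\lvert e\rvert}=c_e T^d$ in $R[T]$ for every $e$; if $\lvert e\rvert\neq d$, comparing the coefficient of $T^{\lvert e\rvert}$ forces $c_e=0$. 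Hence $P$ is homogeneous of degree $d$, i.e.\ $\alpha$ is a homogeneous polynomial of degree $d$.

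I do not expect a genuine obstacle here: the whole content is the passage to an equality of polynomials, which the bijection $A\cong R^{\Sp(A)}$ makes available, after which the conclusion is coefficient comparison in a free $R$-module (no decidability of equality in $R$ is needed). The only points deserving a line of justification are that $R^n$ and $R\times R^n$ are of the form $\Sp(A)$ for finitely presented $A$ (immediate, since polynomial rings are finitely presented $R$-algebras), and that forming the function $(t,x)\mapsto\alpha(tx)$ agrees with substituting $TX_i$ for $X_i$ in $P$ and then evaluating — that is, functoriality of polynomial evaluation. The degenerate cases $d=0$ and $n=0$ need no separate treatment.
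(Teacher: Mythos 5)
Your proof is close in spirit to the paper's (both hinge on synthetic quasi-coherence and a coefficient comparison in a polynomial ring) but it takes a cleaner, more global route: instead of restricting $\alpha$ to each line $\lambda\mapsto\lambda x$ and comparing coefficients one slice at a time, you apply the duality axiom once to the two-variable family over $R\times R^n$ and compare all coefficients at once. That streamlining is genuine and fine.

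There is, however, one gap you should close. The lemma as used in the paper --- in Propositions~\ref{end}, \ref{const}, and Lemma~\ref{hom} --- has the hypothesis $\alpha(\lambda x)=\lambda^d\alpha(x)$ quantified only over $\lambda:R^\times$, not over all $\lambda:R$; the paper's own proof makes this explicit and devotes a line to upgrading the identity from $R^\times$ to $R$ before comparing coefficients. Your step ``consider the two maps $R\times R^n\to R$ sending $(t,x)$ to $\alpha(tx)$ and to $t^d\alpha(x)$; by assumption they are equal'' is therefore not justified by the hypothesis as actually needed: for $t=0$, say, you have no information. The fix is cheap: work instead over $R^\times\times R^n = \Sp\bigl(R[T,1/T,X_1,\dots,X_n]\bigr)$, where the hypothesis directly gives equality of the two functions, and compare coefficients in the Laurent polynomial ring, which is still free as an $R$-module on the monomials $T^kX^e$, $k\in\Z$. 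With that adjustment the argument goes through exactly as you wrote it.
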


\begin{proof}
  By duality, any map $\alpha:R^n\to R$ is a polynomial.
  To see it is homogenous of degree $d$, let us first note that any $P:R[\lambda]$ with $P(\lambda)=\lambda^d P(1)$
  for all $\lambda:R^\times$ also satisfies this equation for all $\lambda : R$ and is therefore homogenous of degree $d$.
  Then for $\alpha'_x:R[\lambda]$ given by $\alpha'_x(\lambda)\colonequiv \alpha(\lambda\cdot x)$
  we have $\alpha'_x(\lambda)=\lambda^d \alpha'_x(1)$. This means any coeffiecent of $\alpha'_x$
  of degree different from $d$ is 0. Since this means every monomial appearing in $\alpha$,
  which is not of degree $d$, is zero for all $x$ and therefore 0.   
\end{proof}

\begin{proposition}\label{end}
  $$\prod_{l:\KR}l^n\rightarrow l \;\;\;=\;\;\; \Hom(R^n,R)$$
\end{proposition}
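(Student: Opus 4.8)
The plan is to trade the dependent function over $\KR$ for a single $R^\times$-equivariance condition on a map $R^n\to R$, and then to invoke \Cref{invariant-implies-homogenous} with $d=1$. Recall that $\KR$ is pointed by the line $R^1$, is $0$-connected (every line is merely equal to $R^1$), and has loop space $R^\times$ at $R^1$, a unit $\lambda$ corresponding to the module automorphism $m_\lambda\colon R^1\to R^1$, $y\mapsto\lambda y$. So I would first define the comparison map $\Phi\colon\big(\prod_{l:\KR}l^n\to l\big)\to\Hom(R^n,R)$ by evaluation at the basepoint, $\Phi(f)\colonequiv f(R^1)$, and then check that $f(R^1)$, a priori merely a set map $R^n\to R$, actually lies in $\Hom(R^n,R)$. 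Applying $f$ to the loop coming from $\lambda:R^\times$ and computing the transport in the family $l\mapsto(l^n\to l)$ along it turns $\beta\colonequiv f(R^1)$ into $x\mapsto\lambda\,\beta(\lambda^{-1}x)$; hence $\mathrm{apd}_f$ yields $\beta(\lambda x)=\lambda\,\beta(x)$ for all $\lambda:R^\times$, and \Cref{invariant-implies-homogenous} (case $d=1$) shows $\beta$ is a homogeneous polynomial of degree $1$, i.e.\ $R$-linear.

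For the inverse I would send $\alpha\in\Hom(R^n,R)$ to the section $l\mapsto\alpha_l$ defined, given an isomorphism $u\colon l\cong R^1$, by $\alpha_l(x)\colonequiv u^{-1}\big(\alpha(ux_0,\dots,ux_{n-1})\big)$; any two choices of $u$ differ by some $m_\lambda$, and $R$-linearity of $\alpha$ (i.e.\ $\alpha(\lambda y)=\lambda\alpha(y)$) shows $\alpha_l$ is independent of this choice. Since $l^n\to l$ is a set, this weakly constant assignment factors through $\|l=R^1\|$ and gives $\Psi(\alpha):\prod_{l:\KR}l^n\to l$. The identity $\Phi\circ\Psi=\mathrm{id}$ is immediate from the case $u=\mathrm{id}$. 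For $\Psi\circ\Phi=\mathrm{id}$, the statement $\Psi(\Phi(f))(l)=f(l)$ is an equality in a set, hence a proposition in $l$, so by $0$-connectedness of $\KR$ it suffices to check it at $l=R^1$, where $u=\mathrm{id}$ again makes both sides equal to $f(R^1)$. Univalence then turns this equivalence into the asserted equality.

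The step I expect to be delicate is not mathematical but organisational: carrying out the transport computation in the family $l\mapsto(l^n\to l)$ carefully enough — especially the $\lambda$ versus $\lambda^{-1}$ bookkeeping — so that the resulting identity is exactly the hypothesis of \Cref{invariant-implies-homogenous}, and justifying that the inverse map descends along the truncation $\|l=R^1\|$ by weak constancy into a set. With those in place, all the real content is supplied by \Cref{invariant-implies-homogenous}.
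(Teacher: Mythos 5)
Your proof is correct and follows essentially the same route as the paper: both identify the dependent product over $\KR$ with the $R^\times$-equivariant maps $R^n\to R$ and then apply \Cref{invariant-implies-homogenous} with $d=1$. The only difference is that the paper cites the general fact from \cite{Sym} that $\prod_{x:A}T(x)$ is the set of fixed points of $T(a)$ under the $\Omega(A,a)$-action for a pointed connected groupoid, whereas you unfold that fact by hand (defining $\Phi$, $\Psi$, checking the transport and the descent through the truncation).
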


\begin{proof}
We rewrite $\Hom(R^n,R)$, the set of $R$-module morphism, as
$$
\sum_{\alpha:R^n\rightarrow R}\prod_{\lambda:R^\times}\prod_{x:R^n}\alpha(\lambda x) = \lambda \alpha(x)
$$
using \Cref{invariant-implies-homogenous} with $d=1$.

\medskip

It is then a general fact that if we have a pointed connected groupoid $(A,a)$ and a family of
sets $T(x)$ for $x:A$, then $\prod_{x:A}T(x)$ is the set of fixed points of $T(a)$ for the $\Omega(A,a)$-action: If we have $f:\prod_{x:A}T(x)$, then for any $g:\Omega(A,a)$ we have $g(f_a)=f_a$ by depedent application of $f$ to $g$. And if $z:T(a)$ is a fixed point and $x:A$, the transport along a $p:a=x$ is independent of $p$, so we can construct a $z':T(x)$.
\end{proof}

We will use the following remark, proved in \cite{draft}[Remark 6.2.5].

\begin{lemma}\label{ext}
  Any map $R^{n+1}\setminus\{0\}\rightarrow R$ can be uniquely extended to a map $R^{n+1}\rightarrow R$ for $n>0$.
\end{lemma}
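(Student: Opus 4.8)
The plan is to present $R^{n+1}\setminus\{0\}$ as a union of standard affine opens and then reduce the statement to an explicit computation in polynomial rings. This is the synthetic counterpart of the classical fact that $H^0(\A^{n+1}\setminus\{0\},\OO)=R[X_0,\dots,X_n]$ as soon as $n+1\geq 2$, an algebraic form of Hartogs extension; the hypothesis $n>0$ is exactly the codimension-$\geq 2$ condition that makes it work.

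First I would observe that, $R$ being local, $R^{n+1}\setminus\{0\}$ is the union (the colimit of the Čech nerve) of the standard opens $D(X_i)\colonequiv\{x:R^{n+1}\mid x_i\in R^\times\}$ for $i=0,\dots,n$, that each $D(X_i)$ is the affine scheme $\Sp(R[X_0,\dots,X_n][X_i^{-1}])$, and that $D(X_i)\cap D(X_j)=\Sp(R[X_0,\dots,X_n][(X_iX_j)^{-1}])$. Since maps into $R$ glue along open covers (the sheaf property for $R$, available in the setting of \cite{draft}), a map $f\colon R^{n+1}\setminus\{0\}\to R$ is the same datum as a compatible family $f_i\colon D(X_i)\to R$. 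By the duality axiom each $f_i$ is an element of the localization $R[X_0,\dots,X_n][X_i^{-1}]$, which after choosing a common exponent $m$ I may write as $f_i=P_i/X_i^m$ with $P_i:R[X_0,\dots,X_n]$; compatibility then says $P_i/X_i^m=P_j/X_j^m$ in $R[X_0,\dots,X_n][(X_iX_j)^{-1}]$.

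The core of the argument is then purely algebraic. Each $X_i$, hence each $X_iX_j$, is a non-zero-divisor in $R[X_0,\dots,X_n]$ over an arbitrary ring $R$, so the compatibility condition is equivalent to the honest equality $P_iX_j^m=P_jX_i^m$ in $R[X_0,\dots,X_n]$. Writing $P_i=\sum_{k\geq 0}c_kX_i^k$ with $c_k$ a polynomial in the remaining variables, the equation forces $c_kX_j^m=0$ for all $k<m$; this is the one place $n>0$ is used, since we need a second variable $X_j$ with $j\neq i$, which is a non-zero-divisor in the polynomial ring in the remaining variables, whence $c_k=0$ and $X_i^m\mid P_i$. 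Thus each $f_i$ is already a polynomial $P^{(i)}\colonequiv P_i/X_i^m$, and comparing on overlaps (non-zero-divisor property again) shows $P^{(i)}=P^{(j)}$ in $R[X_0,\dots,X_n]$, i.e.\ there is a single polynomial $P$ with $f_i=P|_{D(X_i)}$ for all $i$. The map $P\colon R^{n+1}\to R$ obtained from $P:R[X_0,\dots,X_n]$ via duality then restricts to $f$ on $R^{n+1}\setminus\{0\}$. Uniqueness goes the same way: a polynomial vanishing on $R^{n+1}\setminus\{0\}$ vanishes on $D(X_0)$, so is killed by a power of $X_0$ in $R[X_0,\dots,X_n]$, hence is $0$.

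The main obstacle I expect is not the polynomial bookkeeping but making the first reduction precise: that a function $R^{n+1}\setminus\{0\}\to R$ is literally the limit of the Čech diagram of the cover $\{D(X_i)\}$ — the descent/sheaf property for $R$ along open covers — together with the verification that restriction maps of affine schemes correspond under the duality axiom to the localization homomorphisms. A secondary point requiring care is that the entire computation must be valid over an arbitrary, possibly non-reduced $R$; this is harmless precisely because monomials in $X_0,\dots,X_n$ are non-zero-divisors in $R[X_0,\dots,X_n]$, which is what legitimises the ``coprimality'' step and the clearing of denominators.
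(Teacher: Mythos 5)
The paper does not prove this lemma itself; it cites \cite{draft}[Remark~6.2.5], so there is no ``paper proof'' to compare against line by line. Your argument is correct and is the natural one in this setting: cover $R^{n+1}\setminus\{0\}$ by the basic opens $D(X_i)$, invoke the sheaf property of $R$ and the duality axiom to identify $R$-valued functions on each chart with the localizations $R[X_0,\dots,X_n][X_i^{-1}]$, and then do the elementary ``algebraic Hartogs'' computation showing that the compatibility relations $P_iX_j^m=P_jX_i^m$ force $X_i^m\mid P_i$ once there is a second variable, i.e.\ once $n>0$. The algebraic core is exactly right, and your uniqueness argument via injectivity of the localization map is also fine. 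One small point of precision: you justify $R^{n+1}\setminus\{0\}=\bigcup_i D(X_i)$ by ``$R$ being local,'' but what is actually used is that $R$ is a field in the synthetic sense (for $x:R^{n+1}$, $x\neq 0$ is equivalent to some $x_i$ being a unit), which in \cite{draft} is a consequence of the axioms rather than of locality alone; since the rest of the paper systematically uses this identification (e.g.\ in the charts $U_i$ of $\bP^n$), this is not a gap, just a wording issue worth flagging.
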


We will also use the following proposition, already noticed in \cite{draft}.

\begin{proposition}\label{const}
  Any map from $\bP^n$ to $R$ is constant.
\end{proposition}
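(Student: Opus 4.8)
The plan is to reduce the claim to \Cref{invariant-implies-homogenous} taken with exponent $d = 0$, by transporting a map $f : \bP^n \to R$ to an $R^\times$-invariant map out of $R^{n+1}\setminus\{0\}$ and then extending it via \Cref{ext}. I would first dispose of the case $n = 0$ separately (there $\bP^0$ is contractible, so every map out of it is constant) and assume $n \geq 1$ from now on.

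The first real step is to rewrite the domain. Using the set-quotient description (ii) of \Cref{identification-Pn}, the universal property of the quotient identifies $f : \bP^n \to R$ with a map $g : R^{n+1}\setminus\{0\} \to R$ satisfying $g(\lambda x) = g(x)$ for all $\lambda : R^\times$ and all $x : R^{n+1}\setminus\{0\}$. (One could equivalently stay in description (i) and use, exactly as in the proof of \Cref{end}, that $\prod_{l:\KR}(T_n(l)\to R)$ is the set of fixed points of the $R^\times$-action on $R^{n+1}\setminus\{0\}\to R$; this produces the same $g$.)

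The second step is to extend. By \Cref{ext} --- this is where $n > 0$ is used --- the map $g$ has a unique extension to $P : R^{n+1} \to R$. I then want to upgrade $R^\times$-invariance of $g$ to $R^\times$-invariance of $P$ on all of $R^{n+1}$: for each fixed $\lambda : R^\times$, the maps $x \mapsto P(\lambda x)$ and $x \mapsto P(x)$ both restrict on $R^{n+1}\setminus\{0\}$ to $g$ (here one uses that $\lambda x \neq 0$ whenever $x \neq 0$), hence they are equal by the uniqueness in \Cref{ext}. This gives $P(\lambda x) = P(x) = \lambda^0 P(x)$ for all $\lambda : R^\times$ and all $x : R^{n+1}$, and then \Cref{invariant-implies-homogenous} applied to $P$ with $d = 0$ says that $P$ is a homogeneous polynomial of degree $0$, i.e.\ constant; so $g$ and therefore $f$ is constant.

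I expect the only genuinely non-formal point to be that second step --- passing from invariance on the punctured space to invariance as an identity of functions (equivalently, polynomials) on all of $R^{n+1}$ --- which is precisely what the uniqueness clause of \Cref{ext} is there to supply. Everything else is bookkeeping with the identifications of \Cref{identification-Pn}.
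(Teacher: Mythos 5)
Correct, and it is essentially the same route as the paper's two-line proof: identify maps $\bP^n\to R$ with $R^\times$-invariant maps on $R^{n+1}\setminus\{0\}$, extend by \Cref{ext}, and apply \Cref{invariant-implies-homogenous} with $d=0$. You additionally spell out the uniqueness argument that upgrades invariance from the punctured space to all of $R^{n+1}$, and treat the trivial case $n=0$ separately; the paper leaves both of these implicit.
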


\begin{proof}
  Since $\bP^n$ is a quotient of $R^{n+1}\setminus\{0\}$, the set $\bP^n\rightarrow R$ is
  the set of maps $\alpha:R^{n+1}\setminus\{0\}\rightarrow R$
  such that $\alpha(\lambda x) = \alpha(x)$ for all $\lambda$ in $R^\times$.
  These are exactly the constant maps
  using \Cref{ext} and \Cref{invariant-implies-homogenous} with $d=0$.
\end{proof}

\begin{proposition}\label{aut}
  For all $n:\N$ we have:
$$\prod_{l:\KR}E_n(l)\rightarrow E_n(l) \;\;=\;\; \GL_{n+1}$$
\end{proposition}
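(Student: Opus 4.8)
The plan is to mimic the proof of \Cref{end}, using the same "sections over a pointed connected groupoid are fixed points" principle from \cite{Sym}. Taking $(A,a) = (\KR, R^1)$, the type $\prod_{l:\KR}(T_n(l)\to T_n(l))$ is the set of fixed points of $T_n(R^1)\to T_n(R^1) = (R^{n+1}\setminus\{0\})\to(R^{n+1}\setminus\{0\})$ under the action of the loop type $(R^1 = R^1) = R^\times$, where $\lambda:R^\times$ acts on a self-map $\varphi$ by conjugation: $\varphi\mapsto (x\mapsto \lambda^{-1}\varphi(\lambda x))$. So I must show that the self-maps $\varphi$ of $R^{n+1}\setminus\{0\}$ satisfying $\varphi(\lambda x) = \lambda\varphi(x)$ for all $\lambda:R^\times$ are exactly the maps given by invertible $(n+1)\times(n+1)$ matrices.

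First I would extend such a $\varphi$ to a map $R^{n+1}\to R^{n+1}$: writing $\varphi = (\varphi_0,\dots,\varphi_n)$ in components, each $\varphi_i : R^{n+1}\setminus\{0\}\to R$ extends uniquely to $R^{n+1}\to R$ by \Cref{ext} (using $n+1 > 0$, and here we even allow $n=0$ if \Cref{ext} is read appropriately — otherwise the $n=0$ case, $\GL_1 = R^\times$ acting on $R^\times$, is handled directly). The homogeneity condition $\varphi_i(\lambda x) = \lambda\varphi_i(x)$ for $\lambda:R^\times$ then holds on all of $R^{n+1}$ by uniqueness of the extension, so by \Cref{invariant-implies-homogenous} with $d=1$ each $\varphi_i$ is a homogeneous polynomial of degree $1$, i.e.\ linear. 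Thus $\varphi$ is given by a matrix $M \in R^{(n+1)\times(n+1)}$, and the assignment $\varphi\mapsto M$ is clearly equivariant for the respective $R^\times$-actions (conjugation on self-maps corresponds to conjugation on matrices).

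It remains to identify which matrices $M$ arise, i.e.\ which linear maps restrict to self-maps of $R^{n+1}\setminus\{0\}$ — equivalently, send nonzero vectors to nonzero vectors. The main obstacle is showing this forces $M$ to be invertible (one direction is trivial: invertible matrices certainly preserve $R^{n+1}\setminus\{0\}$). Here I expect to argue that "$Mx \neq 0$ whenever $x \neq 0$" is too weak on its own over a general local ring and that we must exploit the \emph{family} structure: the hypothesis is really a section of the bundle $l\mapsto(T_n(l)\to T_n(l))$, so for every line $l$ — equivalently after every base change $R\to R'$ classified by a point of $\KR$ — the corresponding map is a self-map of $R'^{n+1}\setminus\{0\}$. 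Using that nonzero can be checked after passing to residue fields / localizations (via the local choice axiom, as in the paper's remark about replacing Quillen patching), one shows the polynomial $\det M$ is a unit: if $\det M$ were not a unit it would lie in the maximal ideal, and over the residue field $M$ would be a singular matrix, hence would kill some nonzero vector, contradicting the preservation of nonzero vectors in that fiber. Hence $M\in\GL_{n+1}(R)$.

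Finally, assembling: $\varphi\mapsto M$ is an equivariant bijection between the homogeneous self-maps of $R^{n+1}\setminus\{0\}$ and $\GL_{n+1}$ (with conjugation action on both sides), so passing to fixed points gives $\prod_{l:\KR}(T_n(l)\to T_n(l)) = \GL_{n+1}^{R^\times} = \GL_{n+1}$ — the last equality because the conjugation action of $R^\times$ (scalars) on $\GL_{n+1}$ is trivial, as central scalars commute with every matrix. The step I anticipate needing the most care is the invertibility argument, since it is exactly where the local-ring structure and the descent/local-choice machinery enter, rather than pure multilinear algebra.
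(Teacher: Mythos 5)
The first half of your proposal — using the fixed-point principle from \cite{Sym}, extending each component by \Cref{ext}, and applying \Cref{invariant-implies-homogenous} with $d=1$ to see that the map is linear — matches the paper's first remark. The problem is the second half, the passage from "linear map $M$ sending nonzero vectors to nonzero vectors" to "$M\in\GL_{n+1}$". Here your proposal has a genuine gap, and also a misdiagnosis.

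First, the misdiagnosis: you worry that ``$Mx\neq 0$ whenever $x\neq 0$'' is too weak on its own and that one must further exploit the family structure over $\KR$, speaking of ``base change $R\to R'$ classified by a point of $\KR$''. This is a confusion — points of $\KR$ are rank-one free $R$-modules, not ring extensions, and the family structure has already been fully consumed in extracting linearity. What remains is a pointwise statement about a single matrix over $R$, and the hypothesis \emph{is} strong enough. The crucial input is that $R$ is local (and a ``field'' in the synthetic sense): a vector $v:R^m$ satisfies $v\neq 0$ iff some component $v_i$ is a unit. Second, the gap: your intended argument proceeds by contradiction through a residue field, but the generic local ring $R$ of synthetic algebraic geometry has no residue-field object available to pass to (its maximal ideal of non-units is not a finitely presented quotient you can $\Spec$), and the appeal to ``the local choice axiom'' or the Quillen-patching remark is not relevant here. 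The paper instead gives a direct induction on $m$: since $e_1\neq 0$, $u(e_1)\neq 0$, so some component $\alpha_j$ of $u(e_1)$ is a unit; composing $u$ with suitable elements of $\GL_m$ (column swap, then clearing the first column) reduces to the same statement for an $(m-1)\times(m-1)$ block, and $m=1$ is immediate. That constructive induction, rather than a residue-field detour, is the step your proposal is missing.
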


\begin{proof}
  For $n=0$, this is the direct computation that a Laurent-polynomial $\alpha:(R[X,1/X])^\times$ which satisfies
  $\alpha(\lambda x)=\lambda \alpha(x)$ is $\lambda\alpha(1)$ where $\alpha(1):R^\times=\GL_1$.
  
  \medskip
  
  For $n>0$, the proposition follows from two remarks.

  The first remark is that maps $E_n(R)\to E_n(R)$, which are invariant under the induced $\KR$ action, are linear.
  To prove this remark, we first map from $E_n(l)\to E_n(l)$ to $E_n(l)\to l^{n+1}$ by composing with the inclusion.
  Maps of the latter kind can be uniquely extended to maps $l^{n+1}\to l^{n+1}$, since by 
  \Cref{ext} the restriction map
$$
(l^{n+1}\rightarrow l)\rightarrow ((l^{n+1}\setminus\{0\})\rightarrow l)
$$
is a bijection for $n>0$ and all $l:\KR$.

\medskip

The second remark is that a linear map $u:R^{m}\rightarrow R^{m}$ such that
$$
x\neq 0~\rightarrow~u(x)\neq 0
$$
is exactly an element of $\GL_{m}$.

We show this by induction on $m$. For $m=1$ we have $u(1)\neq 0$ iff $u(1)$ invertible.

For $m>1$, we look at $u(e_1) = \Sigma \alpha_ie_i$ with $e_1,\dots,e_m$ basis of $R^m$.
We have that some $\alpha_j$ is invertible.
By composing $u$ with an element in $\GL_m$, we can then
assume that $u(e_1) = e_1+v_1$ and $u(e_i) = v_i$, for $i>1$, with $v_1,\dots,v_m$ in $Re_2+\dots+Re_m$.
We can then conclude by induction.
\end{proof}

We can generalize \Cref{end}
and get a result related to \Cref{aut} as follows.
 
\begin{lemma}\label{hom}
  \begin{enumerate}[(i)]
    \item
      \[  \prod_{l:\KR}l^n\rightarrow l^{\otimes d} \;\;=\;\; (R[X_1, \dots, X_n])_d \]
      That is,
      every element of the left-hand side is given by
      a unique homogeneous polynomial of degree $d$ in $n$ variables.
    \item
      An element in
      $$\prod_{l:\KR}E_n(l)\rightarrow E_m(l^{\otimes d})$$
      is given by $m+1$ homogeneous polynomials $p = (p_0,\dots,p_m)$ of degree $d$ such that
      $x\neq 0$ implies $p(x)\neq 0$.
  \end{enumerate}
\end{lemma}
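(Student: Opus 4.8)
The plan is to obtain (i) as a mild generalization of the proof of \Cref{end}, keeping track of the extra twist in the codomain, and then to derive (ii) from (i) with the help of \Cref{ext}, in the same way that \Cref{aut} follows from its linearity input.

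For part (i), I would argue exactly as in \Cref{end}. By the general fact about pointed connected groupoids used there, an element of $\prod_{l:\KR}l^n\to l^{\otimes d}$ is the same thing as a fixed point, for the $(R^1=R^1)=R^\times$-action, of the set $R^n\to R$ of maps from the fibre over $R^1$. A loop $\lambda:R^\times$ acts on the domain $l^n$ by componentwise multiplication by $\lambda$ and on the codomain $l^{\otimes d}$ by multiplication by $\lambda^d$, so unwinding the equivariance condition shows that a map $\alpha:R^n\to R$ defines such a fixed point exactly when $\alpha(\lambda x)=\lambda^d\alpha(x)$ for all $\lambda:R^\times$ and $x:R^n$. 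By \Cref{invariant-implies-homogenous} these are precisely the homogeneous polynomials of degree $d$ in $n$ variables, so we get the identification with $(R[X_1,\dots,X_n])_d$. (For $d=1$ this recovers \Cref{end}.)

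For part (ii), first compose with the inclusion $T_m(l^{\otimes d})\hookrightarrow(l^{\otimes d})^{m+1}$. Currying the $(m+1)$-fold product, this exhibits an element of $\prod_{l:\KR}T_n(l)\to T_m(l^{\otimes d})$ as an $(m+1)$-tuple of elements of
$$\prod_{l:\KR}\bigl(l^{n+1}\setminus\{0\}\bigr)\to l^{\otimes d}$$
together with the property that the resulting map into $(l^{\otimes d})^{m+1}$ actually factors through $T_m(l^{\otimes d})$. For $n>0$, \Cref{ext} shows (for all $l:\KR$, since unique extendability is a proposition) that the restriction $\prod_{l:\KR}(l^{n+1}\to l^{\otimes d})\to\prod_{l:\KR}((l^{n+1}\setminus\{0\})\to l^{\otimes d})$ is a bijection, so each of the $m+1$ components extends uniquely to an element of $\prod_{l:\KR}l^{n+1}\to l^{\otimes d}$, which by part (i) (with $n+1$ variables) is a homogeneous polynomial $p_i$ of degree $d$. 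Hence the underlying family is given by a unique tuple $p=(p_0,\dots,p_m)$ of homogeneous polynomials of degree $d$. (The case $n=0$ is not covered by \Cref{ext} and must be handled by a direct computation, as in the $n=0$ case of \Cref{aut}.)

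It remains to identify the extra property. Since every $l:\KR$ is merely equal to $R^1$ and ``the map lands in $T_m(l^{\otimes d})$'' is a proposition, this property holds for all $l$ iff it holds at $l=R^1$, i.e.\ iff $p(x)=(p_0(x),\dots,p_m(x))\neq 0$ for all $x:R^{n+1}\setminus\{0\}$. Conversely, any tuple $p$ of homogeneous degree-$d$ polynomials with this nonvanishing property determines, by part (i) run backwards, a family in $\prod_{l:\KR}(l^{n+1}\to l^{\otimes d})^{m+1}$, which by the same ``merely $R^1$'' argument restricts to an element of $\prod_{l:\KR}T_n(l)\to T_m(l^{\otimes d})$; and these two constructions are visibly mutually inverse. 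The main point to get right is the bookkeeping in part (i) — that the twist in the codomain contributes exactly the factor $\lambda^d$, so that \Cref{invariant-implies-homogenous} applies with the correct degree — together with the harmless but necessary observation that refining the codomain from $(l^{\otimes d})^{m+1}$ to $T_m(l^{\otimes d})$ is a condition that need only be checked over $R^1$.
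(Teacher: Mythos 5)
Your proof of part~(i) is essentially the paper's own: both appeal to the general principle that for a pointed connected groupoid and a family of sets, the dependent product is the set of fixed points for the loop-space action, compute that action on $R^n \to R^{\otimes d}$ as $\alpha \mapsto r^d\alpha(r^{-1}x)$, unwind the fixed-point condition to $\alpha(\lambda x) = \lambda^d\alpha(x)$, and invoke \Cref{invariant-implies-homogenous}. The only cosmetic difference is that you derive the equivariance condition explicitly while the paper states the action directly.

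For part~(ii), the paper's proof text stops after establishing~(i) (``We show the first item.'') and never spells out~(ii), though the proof of \Cref{map} later cites the pattern. Your proof fills in this gap soundly: composing with the inclusion $T_m(l^{\otimes d}) \hookrightarrow (l^{\otimes d})^{m+1}$, extending componentwise via \Cref{ext} (for $n>0$), applying~(i) with $n+1$ variables to identify each component as a homogeneous polynomial, and then observing that the nonvanishing condition is a fiberwise proposition over the connected type $\KR$, hence may be checked at $l = R^1$. This mirrors the two-step structure already used in \Cref{aut}, and your remark that $n=0$ requires a separate direct computation (again as in \Cref{aut}) is a correct and worthwhile caveat. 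In short: part~(i) matches the paper; part~(ii) is a reasonable and correct completion of an argument the paper only gestures at.
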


\begin{proof}
We show the first item. Following \cite{Sym} again, this product is the set of maps $\alpha:R^n\rightarrow R^{\otimes d}$
which are invariant by the $R^\times$-action which in this case acts by mapping $\alpha$ to $r^d\alpha(r^{-1} x)$ for each $r:R^\times$.
So by \Cref{invariant-implies-homogenous} these are exactly the maps given by homogeneous polynomials of degree $d$.
\end{proof}


\section{Line bundles on affine schemes}
A line bundle on a type $X$ is a map $X\rightarrow \KR$.


\medskip

 A line bundle $L$ on $\Spec(A)$ will define a f.p. $A$-module $\prod_{x:\Spec(A)}L(x)$ \cite{draft}.
It is presented by a matrix $P$.
Since this f.p. module is locally free, we can find $Q$ such that $PQP = P$ and
$QPQ = Q$ \cite{lombardi-quitte}. We then have $Im(P) = Im(PQ)$ and this is a projective module of rank $1$. We can then assume $P$ square matrix and
$P^2 = P$ and the matrix $I-P$ can  be seen as listing the generators of this module.

If $M$ is a matrix we write $\Delta_l(M)$ for the ideal generated by the $l\times l$ minors of
$M$. We have $\Delta_1(I-P) = 1$ and $\Delta_2(I-P) = 0$, since this projective module is of rank $1$.

The module is free exactly if we can find a column vector $x$ and a line vector $y$ such that
$xy = I-P$. We then have $yx = 1$, since if $r = yx$ we have $I-P = xyxy = rxy = r(I-P)$ and
hence $r = 1$ since $\Delta_1(I-P) = 1$.

\medskip

The line bundle on $\Spec(A)$ is trivial on $D(f)$ if, and only if, the module $M\otimes A[1/f][X]$ is free, which
is equivalent to the fact that we can find $X$ and $Y$ such that $YX = (f^N)$ and $XY = f^N(I-P)$ for some $N$.

In Appendix 1, we prove the following special case of Horrocks' Theorem.

\begin{lemma}
  If $A$ is a commutative ring
  then any ideal of $A[X]$ divides a principal ideal $(f)$, with $f$ monic, is itself a principal ideal.
\end{lemma}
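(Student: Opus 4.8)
The plan is to prove this directly, without invoking the full force of Horrocks' theorem: an ideal is a rank-one object, and the extra hypothesis of dividing a \emph{monic} principal ideal is exactly enough to pin down a generator by a dimension count. Write $n := \deg f$ and fix an ideal $J$ with $IJ = (f)$. Since $f$ is monic it is a non-zero-divisor of $A[X]$ and $A[X]/(f)$ is a free $A$-module of rank $n$. Picking a relation $f = \sum_{k=1}^{r} i_k j_k$ with $i_k \in I$, $j_k \in J$, every $i \in I$ satisfies $if = \sum_k i_k (i j_k)$ with each $i j_k \in IJ \subseteq (f)$, so $i j_k = f c_k$ and, cancelling $f$, $i \in (i_1, \dots, i_r)$; hence $I$ is finitely generated, and symmetrically so is $J$, and in particular $f \in I$. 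I would then reduce to the case that $A$ is reduced: granting the statement over $A_{\mathrm{red}}$ (applied to $I_{\mathrm{red}}$, $J_{\mathrm{red}}$, whose product is still the ideal generated by the monic $f$), the image $I_{\mathrm{red}} \subseteq A_{\mathrm{red}}[X]$ is principal, say $(\bar g)$; lift $\bar g$ to some $g \in I$; then $I/(g)$ is a finitely generated $A[X]$-module with $(I/(g)) \otimes_{A[X]} A_{\mathrm{red}}[X] = I_{\mathrm{red}}/(\bar g) = 0$, and since $\mathrm{nil}(A)[X]$ consists of nilpotent elements of $A[X]$, hence lies in its Jacobson radical, Nakayama's lemma gives $I/(g) = 0$, i.e.\ $I = (g)$. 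So assume from now on that $A$ is reduced.

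Set $M := A[X]/I$ and $N := A[X]/J$. Since $A[X]/(f)$ is $A$-free of rank $n$ and $I/(f)$, $J/(f)$ are finitely generated, $M$ and $N$ are finitely presented $A$-modules. The crucial observation concerns their fibres: for a prime $\mathfrak p \subseteq A$, base change to the residue field gives $M \otimes_A \kappa(\mathfrak p) = \kappa(\mathfrak p)[X]/\mathfrak a$ and $N \otimes_A \kappa(\mathfrak p) = \kappa(\mathfrak p)[X]/\mathfrak b$, where $\mathfrak a$, $\mathfrak b$ are the ideals generated by the images of $I$, $J$; as $\kappa(\mathfrak p)[X]$ is a principal ideal domain, $\mathfrak a = (\bar g)$ and $\mathfrak b = (\bar h)$ with $\mathfrak a\mathfrak b = (f \bmod \mathfrak p)$, and $f \bmod \mathfrak p$ is monic of degree $n$, so $\deg \bar g + \deg \bar h = n$. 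Therefore the fibre-dimension functions $d(\mathfrak p) := \dim_{\kappa(\mathfrak p)} M \otimes \kappa(\mathfrak p)$ and $e(\mathfrak p) := \dim_{\kappa(\mathfrak p)} N \otimes \kappa(\mathfrak p)$ satisfy $d + e \equiv n$. Now the fibre dimension of a finitely generated module is always upper semicontinuous on $\mathrm{Spec}(A)$ (it is $\geqslant k$ precisely on $V(\mathrm{Fitt}_{k-1}(M))$); applied to both $M$ and $N$ (whose fibre dimension is $n$ minus that of $M$), this forces $d$ to be lower semicontinuous as well, hence locally constant, so $\mathrm{Spec}(A)$ decomposes into finitely many clopen pieces on which $d$ is a constant (between $0$ and $n$).

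Fix such a piece, with $d$ the constant fibre dimension of $M$ there. Then $\mathrm{Fitt}_{d-1}(M)$ lies in every prime of $A$, hence in $\mathrm{nil}(A) = 0$ by reducedness; together with the fibre dimension being constantly $d$, this forces $M$ to be locally free of rank $d$, hence (being finitely presented) projective of rank $d$. Multiplication by $X$ is now an $A$-linear endomorphism $\mu_X$ of the rank-$d$ projective module $M$, so its characteristic polynomial $\chi(X) := \det(X \cdot \mathrm{id}_M - \mu_X)$ is monic of degree $d$, and by Cayley--Hamilton $\chi(\mu_X) = 0$, i.e.\ $\chi(X) \in \mathrm{Ann}_{A[X]}(A[X]/I) = I$. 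Since $\chi$ is monic of degree $d$, the ring $A[X]/(\chi(X))$ is $A$-free of rank $d$, and the canonical surjection $A[X]/(\chi(X)) \twoheadrightarrow A[X]/I = M$ is a surjection of finite projective $A$-modules of equal rank, hence an isomorphism; thus $I = (\chi(X))$. Reassembling over the finitely many pieces yields a single generator.

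The delicate step — and the reason this is a genuine, if modest, instance of Horrocks' theorem — is making $M$ projective over $A$. Knowing that $I$ is locally principal over $A[X]$ provides no local generator of $I$ Zariski-locally over $\mathrm{Spec}(A)$, and reconciling these two is precisely the business of Quillen patching and Horrocks' theorem; here the only available leverage is the identity $d + e \equiv n$ coming from $IJ = (f)$ with $f$ monic, which squeezes the otherwise merely semicontinuous rank functions into local constancy, and, combined with the reduction to reduced rings, this is what lets the elementary Cayley--Hamilton argument conclude. A less self-contained alternative is to note that $IJ = (f)$ forces $I[f^{-1}] = A[X][f^{-1}]$, invoke the global form of Horrocks' theorem to see that $I$ is extended from $A$, and then descend freeness along the faithfully flat map $A \to A[X][f^{-1}]$.
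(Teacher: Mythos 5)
Your proof is essentially correct as classical commutative algebra but takes a genuinely different route from the paper's, and the difference matters here. The paper's argument (formal gcd computation over a binary tree of localizations/quotients, comaximal monoids, Lam's degree-reduction, local patching of monic generators) is deliberately \emph{constructive} in the sense of Lombardi--Quitt\'e, precisely so that the lemma can be applied internally to the generic local ring $R$ in the next section. Your argument, by contrast, quantifies over prime ideals of $A$, passes to residue fields $\kappa(\mathfrak p)$, and invokes semicontinuity of fibre dimension on $\mathrm{Spec}(A)$; these are classical moves that do not internalize to the Zariski higher topos without further work (though the Fitting-ideal bookkeeping underlying them is suggestive of how one might try). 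So even granting correctness, this proof would not directly serve the paper's purpose. That said, the idea of squeezing the two upper-semicontinuous fibre-rank functions $d$ and $e$ against the constraint $d+e\equiv n$ to obtain local constancy, and then producing the generator by Cayley--Hamilton applied to $\mu_X$ on the projective module $A[X]/I$, is a clean alternative to the Lam-style elimination, and the final surjection-of-equal-rank-projectives step neatly replaces the paper's Nakayama patching.

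There is one genuine gap, in the reduction to the reduced case. You lift a generator $\bar g$ of $I_{\mathrm{red}}$ to $g\in I$ and assert $(I/(g))\otimes_{A[X]}A_{\mathrm{red}}[X]=I_{\mathrm{red}}/(\bar g)$. Unwinding, the left side is $I/\bigl(\mathfrak n[X]\,I+(g)\bigr)$ while the right side is $I/\bigl((I\cap\mathfrak n[X])+(g)\bigr)$, where $\mathfrak n=\mathrm{nil}(A)$; these agree only if $I\cap\mathfrak n[X]\subseteq\mathfrak n[X]\,I+(g)$, which is not automatic (the discrepancy is exactly $\mathrm{Tor}_1^{A[X]}\bigl(A[X]/I,\,A_{\mathrm{red}}[X]\bigr)=(I\cap\mathfrak n[X])/\mathfrak n[X]\,I$). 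The gap is fillable: since $IJ=(f)$ with $f$ a non-zero-divisor, $I$ is an invertible fractional ideal, hence $A[X]$-flat, hence that Tor vanishes and $I\cap\mathfrak n[X]=\mathfrak n[X]\,I$. But as written the step is unjustified, and the justification is precisely the ``rank-one object'' fact you allude to only in passing. You should make the flatness/invertibility of $I$ explicit before running Nakayama. (Separately, note that the patched generator you end with need not be monic when $A$ is disconnected --- which is fine for the statement, but worth flagging, as the paper does in its footnote.)
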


We can then apply this result in Synthetic Algebraic Geometry for the ring $R$.

\begin{proposition}
  If we have $L:\A^1\rightarrow \KR$ which is trivial on some $D(f)$ where $f$ in $R[X]$ is monic
  then $L$ is trivial on $\A^1$.
\end{proposition}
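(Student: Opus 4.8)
The plan is to translate the statement into commutative algebra over $R[X]$ and reduce it to the Horrocks lemma stated just above. By the discussion opening this section, the line bundle $L$ on $\A^1=\Spec(R[X])$ corresponds to a finitely presented projective $R[X]$-module $M$ of rank $1$, presented as $M=\mathrm{Im}(E)\subseteq R[X]^k$ for an idempotent matrix $E=I-P$ (so $P^2=P$) with $\Delta_1(E)=(1)$ and $\Delta_2(E)=0$; here ``$L$ is trivial on $\A^1$'' means exactly ``$M$ is free''. Triviality of $L$ on $D(f)$ provides, again as recalled above, a column vector $X\in R[X]^k$, a row vector $Y$, and an $N$ with $YX=f^N$ and $XY=f^NE$. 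The first thing I would note is that, $f$ being monic, $f$ is a non-zero-divisor in $R[X]$ (compare leading coefficients), so from $XYX=f^NX$ and $f^NEX=XYX$ we obtain $EX=X$; in particular $X\in M$.

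Next I would realise $M$ as an ideal of $R[X]$ that \emph{divides} a monic principal ideal. Consider $\phi\colon M\to R[X]$, $v\mapsto Yv$. After inverting $f$ one has $M[1/f]=R[X][1/f]\cdot X$ (since $E=f^{-N}XY$ there), on which $\phi$ acts by $bX\mapsto bf^N$ and is thus injective; as $M\subseteq R[X]^k$ is $f$-torsion-free it embeds into $M[1/f]$, so $\phi$ itself is injective and $M\cong J\colonequiv\mathrm{Im}(\phi)$, the ideal generated by the entries of the row vector $YE$. Let $K$ be the ideal generated by the entries of $X$. Then a short computation using $XY=f^NE$, $E^2=E$ and $EX=X$ gives $X_i(YE)_j=\sum_m (X_iY_m)E_{mj}=f^N\sum_m E_{im}E_{mj}=f^NE_{ij}\in(f^N)$ and $\sum_j (YE)_jX_j=YEX=YX=f^N$, whence $JK=(f^N)$. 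So the ideal $J$ divides the principal ideal $(f^N)$, whose generator $f^N$ is again monic.

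Finally I would invoke the Horrocks lemma: $J$ is then principal, say $J=(g)$. Since $M\cong J$ is projective of rank $1$ it is faithful (any $b$ with $bM=0$ annihilates every entry of $E$, hence annihilates $\Delta_1(E)=(1)$), so $g$ is a non-zero-divisor and multiplication by $g$ gives an isomorphism $R[X]\cong(g)$; therefore $M\cong R[X]$ is free and $L$ is trivial on $\A^1$. I expect the main obstacle to be the middle step: massaging the raw ``trivial on $D(f)$'' data into an honest ideal of $R[X]$ that divides --- not merely contains --- a monic principal ideal, which is precisely the hypothesis the Horrocks lemma consumes. The point to watch is that monicity of $f$ is used twice, first to make $f$ a non-zero-divisor (which is what lets us regard $X$ as an element of $M$ and $\phi$ as an embedding) and then inside the lemma itself; everything after the reduction is the cited lemma together with faithfulness of rank-one projective modules.
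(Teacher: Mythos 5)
Your proof is correct and follows the route the paper intends: reduce to the Horrocks lemma by exhibiting the rank-one module as an ideal that divides a monic principal ideal. The paper itself offers no proof beyond the phrase ``we can then apply this result,'' so the translation you supply --- extracting from $YX=f^N$, $XY=f^NE$ the facts $EX=X$, the embedding $\phi\colon M\hookrightarrow R[X]$ with image $J=(\text{entries of }YE)$, and the explicit complement $K$ with $JK=(f^N)$ --- is exactly the content being elided. Two small points worth noting: you invoke the Horrocks lemma for $(f^N)$ rather than $(f)$, which is fine since $f^N$ is again monic but is a slight generalisation of the literal statement; and your closing observation that rank-one projectives are faithful (via $\Delta_1(E)=(1)$) is needed to pass from ``$J$ principal'' to ``$M$ free,'' a step a less careful reader might gloss over.
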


\begin{corollary}\label{c1}
  If we have $L:\bP^1\rightarrow \KR$ then we have
  $$\|{\prod_{r:R}L([1:r]) = L([1:0])}\|\,\,\,\,\,\,\,\,\,\,\,\,\,\|{\prod_{r:R}L([r:1]) = L([0:1])}\|$$
\end{corollary}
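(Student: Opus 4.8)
The plan is to reduce to the two standard affine charts of $\bP^1$. Write $U_0 = \{[1:r] \mid r:R\}$ and $U_1 = \{[r:1] \mid r:R\}$, each identified with $\A^1 = \Spec R[X]$, so that $U_0 \cap U_1 = \{[1:r] \mid r:R^\times\}$ is identified with $\Spec R[X,X^{-1}]$; it is the distinguished open $D(X) \subseteq U_0$, and, via $X \mapsto X^{-1}$, likewise the distinguished open of $U_1$. The two displayed statements are exactly the assertions that $L|_{U_0}$ and $L|_{U_1}$ are trivial line bundles: since $U_0 \cong \A^1$ is connected and pointed by $[1:0]$, a line bundle on it is merely trivial iff it is merely constant, iff it is merely constant at the fibre $L([1:0])$, and unfolding the last condition gives $\|\prod_{r:R} L([1:r]) = L([1:0])\|$, and symmetrically for $U_1$. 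So it suffices to show that $L$ restricts to a trivial line bundle on $U_0$ and on $U_1$.

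Here the preceding proposition does the work once triviality on the overlap is known: the coordinate $X$ of $U_0 \cong \A^1$ is monic of degree $1$ with $D(X) = U_0 \cap U_1$, so triviality of $L|_{U_0}$ on $U_0 \cap U_1$ forces, by that proposition, triviality of $L|_{U_0}$ on all of $U_0$, and symmetrically for $U_1$. Thus the whole statement reduces to the single claim that $L$ restricted to $U_0 \cap U_1 = \Spec R[X, X^{-1}]$ is trivial. To prove this I would argue with the associated modules, as at the start of this section: let $M_0 = \prod_{x:U_0} L(x)$, $M_1 = \prod_{x:U_1} L(x)$ and $M = \prod_{x:U_0 \cap U_1} L(x)$, which are finitely presented projective rank-$1$ modules over $R[X]$, $R[X^{-1}]$ and $R[X, X^{-1}]$ with $M = M_0 \otimes_{R[X]} R[X, X^{-1}] = M_1 \otimes_{R[X^{-1}]} R[X, X^{-1}]$; the goal is that $M$ be free. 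Realising $M_0$ as an invertible ideal $J \subseteq R[X]$ (cf.\ the discussion at the start of this section), one is reduced, by the Horrocks lemma above, to exhibiting a monic polynomial in some ideal in the isomorphism class of $M_0$. This is where the second chart is used: the gluing isomorphism over $R[X, X^{-1}]$ relates $J$ to an invertible ideal of $R[X^{-1}]$, and tracking it through, after clearing denominators and powers of $X$, should produce the required monic polynomial.

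The step I expect to be the main obstacle is precisely this last one — producing the monic polynomial, i.e.\ showing that a line bundle on $\bP^1$ which extends over both standard charts has, in effect, finite degree. This is the synthetic counterpart of the classical identification $\mathrm{Pic}(\bP^1_R) \cong \mathrm{Pic}(R) \times \Z$, carried out here through the Horrocks lemma rather than sheaf cohomology and using that $R$ is local (so that $\mathrm{Pic}(R) = 0$). Everything else — the reduction along the standard cover, the passage to modules and invertible ideals, the connectedness argument turning "trivial" into "constant at $L([1:0])$", and the appeal to the preceding proposition — is routine bookkeeping.
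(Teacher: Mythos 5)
The scaffolding in your proposal is right — reduce to the two standard charts, interpret the displayed statements as triviality of $L$ restricted to each chart, and feed the preceding Horrocks-derived proposition a monic polynomial. But you have identified the gap yourself without closing it, and the way you plan to close it ("tracking the gluing isomorphism through to produce the required monic polynomial") does not work as stated: nothing about the mere existence of $L$ on $\bP^1$ hands you, by algebraic manipulation of the gluing data, a monic $f$ with $L$ trivial on $D(f)$. You are also aiming at a stronger intermediate claim than necessary (triviality on all of $U_0 \cap U_1$), when the proposition only asks for triviality on $D(f)$ for \emph{some} monic $f$.

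What the paper does at this point is use the axiom you have not invoked: \emph{Zariski local choice}. This is the genuinely synthetic input, and it gives that $L$ is trivial on a neighbourhood of the point $[1{:}0]$ in the chart $U_0 = \Spec R[X]$, i.e.\ on $D(g)$ for some $g : R[X]$ with $g(0) \neq 0$. Now comes the small trick that produces the monic polynomial: transport $D(g)$ to the other chart $U_1 = \Spec R[Y]$ via $Y = 1/X$. Writing $g = a_0 + a_1 X + \dots + a_n X^n$ with $a_0 = g(0)$ a unit (by locality of $R$), the image of $D(g)$ is $D(f)$ where $f = g(0)^{-1}\, g(1/Y)\, Y^n$ is monic. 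Then the Horrocks proposition applies on the chart $U_1$ to give triviality there, and the symmetric argument gives the other chart. So the missing idea is not further module-theoretic bookkeeping but the use of local choice combined with the observation that a neighbourhood of the origin in one chart becomes the complement of a monic hypersurface in the other.
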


\begin{proof}
  We have the two open injections $i_0:\A^1\rightarrow \bP^1,~r\mapsto (r:1)$ and
  $i_1:\A^1\rightarrow \bP^1,~r\mapsto (1:r)$.
  By Zariski local choice \cite{draft}, the line bundle $L\circ i_0$ is locally trivial on $\A^1 = \Spec(R[X])$.
  In particular, there exists $g = a_0 + a_1 X + \dots + a_nX^n$ in $R[X]$ such that $0$ is in $D(g)$, i.e. $a_0\neq 0$
  and $L\circ i_0$ is trivial on $D(g)$. Let $f$ be $X(X^n + a_1X^{n-1}+\dots + a_n)$. We have $i_1(D(f))\subseteq D(g)$
  and hence $L\circ i_1$ is trivial on $D(f)$. Since $f$ is monic, $L\circ i_1$ is trivial by the previous Proposition.
  Similarly $L\circ i_0$ is trivial.
\end{proof}

\section{Application of the Veronese embedding}
For given $d>0$, we introduce the Veronese map $V: \bP^n\rightarrow \bP^N$ with $N = \binom{n+d}{n}-1$.
We write an element of $\bP^N$ as a sequence of elements $z_{i_0,\dots,i_n}$ in $R$, not all $0$,
indexed by $i_0,\dots,i_n$ such that $d = i_0+\dots+i_n$
It is defined by $V(x0:\dots:x_n)$ to be the elements $z_{i_0,\dots,i_n} = x_0^{i_0}\dots x_n^{i_n}$. It is well-defined
since we have $x_i^d\neq 0$ for some $i$.

\begin{proposition}\label{veronese}
  $V$ is a bijection between $\bP^n$ and the closed subset $V(n,d)$ of $\bP^N$ determined by the quadratic
  equations $z_Iz_J = z_Kz_L$ for $I+J=K+L$.
\end{proposition}

\begin{proof}
  We have $\bP^N$ covered by the open $U_0,\dots,U_n$ with $U_l$ set of $z_I$ with $z_I\neq 0$
  for some $I = i_0,\dots,i_n$ with $i_l>0$. On $U_l$ we define the map $g_l(z_I) = (x_0:\dots:x_n)$
  with $x_l = z_I$ and $x_k = z_{J_k}$ with $J_k = j_0,\dots,j_n$ and $j_l = i_l-1$ and $j_k = i_k+1$
  and $j_p = i_p$ for $p\neq l,k$. Using the quadratic relations, we see that the map $g_l$
  are compatible and define an inverse of the Veronese map.
\end{proof}

\begin{corollary}\label{affine}
  Let $q(x_0,\dots,x_n)$ be a homogeneous polynomial of degree $d>0$ (the polynomial $q$ might be $0$). Then $S_q$ the subset
  of $\bP^n$ of $(x_0:\dots:x_n)$ such that $q(x_0,\dots,x_n)\neq 0$
  defines an {\em affine} subset of $\bP^n$.
\end{corollary}

\begin{proof}
  To simplify the notation, we present the argument in the case $n=1$ and $d=2$, but the same can be done
  in general. We have $q = ax_0^2 + bx_0x_1+cx_1^2$. The Veronese embedding is $(x_0:x_1)\mapsto (x_0^2:x_0x_1:x_2^2)$
  and the subset $q\neq 0$ of $\bP^1$ is in bijection with the subset $ay_0+by_1+cy_2\neq 0, y_0y_2 = y_1^2$ of $\bP^2$.
  This subset is itself in bijection with the {\em affine} subset $ay_0+by_1+cy_2 = 1, ~y_0y_2=y_1^2$ of $R^3$.
\end{proof}

Note that we have the following description of $R^{S_q}$.

 \begin{lemma}
   $R^{S_q}$ is  $R[X_0,\dots,X_n][1/q]_0$.
\end{lemma}

 \begin{proof}
   Let $T_q$ be the set of $x$ in $R^{n+1}$ such that $q(x) \neq 0$. We can see $S_q$ as the quotient of $T_q$
   by the relation $x = sy$ for some unit $s$. So $R^{S_q}$ is the subset of elements $u$ in $R^{T_q} = R[X_0,\dots,X_n][1/q] = A$
   such that $u(sx) = u(x)$ for $s$ unit. Let $d$ be the formal degree of $q$, and let us write $u$ as $\Sigma a_i/q^l$ with
   $a_i$ homogeneous component of degree $i$. We have $s\neq 0\rightarrow \Sigma_{i\neq dl} a_is^i/s^{dl}q^l = 0$ and hence
   $a_i = 0$ in $A$ if $i\neq dl$.
 \end{proof}
 
 \begin{corollary}
   $R[X_0,\dots,X_n][1/q]_0$ is a finitely presented $R$-algebra.
 \end{corollary}

\section[Picard group of projective space]{Picard group of $\bP^1$}





 \begin{lemma}\label{stand}
   Let $A$ be a \emph{connected}\footnote{If $e(1-e) = 0$ then $e=0$ or $e=1$.} ring, then
   an invertible element of $A[X,1/X]$ can be written $X^N\Sigma a_nX^n$ with $N$ in $\Z$
   and $a_0$ unit and $a_n$ nilpotent if $n\neq 0$.
 \end{lemma}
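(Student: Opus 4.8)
The plan is to reduce the statement, in two steps, to a bare polynomial identity, and the first observation is that it suffices to treat the \emph{reduced} case. Let $\mathrm{nil}(A)\subseteq A$ be the ideal of nilpotent elements. Idempotents lift along the nil ideal $\mathrm{nil}(A)$, so $A_{\mathrm{red}}\colonequiv A/\mathrm{nil}(A)$ is again connected; moreover a coefficient of a Laurent polynomial is nilpotent exactly when its image in $A_{\mathrm{red}}$ vanishes, and is a unit of $A$ exactly when its image in $A_{\mathrm{red}}$ is a unit (if $ab=1$ modulo $\mathrm{nil}(A)$ then $ab$ is a unit, hence so is $a$). So it is enough to show that for $A$ reduced and connected every unit of $A[X,1/X]$ equals $uX^N$ with $u\in A^\times$ and $N\in\Z$; pulling this back along $A\to A_{\mathrm{red}}$ then yields exactly the asserted normal form over the original $A$.

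Second, given a unit $f$ of $A[X,1/X]$ with $A$ reduced and connected, clear denominators: $f=X^{-s}h$ and $f^{-1}=X^{-r}k$ with $h,k\in A[X]$ and $s,r\geqslant 0$, so that $hk=X^m$ in $A[X]$ for $m=s+r$. I would prove by induction on $m$ that $hk=X^m$ forces $h=uX^p$ and $k=u^{-1}X^{m-p}$ with $u\in A^\times$ and $0\leqslant p\leqslant m$; the proposition follows with $N=p-s$. For $m=0$ this is the description of units of a polynomial ring — $g\in A[X]^\times$ iff $g(0)\in A^\times$ and all higher coefficients of $g$ are nilpotent, proved by multiplying the coefficient relations of $gg^{-1}=1$ successively by powers of the top coefficient of $g$ — which for reduced $A$ reads $A[X]^\times=A^\times$. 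For the step $m\geqslant 1$, the constant-term relation gives $h(0)k(0)=0$, and from the low-degree coefficient relations of $hk=X^m$ one extracts an idempotent $e\in A$ witnessing the dichotomy ``$h(0)$ is a unit'' versus ``$h(0)=0$'' (for $\deg h,\deg k\leqslant 1$ one can take $e=h(0)k_1$, where $h_1,k_1$ are the linear coefficients: from $h(0)k_1+h_1k(0)=1$ one gets $h(0)^2k_1=h(0)$ using $h(0)k(0)=0$, whence $e^2=e$). Connectedness of $A$ forces $e\in\{0,1\}$: if $e=1$ then $h(0)\in A^\times$, and comparing lowest-degree coefficients in $h\cdot h^{-1}=1$ (computed in $A[X,1/X]$) shows $h^{-1}$ has no negative-degree terms, so $h\in A[X]^\times$, hence $h=h(0)$ by the case $m=0$ and $k=h(0)^{-1}X^m$; if $e=0$ then $h(0)=0$, so $h=Xh'$, and cancelling the non-zero-divisor $X$ from $hk=X^m$ gives $h'k=X^{m-1}$, to which the induction hypothesis applies.

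The step I expect to be the real work is this inductive step carried out \emph{uniformly in $\deg h$ and $\deg k$, and constructively}: pinning down the separating idempotent $e$ from the coefficient equations of $hk=X^m$ for general $h,k$ needs some care, since one is in effect reconstructing by hand the ``lowest exponent'' function on $\Spec A$, which is locally constant and hence, by connectedness (equivalently, by triviality of idempotents), constant. If classical reasoning is allowed, one can bypass Steps 1--2: writing $f=\sum_i c_iX^i$, for each prime ideal $P$ the image of $f$ in the domain $A/P$ is a monomial $\bar uX^{N(P)}$ (units of $D[X,1/X]$ for a domain $D$ are $uX^N$, by comparing top and bottom degrees); the function $P\mapsto N(P)$ is constant on the basic open $D(c_{N(P)})$, hence locally constant, hence constant with some value $N$ since $\Spec A$ is connected; and then $c_N$ lies in no prime ideal, so is a unit, while every other $c_i$ lies in every prime ideal, so is nilpotent — which is precisely the claimed normal form.
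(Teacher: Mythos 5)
Your outline --- pass to $A_{\mathrm{red}}$, clear denominators to get $hk = X^m$ in $A[X]$, then induct on $m$ by producing an idempotent that decides whether $h(0)$ is a unit or zero --- is a sound strategy, and the two flanking reductions are correct. But the inductive step has a genuine gap. The explicit separating idempotent $e = h_0k_1$ is written down only for $\deg h,\deg k\leqslant 1$; for general degrees what you need is that the elements $f_p \colonequiv h_pk_{m-p}$ (for $p=0,\dots,m$) form a system of orthogonal idempotents summing to $1$, which amounts to the lemma that $h_ik_j$ is nilpotent whenever $i+j\neq m$. That is a McCoy/Dedekind--Mertens-type statement and is precisely the nontrivial constructive content; you flag it as ``the real work'' but do not supply it, and without it the induction does not go through. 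The classical paragraph at the end is substantively the right idea, but it reasons with the set of primes of $A$ and with locally constant functions on a connected topological space, which is not admissible in the constructive setting the paper operates in.

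The paper's own proof resolves exactly this point by a different device: rather than chasing coefficients, it works in the constructible spectrum $B(A)$ (the Boolean algebra generated by the Zariski lattice, as in Lombardi--Quitt\'e). There the integral-domain argument is valid verbatim and yields $\sup_i D(a_i)=1$ and $D(a_ia_j)=0$ for $i\neq j$; so the $D(a_i)$ form a finite partition of unity by complemented elements, i.e.\ a system of orthogonal idempotents, and connectedness of $A$ forces exactly one $D(a_i)$ to be $1$ --- giving that exactly one $a_i$ is a unit and the rest are nilpotent. This is exactly the idempotent your argument was reaching for, obtained uniformly in the degrees and without the unproven coefficient lemma. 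If you want to complete your version, the missing piece is a constructive proof that $h_ik_j$ is nilpotent for $i+j< m$ given $hk=X^m$; alternatively, replace the hand-built idempotent with the constructible-spectrum reasoning, which is the route the paper takes.
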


 \begin{proof}
   Let $P = \sum_i a_iX^i:A[X,1/X]$ be invertible.
   The result is clear if $A$ is an integral domain. Let $B(A)$ is the constructible spectrum of $A$
   with the two generating maps $D(a)$ and $V(a)$ for $a$ in $A$ \cite{lombardi-quitte}. The argument for an integral
   domain, looking at $D(a)$ as $a\neq 0$ and $V(a)$ as $a =0$, shows that we have $\vee_i D(a_i) = 1$
   and $D(a_ia_j) = 0$ for $i\neq j$. Since $A$ is connected, this implies that exactly one $a_i$ is a unit,
   and all the other coefficient are nilpotent.
 \end{proof}

 We can prove in the same way.

 \begin{lemma}\label{nilpunit}
   For any commutative ring $A$, a polynomial $a_0+a_1X+\dots+a_nX^n$ is a unit in $A[X]$ iff
   $a_0$ is a unit and $a_1,\dots,a_n$ are nilpotent. A polynomial $a_0+a_1X+\dots+a_nX^n$ is nilpotent in $A[X]$ iff
   all $a_i$ are nilpotent.
 \end{lemma}
 
 \begin{lemma}\label{connected}
   If $A$ is connected, so is $A[X]$.
 \end{lemma}

 \begin{proof}
   Assume $A$ connected.
   Let $e(X) = a_0+a_1X+\dots+a_nX^n$ be an idempotent element in $A[X]$. Then $a_0$ is idempotent in $A$
   and hence $a_0 = 0$ or $a_0 = 1$. If $a_0 = 0$ then we have $a_1 = 0$ and then $a_2 = \dots = a_n = 0$
   and $e(X) = 0$. If $a_0 = 1$ then $1-e(X)$ is idempotent and we deduce similarly that $1 = e(X)$.
  \end{proof}

 Using Lemma \ref{stand}, we deduce the following, for $A$ connected ring.

\begin{lemma}\label{nilpotent}
  Any invertible element of $A[X,1/X]$ can be written uniquely as a product
  $uX^l(1+a)(1+b)$ with $l$ in $\Z$, $u$ in $A^{\times}$ and $a$ (resp. $b$)
  polynomial in $XA[X]$ (resp. $1/XA[1/X]$) with only nilpotent coefficients.
\end{lemma}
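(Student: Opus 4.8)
The plan is to reduce $P$, via the nilradical, to the reduced connected case controlled by \Cref{stand}, and then to handle the remaining nilpotent factor by an induction on its degree of nilpotence. As in \Cref{stand}, $A$ is connected; write $\mathfrak{n}$ for its nilradical and recall that $\mathfrak{n}[X,1/X]$ is the nilradical of $A[X,1/X]$ and that $A_{\mathrm{red}}[X,1/X]$ is again connected and reduced. Given an invertible $P:A[X,1/X]$, its image $\bar P$ in $A_{\mathrm{red}}[X,1/X]$ is a unit of a reduced connected ring, so \Cref{stand} --- in which now every ``nilpotent'' coefficient is zero --- gives $\bar P = \bar u\, X^{l}$ for a unique $l:\Z$ and a unique $\bar u:(A_{\mathrm{red}})^{\times}$. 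Since $A\to A_{\mathrm{red}}$ is surjective with nilpotent kernel, $A^{\times}\to(A_{\mathrm{red}})^{\times}$ is surjective, so we may lift $\bar u$ to some $u:A^{\times}$. Then $c\colonequiv Pu^{-1}X^{-l}-1$ lies in $\mathfrak{n}[X,1/X]$, hence has finitely many coefficients, each nilpotent, so the ideal $\mathfrak{a}$ they generate satisfies $\mathfrak{a}^{M}=0$ for some $M$. It now suffices to factor $1+c = v(1+a)(1+b)$ with $v:1+\mathfrak{a}$, $a:X\mathfrak{a}[X]$ and $b:X^{-1}\mathfrak{a}[1/X]$, for then $P=(uv)X^{l}(1+a)(1+b)$ has the required shape.

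For existence I would induct on $M$, the case $M=1$ (so $c=0$) being trivial. For $M\geq 2$, pass to $A/\mathfrak{a}^{M-1}$, which is again connected and in which the image of $\mathfrak{a}$ has nilpotence degree $M-1$; the induction hypothesis factors the image of $1+c$ there, and we lift the three factors coefficientwise to $v_{0}:1+\mathfrak{a}$, $a_{0}:X\mathfrak{a}[X]$, $b_{0}:X^{-1}\mathfrak{a}[1/X]$. Setting $g\colonequiv v_{0}(1+a_{0})(1+b_{0})$, the quotient $g^{-1}(1+c)$ is a unit of the form $1+e$ with $e:\mathfrak{a}^{M-1}[X,1/X]$. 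The key point is that $\mathfrak{a}^{M-1}\cdot\mathfrak{a}^{M-1}\subseteq\mathfrak{a}^{M}=0$ when $M\geq 2$, so any product of two of the three homogeneous pieces $e_{0},e_{+},e_{-}$ of $e$ (of degree $0$, positive, and negative respectively) already vanishes; hence $1+e=(1+e_{0})(1+e_{+})(1+e_{-})$ holds exactly. Multiplying this into $g$ and regrouping, using that $A[X,1/X]$ is commutative, gives $1+c = \bigl(v_{0}(1+e_{0})\bigr)\bigl((1+a_{0})(1+e_{+})\bigr)\bigl((1+b_{0})(1+e_{-})\bigr)$, and each bracket has the desired form, since the extra cross terms it produces again lie in $\mathfrak{a}\cdot\mathfrak{a}^{M-1}=0$.

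For uniqueness, given two such factorisations, reduction modulo $\mathfrak{n}$ together with \Cref{stand} (over $A_{\mathrm{red}}$, where the monomial representative is manifestly unique) forces the two exponents $l$ to coincide and the two units $u$ to coincide modulo $\mathfrak{n}$; cancelling $X^{l}$ then reduces the claim to injectivity of the multiplication homomorphism $(1+\mathfrak{n})\times U_{+}\times U_{-}\to A[X,1/X]^{\times}$, where $U_{+}$ (resp.\ $U_{-}$) is the subgroup of elements $1+a$ with $a:XA[X]$ (resp.\ $a:X^{-1}A[1/X]$) having nilpotent coefficients. An element of its kernel satisfies $(1+a)(1+b)=w$ for some $w:A$; writing $(1+a)(1+b)=1+a+b+ab$ and supposing $a\neq 0$ with top degree $q\geq 1$, the coefficient of $X^{q}$ is the (nonzero) leading coefficient of $a$, because $b$ and $ab$ contribute only in degrees $<q$ --- contradicting constancy. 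Hence $a=0$, and then $1+b=w:A$ forces $b=0$ and $w=1$. I expect the existence step --- specifically, untangling the constant and cross terms produced by $(1+a)(1+b)$ --- to be the only genuine obstacle; the induction on nilpotence degree, together with the observation that products of two error terms vanish, is what makes it go through, with everything else being routine bookkeeping around the nilradical and \Cref{stand}.
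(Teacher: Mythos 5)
Your proof is correct and its structure is close to the paper's, but the handling of the nilpotent part is genuinely different. The paper, after invoking \Cref{stand} to reduce to $1 + \sum v_n X^n$ with all $v_n$ nilpotent, repeatedly multiplies by the inverse of the negative-degree part, observing that each pass pushes the negative coefficients into a higher power of the nilpotent ideal $J$, so the process terminates when $J^N = 0$ is reached. You instead pass to $A_{\mathrm{red}}$ to extract $uX^{l}$, then run a clean strong induction on the nilpotence index $M$ of the ideal $\mathfrak{a}$ generated by the remaining coefficients, reducing modulo $\mathfrak{a}^{M-1}$ to get an approximate factorisation and then correcting it with the observation that the error term $1+e$ with $e$ in $\mathfrak{a}^{M-1}$ splits exactly as $(1+e_{0})(1+e_{+})(1+e_{-})$ because $(\mathfrak{a}^{M-1})^{2}\subseteq\mathfrak{a}^{M}=0$ when $M\geq2$. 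Both arguments exploit the same nilpotence, so this is a reorganisation rather than a new idea, but your version has the merit that each inductive correction is exact rather than only improving the approximation. On the uniqueness side you both reduce to a degree comparison; packaging it as triviality of the kernel of the multiplication homomorphism $(1+\mathfrak{n})\times U_{+}\times U_{-}\to A[X,1/X]^{\times}$ is a tidier formulation than the paper's one-line remark. One small point of care: your phrase ``supposing $a\neq0$ with top degree $q$'' should really be an explicit downward induction on the degree bound (compare the top coefficient, set it to zero, repeat), since in this constructive setting one cannot decide which coefficient is the leading one; this is easy to fix and the paper is equally terse here.
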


\begin{proof}
  Write $\Sigma v_nX^n$ the invertible element of $A[X,1/X]$.
  W.l.o.g. we can assume, by the previous Lemma, that the polynomial is of the form $v_0 + \Sigma v_nX^n$ with
  all $v_n,~n\neq 0$ nilpotent.
  We let $J$ be the ideal generated by these nilpotent elements.
  We have some $N$ such that $J^N = 0$.
  
  We first multiply by the inverse of $v_0 + \Sigma_{n<0}v_nX^n$, making all coefficients of
  $X^n,~n<0$ in $J^2$.
  We keep doing this until all these elements are $0$.
  We have then written the invertible polynomials on the form $u(1+a)(1+b)$ with $u$ unit in $A$.

  Such a decomposition is unique: if we have $(1+a)(1+b)$ in $A^{\times}$ with $a = \Sigma_{n> 0}a_nX^n$
  and $b = \Sigma_{n<0}b_nX^n$ then we have $a = b = 0$. 
\end{proof}

\begin{corollary}\label{Pic1}
  We have $\prod_{L:\bP^1\rightarrow \KR}\Sigma_{p:\Z}\|L = \OO(p)\|$
\end{corollary}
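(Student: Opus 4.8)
The plan is to show that every line bundle $L:\bP^1\to\KR$ is merely equal to some $\OO(p)$ by trivialising $L$ on the two standard charts of $\bP^1$ and reading off the transition function. First I would invoke Zariski local choice to get that $L$ is locally trivial, and then use \Cref{c1}: on the chart $U_0 = \{[1:r]\}$ the bundle $L$ becomes a line bundle on $\A^1$ which — via the Horrocks-type \Cref{stand}-style input already developed and the proposition preceding \Cref{c1} — is trivial, and likewise on $U_1 = \{[r:1]\}$. So we may choose trivialisations $L|_{U_0}\cong \OO_{U_0}$ and $L|_{U_1}\cong \OO_{U_1}$. On the overlap $U_0\cap U_1 = \{[1:r]\mid r:R^\times\}\cong \Spec R[X,1/X]$, the comparison of the two trivialisations is an element of $(R[X,1/X])^\times$, i.e.\ the transition cocycle of $L$.

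Next I would apply \Cref{nilpotent} to this transition unit, writing it uniquely as $uX^l(1+a)(1+b)$ with $u:R^\times$, $l:\Z$, $a\in R[X]$ with nilpotent coefficients, and $b\in \tfrac1X R[1/X]$ with nilpotent coefficients. The key observation is that the factor $(1+a)$ extends to a unit on the chart $U_0$ (it is a polynomial unit there) and $(1+b)$ extends to a unit on the chart $U_1$, while $u$ is a global unit; hence, after absorbing $1+a$ into the trivialisation over $U_0$, absorbing $(1+b)$ into the trivialisation over $U_1$, and absorbing $u$ into either one, the transition function can be taken to be exactly $X^l$. But $X^l$ is (by construction of $\OO(d)$ on the two charts, cf.\ the explicit description of $\OO(1)$ on $\sum_{l:\KR}T_1(l)$ and $R^2\setminus\{0\}/R^\times$) precisely the transition cocycle of $\OO(l)$ — or of $\OO(-l)$ depending on orientation conventions — so $\|L=\OO(l)\|$. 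Since everything in sight lands in a proposition, the choices made (the trivialisations, and the bracketed equalities from \Cref{c1}) are harmless, so the whole construction goes through under $\|-\|$, and we get the desired element of $\sum_{p:\Z}\|L=\OO(p)\|$.

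I would package this as follows. A line bundle on $\bP^1$ together with a trivialisation on $U_0$ and on $U_1$ is the same as a unit of $R[X,1/X]$; this gives a surjection $(R[X,1/X])^\times \twoheadrightarrow \sum_{L:\bP^1\to\KR}\|L|_{U_0}=\OO\|\times\|L|_{U_1}=\OO\|$, and by the two \Cref{c1} truncations the target is (mere-)equivalent to $\bP^1\to\KR$. Now \Cref{nilpotent} exhibits the retraction $(R[X,1/X])^\times \to \Z$, $uX^l(1+a)(1+b)\mapsto l$, and one checks that the composite $\Z \to (R[X,1/X])^\times \to (\bP^1\to\KR)$ hits $\OO(p)$ for $p\mapsto l$; combined with surjectivity this yields that every $L$ is merely an $\OO(p)$.

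The main obstacle I expect is bookkeeping rather than a genuine difficulty: precisely matching the normal form $uX^l(1+a)(1+b)$ of \Cref{nilpotent} to the geometry of the two affine charts — i.e.\ verifying that $1+a$ and $u$ really do extend to units over $U_0$ and $1+b$ over $U_1$, so that they can be gauged away — and nailing down the sign/orientation convention so that the leftover exponent $X^l$ corresponds to $\OO(l)$ rather than $\OO(-l)$. Getting the nilpotent-coefficient factors absorbed cleanly is exactly where \Cref{nilpotent} (and behind it \Cref{stand}) is doing the real work, replacing the classical statement that $\mathrm{Pic}(\A^1_{\mathrm{red}})$-type obstructions vanish; once that is in place the identification with $\OO(p)$ is immediate from the definition of $\OO(d)$ on the charts.
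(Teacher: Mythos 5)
Your proposal follows the same route as the paper's proof: trivialise $L$ on the two standard affine charts via \Cref{c1}, identify the resulting transition datum with a unit of $R[X,1/X]$, and apply \Cref{nilpotent} to normalise it to $X^l$ up to factors absorbable into the chart trivialisations, which gives $\|L=\OO(p)\|$. The paper compresses all of this into a single sentence; your write-up just supplies the bookkeeping (and the sign-convention worry is harmless since $p$ ranges over all of $\Z$).
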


\begin{proof}
A line bundle $L([x_0,x_1])$ on $\bP^1$ is trivial on each of the affine charts $x_0\neq 0$ and $x_1\neq 0$ by Corollary \ref{c1}, so
it is characterised by an invertible Laurent polynomial on $R$, and the result follows from Lemma \ref{nilpotent}.
\end{proof}

We can then state the following strengthening.

\begin{proposition}\label{Matthias}
  The map $\KR\times\Z\rightarrow (\bP^1\rightarrow \KR)$
  which associates to $(l_0,d)$ the map $x\mapsto l_0\otimes \OO(d)(x)$ is an equivalence.
\end{proposition}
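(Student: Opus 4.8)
The plan is to build an explicit inverse to the map $f\colon\KR\times\Z\to(\bP^1\to\KR)$, $f(l_0,d)=\bigl(x\mapsto l_0\otimes\OO(d)(x)\bigr)$. Note first that $f(R^1,d)=\OO(d)$: pointwise this is the unitor $R^1\otimes\OO(d)(x)=\OO(d)(x)$, and these assemble by function extensionality. Define $g\colon(\bP^1\to\KR)\to\KR\times\Z$ by $g(L)\colonequiv\bigl(L([1:0]),\,p_L\bigr)$, where $p_L\colon\Z$ is the integer supplied by \Cref{Pic1}. This $p_L$ is well defined because $\|\OO(a)=\OO(b)\|$ implies $a=b$: after tensoring, $\OO(a-b)$ becomes merely trivial, and the uniqueness in \Cref{nilpotent} — applied, as in the proof of \Cref{Pic1}, to the transition Laurent polynomial $X^{a-b}$ of $\OO(a-b)$ on the two standard charts — excludes this unless $a=b$. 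I then aim to prove $g\circ f=\mathrm{id}$ and $f\circ g=\mathrm{id}$, which together make $f$ an equivalence.

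The identity $g\circ f=\mathrm{id}$ is direct. Under the identifications of \Cref{identification-Pn} the line over $[1:0]\in\bP^1$ is $R^1$, so $\OO(d)([1:0])=(R^1)^{\otimes d}=R^1$ and hence $f(l_0,d)([1:0])=l_0\otimes R^1=l_0$; this is the first coordinate. For the second coordinate, $\KR$ is connected, so $\|l_0=R^1\|$ and therefore $\|f(l_0,d)=f(R^1,d)\|$, i.e.\ $\|f(l_0,d)=\OO(d)\|$; since $p_{(-)}$ depends only on the connected component of its argument, $p_{f(l_0,d)}=p_{\OO(d)}=d$, the last step by the injectivity just noted.

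The substantive part is $f\circ g=\mathrm{id}$. Fix $L$ and set $M(x)\colonequiv f(g(L))(x)\otimes L(x)^\vee=L([1:0])\otimes\OO(p_L)(x)\otimes L(x)^\vee$. This line bundle carries a canonical identification $o\colon M([1:0])=R^1$ (from $\OO(p_L)([1:0])=R^1$ and the evaluation isomorphism $l\otimes l^\vee\cong R^1$), and because $L$ merely equals $\OO(p_L)$ we get $\|M=(x\mapsto R^1)\|$. The key input is a lemma on trivial line bundles over $\bP^1$: whenever $\|M=(x\mapsto R^1)\|$, evaluation at $[1:0]$ is an equivalence from the type $\prod_{x:\bP^1}(M(x)=R^1)$ of global trivializations of $M$ onto the type $M([1:0])=R^1$. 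Granting it, $o$ selects a distinguished trivialization of $M$, that is, an identification $M=(x\mapsto R^1)$; tensoring with $L$ and using $l^\vee\otimes l\cong R^1$ turns it into the required $f(g(L))=L$, finishing the proof.

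So the main obstacle is this trivial-bundle lemma, and it is where the only real work lies. Both of its types are torsors — the source over $\bP^1\to R^\times$ (two trivializations of $M$ differ by a map $\bP^1\to R^\times$), the target over $R^\times$ (the self-identifications of $R^1$ in $\KR$) — and evaluation at $[1:0]$ is equivariant along the evaluation homomorphism $(\bP^1\to R^\times)\to R^\times$. That homomorphism is an isomorphism, since every map $\bP^1\to R$ is constant (\Cref{const}) and $R^\times\hookrightarrow R$, so every map $\bP^1\to R^\times$ is constant as well. An equivariant map of torsors along a group isomorphism is an equivalence: both torsors are merely inhabited — the target because $\KR$ is connected, the source by the hypothesis — so one may assume a point on each and reduce to the isomorphism itself. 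The conceptual content of the lemma is that one cannot infer $f(g(L))=L$ directly from the mere equality $\|L=\OO(p_L)\|$, as this is an identification in a $1$-type rather than a proposition; anchoring at $[1:0]$ and invoking \Cref{const} is exactly what rigidifies it to a canonical choice.
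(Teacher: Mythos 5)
Your proof is correct, and rests on the same two essential inputs as the paper's — surjectivity via \Cref{Pic1} and constancy of functions $\bP^1\to R$ via \Cref{const} — but it packages them differently in two ways, both of which are sound. First, where the paper proves the statement as ``surjective and an embedding'' (reducing the embedding part, for equal degrees, to checking that the induced map on loop spaces $R^\times\to(\bP^1\to R^\times)$ is an equivalence by \Cref{const}), you construct an explicit inverse $g$ and verify $f\circ g=\mathrm{id}$ through a rigidification lemma: a merely trivial line bundle on $\bP^1$ equipped with a pointed trivialization at $[1:0]$ is globally, uniquely trivialized. Your torsor argument for that lemma — source and target are torsors over $\bP^1\to R^\times$ and $R^\times$, evaluation is equivariant, and the group homomorphism is an isomorphism by \Cref{const} — is exactly the paper's loop-space computation stated at the level of identity types rather than based loops; the two are genuinely the same idea in dual clothing, and your version has the advantage of producing a reusable statement (``base-pointing a merely trivial bundle on $\bP^1$ canonically trivializes it''). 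Second, and this is the one real divergence: to see that the integer $p_L$ is well-defined, i.e.\ that $\OO(a)\not\simeq\OO(b)$ when $a\neq b$, you appeal to the uniqueness of the decomposition in \Cref{nilpotent} applied to the transition cocycle $X^{a-b}$, whereas the paper counts global sections: $\OO(k)$ for $k>0$ has at least two linearly independent sections while $\OO(0)$ has only constants by \Cref{const}. Your route is more algebraic and ties back to the Laurent-polynomial bookkeeping that already underlies \Cref{Pic1}; the paper's is more geometric and avoids re-examining cocycles. Either way the proposition is established, and your reorganization into an explicit inverse with an isolated torsor lemma is a clean alternative to the paper's surjective-plus-embedding argument.
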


\begin{proof}
  Using Theorem 4.6.3 of \cite{hott}, it is equivalent to show that this map is both surjective and an embedding.
  Corollary \ref{Pic1} shows that this map is surjective.
  So we can conclude by showing that the map is also an embedding.
  For $(l,d),(l',d'):\KR\times\Z$ let us first consider the case $d=d'$. 
  Then we merely have $(l,d)=(\ast,d)$ and $(l',d')=(\ast,d)$,
  so it is enough to note that the induced map on loop spaces based at $(\ast,d)$ is an equivalence by \Cref{const}.
  Now let $d\neq d'$. To conclude we have to show $\OO(k)$ is different from $\OO(0)$ for $k\neq 0$.
  It is enough to show that for $k>0$ the bundle $\OO(k)$ has at least two linear independent sections,
  since we know $\OO(0)$ only has constant sections by \Cref{const}.
  This follows from the fact that $\OO(k)(x)$ is $\Hom_{\Mod{R}}(Rx^{\otimes k},R)$ and has all projections as sections.
\end{proof}

Note that the map $\Z\rightarrow (\bP^1\rightarrow \KR)$ which
associates to $d$ the map $x\mapsto \OO(d)(x)$ is an also surjective, but it is not an embedding.

 It is a curious remark that $\KR\rightarrow \KR$ is also equivalent
 to $\KR\times \Hom_{\mathrm{Group}}(R^\times,R^\times) = \KR\times\Z$.

\begin{corollary}\label{Matthias1}
  We have $\prod_{L:\bP^1\rightarrow \KR}\prod_{x:R}L([1:x]) = L([0:1])$.
\end{corollary}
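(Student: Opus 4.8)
The plan is not to argue chart by chart: \Cref{c1} tells us that $L$ restricted to either standard affine chart of $\bP^1$ is \emph{merely} constant, but the point $[0:1]$ lies only in the chart $\{[r:1]\mid r:R\}$ and not in $\{[1:x]\mid x:R\}$, and in any case \Cref{c1} is a propositionally truncated statement, so it cannot by itself produce the untruncated dependent function we are after. Instead I would run the whole statement through the classification of line bundles on $\bP^1$ given by \Cref{Matthias}, which reduces it to an explicit computation with the bundles $\OO(d)$.

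The key steps, in order: (1) By \Cref{Matthias} the map $(l_0,d)\mapsto(x\mapsto l_0\otimes\OO(d)(x))$ is an equivalence $\KR\times\Z\to(\bP^1\to\KR)$, so its fibre over a given $L$ is contractible; from it I extract $l_0:\KR$, $d:\Z$ and an identification $L=(x\mapsto l_0\otimes\OO(d)(x))$, and transporting along this identification reduces the goal to $\prod_{x:R}l_0\otimes\OO(d)([1:x])=l_0\otimes\OO(d)([0:1])$. (2) Both $l_0\otimes(-):\KR\to\KR$ and the fixed map $g_d:\KR\to\KR$ expressing $\OO(d)(y)$ in terms of $\OO(-1)(y)$ (an iterated tensor power together with a dualisation) act on identity proofs, so it suffices to construct, for each $x:R$, a path $\OO(-1)([1:x])=\OO(-1)([0:1])$ in $\KR$. (3) Under the identification $\bP^1\simeq\Gr(1,R^2)$ of \Cref{identification-Pn}, together with the description of $\OO(-1)$ on the Grassmannian as sending a submodule to itself viewed as an element of $\KR$, we have $\OO(-1)([1:x])=R\cdot(1,x)$ and $\OO(-1)([0:1])=R\cdot(0,1)$. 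Each of these is free of rank one with a distinguished generator, $(1,x)$ respectively $(0,1)$, so the isomorphisms $R^1\cong R\cdot(1,x)$, $r\mapsto r(1,x)$, and $R^1\cong R\cdot(0,1)$, $r\mapsto r(0,1)$, yield by univalence a path $\OO(-1)([1:x])=R^1=\OO(-1)([0:1])$. Reading this back through $g_d$ and then through $l_0\otimes(-)$ gives $\prod_{x:R}L([1:x])=L([0:1])$.

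I expect the one genuine subtlety to be conceptual rather than computational: the distinguished generator $(1,x)$ depends on $x$, so there is \emph{no} single trivialisation of $\OO(-1)$ that works simultaneously at $[1:x]$ and at the out-of-chart point $[0:1]$ — which is exactly why the naive chart argument fails — but this is harmless, since the statement only demands, separately for each $x$, one element of the identity type $\OO(-1)([1:x])=\OO(-1)([0:1])$, i.e.\ a term of the dependent product, and not a coherent family in any stronger sense. The remaining checks are bookkeeping: confirming that \Cref{identification-Pn} sends $[1:x]$ and $[0:1]$ to the submodules $R\cdot(1,x)$ and $R\cdot(0,1)$, and that tensoring and dualising in $\KR$ transport identity proofs as expected.
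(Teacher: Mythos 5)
Your proof is correct and takes essentially the same approach as the paper: both reduce via \Cref{Matthias} to the case $L = l_0\otimes\OO(d)$ and then observe that the relevant fibres $\OO(d)([1:x])$ and $\OO(d)([0:1])$ are all identified with $R^1$ by explicit trivialisations of the twisting sheaves. The paper states this triviality directly for $\OO(d)$ while you route through $\OO(-1)$ and the Grassmannian picture, but this is just a more spelled-out version of the same computation.
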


\begin{proof}
  By the equivalence in \Cref{Matthias}, we have
  \[ \prod_{L:\bP^1\to \KR} \,\prod_{x : \bP^1}  L(x)=l_0\otimes \OO(d)(x) \]
  for some $(l_0,d)$ corresponding to $L$.
  $\OO(d)([0:1])$ can be identified with $R^1$ and $\OO(d)$ is trivial on $R$,
  so we have $L([1:x])=l_0=L([0:1])$ for all $x:R$.
\end{proof}

\section{Line bundles on $\bP^n$}
We will prove $\Pic(\bP^n)=\Z$ and a strengthening thereof in this section by mostly algebraic means.
In \Cref{geometric-proof} we will give a shorter geometric proof.

We can now reformulate Quillen's argument for Theorem 2' \cite{Quillen} in our setting.

\begin{proposition}\label{trivial}
  For all $V:\bP^n\rightarrow \KR$ we have ${\prod_{s:R^n}V([1:s]) = V([0:1:0:\cdots :0])}$.
\end{proposition}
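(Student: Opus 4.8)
We may assume $n\geq 1$ (for $n=0$, $\bP^0$ is a point and the statement is vacuous). The plan is to reduce the assertion to the one-variable case established in Corollary~\ref{Matthias1}, by restricting the line bundle $V$ along linear embeddings of $\bP^1$ into $\bP^n$. Concretely, any split-injective $R$-linear map $\varphi\colon R^2\to R^{n+1}$ restricts to a map $R^2\setminus\{0\}\to R^{n+1}\setminus\{0\}$ — apply a retraction of $\varphi$ to see that it preserves the nonvanishing condition — and is $R^\times$-equivariant, hence descends to a map $\psi_\varphi\colon\bP^1\to\bP^n$ (working with the model of Remark~\ref{identification-Pn}(ii)). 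Thus $V\circ\psi_\varphi\colon\bP^1\to\KR$ is a line bundle to which Corollary~\ref{Matthias1} applies, and it remains only to choose $\varphi$ well and to track the images of the two distinguished points $[1:x]$ and $[0:1]$ of $\bP^1$.

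First I would fix $s'=(s_1,\dots,s_{n-1}):R^{n-1}$ and take $\varphi(a,b)=(a,as_1,\dots,as_{n-1},b)$, which is split injective, a retraction being projection onto the $0$th and $n$th coordinates. Then $\psi_\varphi([1:x])=[1:s_1:\cdots:s_{n-1}:x]$ and $\psi_\varphi([0:1])=[0:\cdots:0:1]$, so Corollary~\ref{Matthias1} applied to $V\circ\psi_\varphi$ gives $\prod_{x:R}V([1:s_1:\cdots:s_{n-1}:x])=V([0:\cdots:0:1])$. As $s'$ was arbitrary, this is exactly $\prod_{s:R^n}V([1:s])=V([0:\cdots:0:1])$.

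It remains to identify the basepoint, i.e.\ to show $V([0:\cdots:0:1])=V([0:1:0:\cdots:0])$, which is trivial for $n=1$. For $n\geq 2$ I would run the same argument with $\varphi'(a,b)=(0,a,0,\dots,0,b)$ (split injective, retracted by projection onto the $1$st and $n$th coordinates): here $\psi_{\varphi'}([1:x])=[0:1:0:\cdots:0:x]$ and $\psi_{\varphi'}([0:1])=[0:\cdots:0:1]$, so Corollary~\ref{Matthias1} gives $V([0:1:0:\cdots:0:x])=V([0:\cdots:0:1])$ for all $x:R$, and setting $x=0$ yields the claim. Combining this with the identity of the previous paragraph proves the proposition.

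Essentially all of the content is therefore absorbed into the one-variable case — Corollary~\ref{Matthias1}, which rests on Corollary~\ref{c1}, on the computation of $\Pic(\bP^1)$ through Corollary~\ref{Pic1} and Proposition~\ref{Matthias}, and on the Horrocks-type results of the previous section — and it is there that Quillen's argument has genuinely been used. The only point requiring care in the present reduction is the bookkeeping for the induced maps $\psi_\varphi,\psi_{\varphi'}\colon\bP^1\to\bP^n$ and the images of $[1:x]$ and $[0:1]$; I expect this to be the main, and rather minor, obstacle. A more faithful transcription of Quillen's patching argument is also conceivable — presenting $\bP^n$ as a $\bP^1$-bundle over $\bP^{n-1}$ and showing that $V$ is pulled back from the base — but the work would then shift to a relative Horrocks theorem over $\bP^{n-1}$, which looks harder than the bootstrap above.
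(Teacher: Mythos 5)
Your proof is correct and uses essentially the same approach as the paper: restrict $V$ along a family of linear embeddings $\bP^1\hookrightarrow\bP^n$ parametrized by $R^{n-1}$ and invoke Corollary~\ref{Matthias1}. The paper avoids your second step (identifying $V([0:\cdots:0:1])$ with $V([0:1:0:\cdots:0])$) by the slightly slicker choice of embedding $[x_0:x_1]\mapsto[x_0:x_1:x_0t_1:\cdots:x_0t_{n-1}]$, which sends $[0:1]$ directly to $[0:1:0:\cdots:0]$.
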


\begin{proof}
  We define $L:R^{n-1}\rightarrow (\bP^1 \to \KR)$ by $L~t~[x_0:x_1] = V([x_0:x_1:x_0t])$.
  Let $s=(s_1,\dots,s_{n}):R^{n}$. We apply Corollary \ref{Matthias1} and we get
  \[
   V([1:s]) = L~(s_2,\dots,s_n)~[1:s_1] = L~(s_2,\dots,s_n)~[0:1] = V([0:1:0:\cdots :0])
   \rlap{.}
  \]
\end{proof}

 Note that the use of Corollary \ref{Matthias1} replaces the use of the ``Quillen patching''
 \cite{lombardi-quitte} introduced in \cite{Quillen} (see Appendix 3).

\medskip

Let $A$ be a commutative ring.
Let $T(A)$ be the ring of polynomials $u = \Sigma_p u(p)X^p$ with
$X^p = X_0^{p_0}\dots X_n^{p_n}$ with $\Sigma p_i = 0$. We write $T_l(A)$ for the subring
of $T(A)$ which contains only monomials $X^p$ with $p_i\geqslant 0$ if $i\neq l$
and $T_{lm}(A)$ the subring of $T(A)$ 
which contains only monomials $X^p$ with $p_i\geqslant 0$ if $i\neq l$ and $i\neq m$.

Note that $T_l(A)$ is the polynomial ring $T_l(A) = A[X_0/X_l,\dots,X_n/X_l]$.

A line bundle on $\bP^n$ is given by compatible line bundles on each $\Spec(T_l(R))$.

The result $\Pic(\bP^n) = \Z$ will be a consequence of the following result, proved in the Appendix.

\begin{proposition}\label{units}
  Let $A$ be a {\em connected} ring and let $t_{ij}$ be a family of  invertible elements in $T_{ij}(A)$ such that $t_{ii} = 1$
  and $t_{ij}t_{jk} = t_{ik}$. There exists an integer $N$ and $s_i$ invertible in $T_i(A)$ such that $t_{ij} = (X_j/X_i)^Ns_j/s_i$ 
\end{proposition}


\begin{corollary}
  $\Pic(\bP^n) = \Z$.
\end{corollary}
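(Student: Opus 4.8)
The goal is to prove $\Pic(\bP^n) = \Z$, which at this point in the paper amounts to collecting the pieces that have just been assembled. The plan is to show that the map $\Z \to \Pic(\bP^n)$ sending $d$ to the class of $\OO(d)$ is a bijection, and to do so by unwinding the cocycle description of line bundles over the standard affine cover of $\bP^n$.

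First I would recall the setup from the preceding discussion: a line bundle $L : \bP^n \to \KR$ is equivalent to the data of compatible trivial line bundles on each chart $\Spec(T_l)$ together with transition functions $t_{ij}$ invertible in $T_{ij}$ satisfying the cocycle condition $t_{ik} = t_{ij}t_{jk}$ and $t_{ii} = 1$. By \Cref{trivial} each restriction $L|_{\Spec(T_l)}$ is trivial, so this cocycle description genuinely captures all of $\Pic(\bP^n)$. Then, using \Cref{stand}, each $t_{ij}$ can be normalised to the form $(X_i/X_j)^{N_{ij}} u_{ij}$ with $u_{ij}(0)$ a unit and all other coefficients nilpotent; the cocycle relation modulo the nilradical forces $N_{ij} = N$ to be independent of $i,j$, and this $N$ is exactly the degree that will correspond to $\OO(N)$.

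Next I would invoke the immediately preceding \Cref{proposition} (existence of units $s_i$ in $T_i$ with $u_{ij} = s_i/s_j$) to conclude that the unit part $u_{ij}$ of the cocycle is a coboundary, hence cohomologically trivial. Therefore the cocycle $(t_{ij})$ is cohomologous to the cocycle $((X_i/X_j)^N)_{ij}$, which is precisely the transition cocycle defining $\OO(N)$ on the standard cover. This gives surjectivity of $\Z \to \Pic(\bP^n)$ together with the statement that every class is some $\OO(N)$. For injectivity I would argue that $\OO(N) = \OO(N')$ implies $N = N'$: passing to the cocycle description again, an isomorphism $\OO(N) \cong \OO(N')$ would give units $v_l$ in $T_l$ with $(X_i/X_j)^N = (v_i/v_j)(X_i/X_j)^{N'}$, so $(X_i/X_j)^{N-N'} = v_i/v_j$; comparing degrees in $X_i/X_j$ (a genuine polynomial variable in $T_l$) and using \Cref{stand} to see that units in $T_l$ have a well-defined ``monomial part'' forces $N = N'$. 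Alternatively, one can simply cite \Cref{Matthias} for $n=1$ and restrict a hypothetical isomorphism along a linear embedding $\bP^1 \hookrightarrow \bP^n$, reducing to the already-established fact that $\OO(k) \neq \OO(0)$ on $\bP^1$ for $k \neq 0$.

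The main obstacle is really already dispatched by the preceding proposition; the remaining work is bookkeeping, ensuring that the equivalence ``line bundle $=$ normalised cocycle up to coboundary'' is set up carefully enough that both surjectivity and injectivity drop out, and that the integer $N$ extracted from the cocycle is visibly the same $d$ for which the cocycle equals that of $\OO(d)$. I expect the only genuinely delicate point to be the injectivity half — making sure that the degree invariant $N$ is well-defined on cohomology classes — but this is controlled by \Cref{stand}, which pins down the monomial $X^N$ in any unit of $T_l \subseteq R[X,1/X]$-type rings up to nilpotents, so no new ideas are needed beyond what the section has already developed.
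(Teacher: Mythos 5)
Your proposal is correct and matches the paper's approach: the corollary is simply the summary of the cocycle analysis carried out over the preceding page, and you reassemble it the same way — cover by the standard charts, triviality on each chart from \Cref{trivial}, normalization of transition functions via \Cref{stand}, constancy of the exponent $N_{ij}$ after reducing modulo nilpotents, and the coboundary result for the unit parts $u_{ij}$ from the immediately preceding proposition. The paper itself gives no explicit proof here, so your write-up is just the fleshed-out version; the one place you go slightly beyond the text is the injectivity half, which you settle by comparing exponents of $X_i/X_j$ in the cocycle (or alternatively by restriction to a line), whereas the paper defers a fully explicit injectivity statement to \Cref{Matthias2}, proved by the section-counting argument of \Cref{Matthias}. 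Both routes are fine; your degree-comparison argument is a reasonable alternative, though to be fully precise it needs the structure of units in the polynomial ring $T_l$ over the local ring $R$ (constant unit plus nilpotent tail) rather than \Cref{stand}, which is stated for Laurent polynomial rings.
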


We can then strengthen this result, with the same reasoning as in Proposition \ref{Matthias}.

\begin{theorem}\label{Matthias2}
  The map $\KR\times\Z\rightarrow (\bP^n\rightarrow \KR)$
  which associates to $l_0,d$ the map $x\mapsto l_0\otimes \OO(d)(x)$ is an equivalence.
\end{theorem}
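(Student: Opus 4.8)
The plan is to mimic the structure of the proof of \Cref{Matthias}, now that the surjectivity half has been replaced by the just-proven $\Pic(\bP^n)=\Z$. Concretely, I would argue that the map $\Phi\colon\KR\times\Z\to(\bP^n\to\KR)$, $(l_0,d)\mapsto(x\mapsto l_0\otimes\OO(d)(x))$, is an equivalence by showing it is both surjective (an $(-1)$-connected map, i.e.\ $\|\cdot\|$-surjective) and an embedding (induces equivalences on identity types). Surjectivity is immediate: given $L\colon\bP^n\to\KR$, the corollary $\Pic(\bP^n)=\Z$ provides $d:\Z$ with $\|L=\OO(d)\|$; evaluating at a basepoint $x_0:\bP^n$ then gives $l_0\colonequiv L(x_0)$ with $\|L=l_0\otimes\OO(d)\|$ (using $\OO(d)(x_0)=R^1$ after a suitable choice of coordinates, or more invariantly tensoring the propositional equality $L=\OO(d)$ with the line $L(x_0)\otimes\OO(d)(x_0)^{\vee}$), so $(l_0,d)$ is in the propositional image of $\Phi$.

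For the embedding part I would split on whether the two $\Z$-coordinates agree, exactly as in \Cref{Matthias}. If $d=d'$, then $\KR$ being connected lets us reduce to comparing $(\ast,d)$ with $(\ast,d)$, and the induced map on the loop space $\Omega(\KR\times\Z,(\ast,d))=\Omega\KR=R^\times$ is $u\mapsto(x\mapsto u\otimes\OO(d)(x))$ viewed as an element of $\Omega(\bP^n\to\KR)=(\bP^n\to R^\times)$; this is an equivalence precisely because every map $\bP^n\to R^\times\hookrightarrow R$ is constant by \Cref{const} (and conversely the constant-$u$ map is clearly hit). If $d\neq d'$, it suffices to show $\OO(k)\neq\OO(0)$ for $k\neq 0$, and for this I would reuse the section-counting argument from \Cref{Matthias}: for $k>0$, $\OO(k)(x)=\Hom_{\Mod R}(Rx^{\otimes k},R)$ (after identifying $x\in R^{n+1}\setminus\{0\}$), and the $n+1$ coordinate projections give $n+1$ sections of $\OO(k)$ that are linearly independent over $R$ (their linear independence is detected already on the standard affine chart, where they restrict to $1,X_1^k,\dots$ or rather to a monomial basis), whereas $\OO(0)$ has only constant — hence at most a rank-$1$ space of — sections by \Cref{const}; for $k<0$ one argues dually, or simply notes $\OO(k)=\OO(-k)^{\vee}$ so $\OO(k)=\OO(0)$ would force $\OO(-k)=\OO(0)$.

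I expect the main obstacle to be purely bookkeeping rather than conceptual: making the reduction ``$d=d'$ implies $l_0,l_0'$ comparable'' fully precise in the type-theoretic sense. One must be careful that the statement ``$\Phi$ is an embedding'' unfolds to: for all $(l_0,d)$ and $(l_0',d')$, the map $((l_0,d)=(l_0',d'))\to(\Phi(l_0,d)=\Phi(l_0',d'))$ is an equivalence; since the domain is empty when $d\neq d'$, one only needs the target empty there (handled by the $\OO(k)\neq\OO(0)$ argument after noting that an equality $l_0\otimes\OO(d)=l_0'\otimes\OO(d')$ of line bundles can be tensored down to $\OO(d-d')=\OO(0)$ up to a global twist that is itself constant by \Cref{const}), and when $d=d'$ one needs the loop-space computation above. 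Because every intermediate type here is a set or an essentially-$1$-truncated groupoid, the usual technique of "connected map $+$ embedding $=$ equivalence" (\cite{hott}) applies verbatim, and the only genuinely new input beyond \Cref{Matthias} is the corollary $\Pic(\bP^n)=\Z$, which is already in hand.
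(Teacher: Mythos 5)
Your proof is correct and takes essentially the same approach as the paper, whose entire proof of \Cref{Matthias2} is the remark that it follows ``with the same reasoning as in Proposition~\ref{Matthias}.'' You spell out exactly that reasoning: surjectivity from the corollary $\Pic(\bP^n)=\Z$, and the embedding part by splitting on $d=d'$ (a loop-space computation using \Cref{const}) versus $d\neq d'$ (distinguishing $\OO(k)$ from trivial bundles by counting linearly independent global sections).
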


We deduce from this a characterisation of the maps $\bP^n\rightarrow\bP^m$.

\begin{corollary}\label{map}
  A map $\bP^n\rightarrow\bP^m$ is given by $m+1$ homogeneous polynomials $p = (p_0,\dots,p_m)$ on $R^{n+1}$
  of the same   degree $d$ such that $x\neq 0$ implies $p(x)\neq 0$.
\end{corollary}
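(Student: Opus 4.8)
The plan is to unfold the definition $\bP^m = \sum_{l:\KR}T_m(l)$ and reduce to the results already proved about sections of twisted line bundles. Given $f:\bP^n\to\bP^m$, the first projection $\sum_{l:\KR}T_m(l)\to\KR$, $(l,s)\mapsto l$, is by definition $\OO_{\bP^m}(1)$, so composing with $f$ produces a line bundle $g\colonequiv\OO_{\bP^m}(1)\circ f:\bP^n\to\KR$; and, by distributivity of $\Pi$ over $\Sigma$, the map $f$ is exactly the datum of $g$ together with a section $\sigma:\prod_{x:\bP^n}T_m(g(x))$. By \Cref{Matthias2} there is a unique $(l_0,d):\KR\times\Z$ with $g=\bigl(x\mapsto l_0\otimes\OO(d)(x)\bigr)$.

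Next I would eliminate the twist $l_0$. For a fixed $f$, the statement to be proved is a mere proposition — it asserts the mere existence of polynomials $p$ with $f([x])=[p(x)]$ — so, $\KR$ being connected, we may assume $l_0=\ast$. Then $g=\OO(d)$, i.e.\ $g(l,s)=l^{\otimes d}$, and $\sigma$ becomes an element of $\prod_{l:\KR}\bigl(T_n(l)\to T_m(l^{\otimes d})\bigr)$. By \Cref{hom}(ii) this is exactly the datum of $m+1$ homogeneous polynomials $p=(p_0,\dots,p_m)$ of a common degree $d$ in $n+1$ variables such that $x\neq 0$ implies $p(x)\neq 0$.

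It then remains to unwind the identifications of \Cref{identification-Pn}. Under (i)$\leftrightarrow$(ii), a class $[x]:R^{n+1}\setminus\{0\}/R^\times$ corresponds to $(R^1,x):\sum_{l:\KR}T_n(l)$, and, using $(R^1)^{\otimes d}=R^1$, one computes $f(R^1,x)=(R^1,p(x))$, which corresponds to $[p(x)]$; the condition that $p$ be non-vanishing on $R^{n+1}\setminus\{0\}$ is precisely what makes $[p(x)]$ well defined, so $f$ is the map induced by $p$ as claimed.

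The main obstacle I anticipate is the bookkeeping around the twist and the degree/orientation conventions: one must correctly recognise the projection $\bP^m\to\KR$ as $\OO(1)$ (rather than $\OO(-1)$), and justify the reduction to $l_0=\ast$, which relies on reading the corollary as an existence statement. If one instead wants the sharper, untwisted classification — that maps $\bP^n\to\bP^m$ are classified by $\sum_{l_0:\KR}\sum_{d:\Z}$ of $(m+1)$-tuples of $l_0$-valued homogeneous degree-$d$ polynomials satisfying the non-vanishing condition, hence by such tuples up to a common unit scalar — one reruns the fixed-point computation underlying \Cref{hom}(ii) with the constant module $l_0$ carried along, which is routine.
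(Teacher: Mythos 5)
Your proposal is correct and follows essentially the same route as the paper: decompose $\bP^n\to\bP^m$ as $\sum_{s:\bP^n\to\KR}\prod_{x:\bP^n}T_m(s\,x)$, classify $s$ via \Cref{Matthias2}, and reduce the section datum to a tuple of homogeneous polynomials by the fixed-point computation of \Cref{hom}.

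The one deviation is your truncation step: since $\KR$ is connected and (for a fixed $f$) the goal is a mere proposition, you trivialise $l_0=\ast$ so that \Cref{hom}(ii) applies verbatim. The paper instead carries $l_0$ through and invokes the \emph{method} of \Cref{hom} (``as for Lemma~\ref{hom}''), landing on the sharper, untruncated statement that $\bP^n\to\bP^m$ is equivalent to the \emph{set} of such tuples quotiented by proportionality. Your version buys a direct citation of an already-proved lemma at the cost of only yielding the mere-existence form of the corollary, a trade-off you correctly identify and explain how to avoid in your final paragraph; either reading of the informally-stated corollary is defensible, so there is no gap.
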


\begin{proof}
Write $E_n(l)$ for $l^{n+1}\setminus\{0\}$. We have $\bP^n = \Sigma_{l:\KR}E_n(l)$ and so
$$
\bP^n\rightarrow\bP^m = \sum_{s:\bP^n\rightarrow \KR}\prod_{x:\bP^n}E_m(s~x)
$$
Using Theorem \ref{Matthias2}, this is equal to
$$
\sum_{l_0:\KR}\sum_{d:\Z}\prod_{l:\KR}E_n(l)\rightarrow E_m(l_0\otimes l^{\otimes d})
$$
and, as for Lemma \ref{hom}, this is the set of tuples of $m+1$ polynomials in $R[X_0,\dots,X_n]$ homogenenous
of degree $d$, sending $x\neq 0$ to $p(x)\neq 0$, and quotiented by proportionality.
\end{proof}

We deduce the characterisation of $\Aut(\bP^n)$. This is a
remarkable result, since the automorphisms are in this framework only bijections of sets.

\begin{corollary}
  $\Aut(\bP^n)$ is $\PGL_{n+1}(R)$.
\end{corollary}

It follows from Corollary \ref{affine} that $\PGL_{n+1}(R)$ is affine.

\medskip

Note that this is an {\em internal} result about functor of points over a category of
finitely presented $k$-algebras for some commutative ring $k$ \cite{draft}. We have an exact sequence
$$
1\rightarrow R^{\times}\rightarrow \GL_{n+1}(R)\rightarrow \PGL_{n+1}(R)\rightarrow 1
$$
and the global sections of $\PGL_{n+1}(R)$ is only $\PGL_{n+1}(k)$ if $\Pic(k)\neq 0$.

 In general, over a commutative ring,
there can be automorphisms of $\bP^n(k)$ which cannot be described as an element of $\GL_{n+1}(k)$.
For an example, let $k$ be the ring $\ints[x]$ with $x^2+5=0$ and let $m$ be the matrix
\[
m =
\begin{pmatrix}
  1+x & -2 \\
  -2 & 1-x \\
\end{pmatrix}  
\]
Its determinant is $2$, not unit in $k$, and $x\mapsto m^{-1}xm$ defines an automorphism of $M_2(k)$
which is {\em not} inner, but only locally inner \cite{Isaacs}, Example 6. This automorphism defines an
automorphism of $\bP^1(k)$, since a point of $\bP^1(k)$ is a submodule of $k^2$, projective of rank $1$ and factor direct,
which can be seen as an idempotent of $M_2(k)$. One can check that
this automorphim sends the free submodule $k(1,0)$ to the submodule generated by $(3,1+x)$ and $(-1+x,-2)$,
which is not free. This shows that this automorphism cannot be defined by an element of $\GL_2(k)$.

\medskip

We also have the following application of computation of cohomology groups \cite{draft}.

\begin{corollary}
A function $\bP^n\rightarrow\bP^m$ is constant if $n>m$.
\end{corollary}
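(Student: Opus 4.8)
The plan is to reduce the statement to the characterization of maps $\bP^n \to \bP^m$ from \Cref{map} together with a cohomological vanishing fact for $\bP^n$ when $n > m$. By \Cref{map}, a map $f : \bP^n \to \bP^m$ is given by a tuple $p = (p_0, \dots, p_m)$ of homogeneous polynomials of some common degree $d$ in the $n+1$ variables $X_0, \dots, X_n$, subject to the nonvanishing condition that $x \neq 0$ implies $p(x) \neq 0$ (i.e.\ some $p_i(x)$ is a unit), taken up to proportionality. So it suffices to show that, when $n > m$, this nonvanishing condition forces $d = 0$: for then each $p_i$ is a constant (a homogeneous polynomial of degree $0$), the tuple $(p_0, \dots, p_m)$ is a constant element of $R^{m+1}$ which is somewhere a unit, hence a fixed point under proportionality, and the corresponding map $\bP^n \to \bP^m$ is the constant map at $[p_0 : \cdots : p_m]$.

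First I would argue that $d > 0$ is impossible. Suppose $p = (p_0, \dots, p_m)$ are homogeneous of degree $d > 0$ with the property that $p(x) \neq 0$ for all $x \neq 0$ in $R^{n+1}$. Restricting to the standard affine chart $\{X_0 \neq 0\} \cong \A^n$ of $\bP^n$ does not immediately help, so instead I would pass through the common zero locus. The hypothesis says precisely that the closed subscheme $Z = V(p_0, \dots, p_m) \subseteq \bP^n$ defined by the vanishing of all the $p_i$ is empty. Classically, $V(p_0, \dots, p_m)$ in $\bP^n$ has dimension $\geq n - (m+1) = n - m - 1 \geq 0$ whenever $n > m$, so it is nonempty; the synthetic analogue is what the cited cohomology computation of \cite{draft} supplies. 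Concretely, the $p_i$ are global sections of $\OO(d)$ on $\bP^n$, and the statement "$H^0(\bP^n, \OO(d))$ contains $p_0, \dots, p_m$ with no common zero" can be contradicted using that $H^0(\bP^n, \OO(d))$ is exactly the space of degree-$d$ forms (this is \Cref{hom}(i), reading $l^{\otimes d}$ appropriately) and a Koszul/dimension argument: $m+1 \leq n$ forms cannot cut out the empty set in $\bP^n$.

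Concretely, here is the route I expect to take. Since $\prod_{l : \KR} T_n(l) \to T_m(l^{\otimes d})$ is, by \Cref{hom}(ii), the data of such a $p$, and since the nonvanishing locus being all of $R^{n+1}\setminus\{0\}$ is equivalent to the Koszul complex on $(p_0, \dots, p_m)$ being exact in the relevant degrees, I would invoke the computation of the cohomology of the twisting sheaves on $\bP^n$ from \cite{draft}: for $n > m$ and any $d$, the map $R^{m+1} \to H^0(\bP^n, \OO(d))^{\oplus ?}$ cannot have the form required for a nowhere-vanishing section tuple unless $d \leq 0$. Combined with the elementary fact that $p$ homogeneous of negative degree with $p(x) \neq 0$ everywhere is impossible (a nonzero homogeneous polynomial of negative degree does not exist; of degree $0$ it is a constant), we conclude $d = 0$. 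Then the map is constant as explained above.

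The main obstacle will be making precise and citing correctly the cohomological input: I need that in synthetic algebraic geometry the higher cohomology $H^i(\bP^n, \OO(d))$ and the vanishing that forces a system of $m+1 \leq n$ forms to have a common zero are available exactly in the form \cite{draft} provides. If that reference instead only gives $H^i(\bP^n,\OO(d))$ for specific ranges, I would need the dual Koszul argument: the empty common zero locus of $p_0,\dots,p_m$ of degree $d>0$ would split off a contribution to $H^{m}(\bP^n, \OO(-(m+1)d))$ that must vanish since $m < n$, and this forces a contradiction; the bookkeeping of which twist and which cohomological degree appears is the delicate part, but it is standard once the vanishing theorem for $\bP^n$ is in hand.
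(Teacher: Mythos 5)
Your reduction is the same as the paper's: invoke \Cref{map} to write the map as $m+1$ homogeneous forms $p_0,\dots,p_m$ of a common degree $d\geqslant 0$ with no common zero on $R^{n+1}\setminus\{0\}$, then argue $d=0$. But the step that actually rules out $d>0$ when $n>m$ — which is the entire content of the proof — is left as a menu of candidate arguments (a dimension count, an $H^0$-level obstruction, a Koszul complex contribution to $H^m(\bP^n,\OO(-(m+1)d))$) none of which is carried out, and the first two cannot work as stated: a dimension count is not available synthetically, and knowing $H^0(\bP^n,\OO(d))$ alone does not obstruct $m+1$ forms from having empty common zero locus (for $m=n$ the coordinates $X_0,\dots,X_n$ do exactly that). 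You also conflate the needed vanishing in the wrong direction: the claim that $H^m(\bP^n,\OO(-(m+1)d))$ ``must vanish since $m<n$'' is not itself the contradiction, and you do not say where the nonzero class that would contradict it comes from.

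The paper's argument is concrete and worth contrasting with. For $d>0$, each open $U_i=\{p_i\neq 0\}$ is an \emph{affine} open subset of $\bP^n$ (Hartshorne, Exercise~3.5), hence acyclic by \cite{cech-draft}. So the nowhere-vanishing hypothesis says $U_0,\dots,U_m$ is an acyclic covering of $\bP^n$ by $m+1$ opens. Since cohomology is computed as Čech cohomology of any finite acyclic covering (again \cite{cech-draft}), the Čech complex has length $m+1$ and $H^i(\bP^n,-)$ vanishes for all $i>m$. But $H^n(\bP^n,\OO(-n-1))=R\neq 0$, so $n\leqslant m$. Your Koszul-complex idea is a legitimate cousin of this — the Čech complex for this affine cover is essentially a twisted Koszul complex — but the precise statement you needed, that the size of an acyclic cover bounds the cohomological dimension, together with the specific nonvanishing $H^n(\bP^n,\OO(-n-1))\neq 0$, is exactly what you did not isolate.
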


\begin{proof}
We proved in \cite{cech-draft} that cohomology groups can be computed as Cech cohomology for any
finite open acyclic covering and used this to prove $H^n(\bP^n,\OO(-n-1))=R$.
By \Cref{map}, a map $\bP^n\rightarrow\bP^m$ is given by $m+1$ non zero polynomials
$p(x) = (p_0(x),\dots,p_m(x))$ homogeneous of the same degree $d\geqslant 0$ and such that $x\neq 0$ implies $p(x)\neq 0$.
This means that $\bP^n$ is covered by $m+1$ open subsets $U_i(x)$ defined by $p_i(x)\neq 0$.
I claim that we should have $d=0$.

 If $q(x)$ is a non zero homogeneous polynomial of degree $d>0$, the open $q(x)\neq 0$ defines an {\em affine}
 and hence acyclic \cite{cech-draft}, open subset of $\bP^n$ by Corollary \ref{affine}, and so is each finite
 intersection $U_{i_1}\cap\dots\cap U_{i_l}$. So if $d>0$, the covering $U_0,\dots,U_m$ is acyclic.
 If we compute $H^n(\bP^n,\OO(-n-1))$ with respect to {\em this} covering, we get that
 $H^n(\bP^n,\OO(-n-1)) = 0$ since $m<n$. But this contradicts $H^n(\bP^n,\OO(-n-1))=R$.

 Hence $d=0$ and the map is constant.
\end{proof}

\section[Automorphism group of projective space]{Another proof of $\Aut(\bP^n) = \PGL_{n+1}$}
In this section, we present an alternative way to prove that   $\Aut(\bP^n)$ is $\PGL_{n+1}$.

A projective module of rank $1$ over a polynomial
ring $K[X_1,\dots,X_n]$ is free, where $K$ is a discrete field, since this polynomial ring is a gcd domain, see
e.g. \cite{seminormal}. (The result actually holds in general
for rank $n$, this is the famous Serre's Problem \cite{Lam}.) Interpreting this proof dynamically
\cite{lombardi-quitte}, it follows that if $A$ is now an arbitrary commutative ring, and $M$ a
projective module of rank $1$ over $A[X_1,\dots,X_n]$, it is possible to build a binary tree, where
the root is $A$, and each node is an ring extension $i:A\rightarrow B$, and a branch is obtained
by forcing an element $a$ in $A$ to be invertible $B\rightarrow B[1/i(a)]$ or zero $B\rightarrow B/(i(a))$,
and each leaf is such that $M\otimes B[X_1,\dots,X_n]$ is free.

For the ring $R$, we have $a\neq 0\rightarrow \exists_x ax=1$ and hence $\neg \neg (a=0\vee \exists_x a x=1)$.
Hence we have the following result, by a descending induction over finite binary trees.

\begin{lemma}\label{notnot}
  A projective module of rank $1$ over $R[X_1,\dots,X_n]$ is not not free. Equivalently, 
a line bundle on $\Spec(R[X_1,\dots,X_n]) = R^n$ is not not trivial.
\end{lemma}

This generalizes Corollary 3.3.6 of \cite{draft}.


Similarly, using the fact that maps $\bP^n\rightarrow \bP^m$ are given by $m+1$ homogeneous polynomials in $K[X_0,\dots,X_n]$
over a discrete field $K$, we deduce.

\begin{lemma}\label{weakmap}
  Given $\varphi:\bP^n\rightarrow  \bP^m$, we have not not there exists
  $m+1$ homogeneous polynomials $p = (p_0,\dots,p_m)$ on $R^{n+1}$
  of the same   degree $d$ such that $x\neq 0$ implies $p(x)\neq 0$.
\end{lemma}




 We can then give a proof that   $\Aut(\bP^n)$ is $\PGL_{n+1}$ without relying on Horrocks' Theorem.

We see the element of $\bP^n$ as lines in $R^{n+1}$.
If $p_0,\dots,p_n$ are $n+1$ points of $\bP^n$, we can define the open proposition that
the points $p_0,\dots,p_n$ are independent
by the fact that for any choice of non zero elements $v_0,\dots,v_n$ in respectively $p_0,\dots,p_n$
the determinant of the $(n+1)\times (n+1)$ matrix $v_0\dots v_n$ is $\neq 0$.

\begin{corollary}
    $\Aut(\bP^n)$ is $\PGL_{n+1}$.
\end{corollary}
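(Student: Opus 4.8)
The plan is to deduce the statement $\Aut(\bP^n) = \PGL_{n+1}$ directly from \Cref{map} by specializing to the case $m = n$ and analyzing which tuples of polynomials give \emph{automorphisms} rather than merely arbitrary maps. By \Cref{map}, a map $\bP^n \to \bP^n$ is an equivalence class, under scaling by $R^\times$, of $(n+1)$-tuples $p = (p_0, \dots, p_n)$ of homogeneous polynomials of a common degree $d$ in $R[X_0, \dots, X_n]$ satisfying $x \neq 0 \Rightarrow p(x) \neq 0$; composition corresponds to substitution of polynomials. So $\Aut(\bP^n)$ is the group of invertible elements of this composition monoid, and $\PGL_{n+1} = \GL_{n+1}/R^\times$ is the subgroup coming from tuples of degree $d = 1$, i.e.\ linear maps. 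The whole content is therefore to show that every invertible tuple has degree $d = 1$.

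First I would record that the linear maps in $\GL_{n+1}$ do give automorphisms: a matrix $A \in \GL_{n+1}$ yields the tuple $p = A \cdot X$ of degree one, which sends $x \neq 0$ to $Ax \neq 0$ since $A$ is invertible, and $A^{-1}$ provides the inverse map; passing to the quotient by $R^\times$ gives an injective group homomorphism $\PGL_{n+1} \hookrightarrow \Aut(\bP^n)$ — injectivity because a scalar matrix is the only linear map acting trivially, and $\GL_{n+1} \to \Aut(\bP^n)$ has kernel exactly the scalars by the proportionality identification. Then I would show surjectivity. Suppose $[p] : \bP^n \to \bP^n$ with inverse $[q]$, where $p$ has degree $d$ and $q$ has degree $e$. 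Composition $[q] \circ [p]$ is the identity, which as a tuple is $(X_0, \dots, X_n)$ up to scaling; on the other hand it is represented by the substituted tuple $q(p_0, \dots, p_n)$, each component a homogeneous polynomial of degree $de$. Hence $de = 1$ in the relevant sense, forcing $d = e = 1$ — but here I need to be careful, because over a general (non-reduced, non-local-in-the-naive-sense) ring $R$ "degree $de$ equals degree $1$" needs an argument: the identity tuple up to scaling means there is $u : R^\times$ with $q(p(X)) = u \cdot X$, and comparing which monomials can appear, a product of two homogeneous polynomials of positive degree cannot equal $uX_i$ unless both have degree exactly... this is where nilpotents could in principle interfere.

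The main obstacle is precisely this degree argument in the synthetic setting: ruling out that, say, $p$ has degree $2$ and $q$ has degree $0$ with $q(p(X))$ nonetheless equal to $u\cdot X$. Degree $0$ for $q$ would mean $q$ is a constant tuple, but then $q(p(X))$ is constant, not equal to $u \cdot X$ (evaluate at two non-proportional points, using \Cref{const} or just that a nonzero constant tuple does not have the form $u\cdot X$); so $e \geq 1$, and symmetrically $d \geq 1$. Then $q(p(X))$ has every component homogeneous of degree $de \geq 1$, and $q(p(X)) = u X$ forces each component to be degree exactly $1$, so $de = 1$ as exponents, hence $d = e = 1$ — this last step I would justify by comparing monomial supports coefficient-wise, using that $X_i$ is a specific monomial of degree $1$ and that homogeneous polynomials of a fixed positive degree $de > 1$ have no degree-$1$ monomials, so the coefficient of $X_i$ on the right (a unit) would have to vanish, a contradiction since units are nonzero. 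Once $d = 1$ we have $p = A\cdot X$ for a matrix $A$, and the invertibility condition on the tuple together with \Cref{aut} (a linear map sending nonzero vectors to nonzero vectors is in $\GL_{n+1}$) shows $A \in \GL_{n+1}$; passing to proportionality classes identifies $[p]$ with an element of $\PGL_{n+1}$, establishing surjectivity and completing the identification of $\Aut(\bP^n)$ with $\PGL_{n+1}$ as groups.
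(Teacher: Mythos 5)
The paper states this corollary with no written proof, leaving it as an immediate consequence of \Cref{map}; your proposal supplies the missing argument, and it is correct. Your main idea — compose an invertible tuple of degree $d$ with its inverse of degree $e$, observe that the result is the identity tuple $uX$ up to a unit, and deduce $de=1$ hence $d=e=1$ — is exactly the right one, and the case analysis ruling out $d=0$ or $e=0$ is handled properly. Your worry about nilpotents is unfounded but harmless: if each $p_i$ is homogeneous of degree $d\geq 1$ and each $q_j$ of degree $e\geq 1$, then every $q_j(p(X))$ is formally homogeneous of degree $de$ (this is a statement about monomial supports, independent of the coefficient ring), so $q_j(p(X))=uX_j$ with $u$ a unit forces $de=1$ by comparing the coefficient of $X_j$, just as you say. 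A slightly cleaner route you might have taken is to observe that the degree $d$ is a well-defined invariant of the map via the equivalence of \Cref{Matthias2} (the line-bundle component $s:\bP^n\to\KR$ of the data decomposes uniquely as $l_0\otimes\OO(d)$) and that it is multiplicative under composition; then $de=1$ falls out without any coefficient comparison. Either way, once $d=1$ the tuple is $AX$, the nonvanishing condition gives $A\in\GL_{n+1}$ by the second remark in the proof of \Cref{aut}, and the proportionality quotient identifies $\Aut(\bP^n)$ with $\GL_{n+1}/R^\times=\PGL_{n+1}$, with the kernel computation you indicate (a linear map with $Ax\sim x$ for all $x\neq 0$ is scalar) closing the loop.
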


\begin{proof}
  Let $\varphi$ be an element of $\Aut(\bP^n)$.
  Let $u_0,\dots,u_n$ be the vectors $(1,0,\dots,0), \dots, (0,\dots,0,1)$
  and $u$ be the vector $(1,1,\dots,1)$.

  We can find a $(n+1)\times (n+1)$ matrix $A$ such that $\varphi(R u_i) = R (Au_i)$
  and $\varphi(R u) = R (A u)$. By Lemma \ref{weakmap}, we have not not $\varphi$ is represented by $A$,
  that is $\neg\neg \forall_{v\neq 0}\varphi(R v) = R (Av)$. If follows that $\neg \neg det(A) \neq 0$
  and hence $\det(A)\neq 0$ and $A$ is in $GL_{n+1}$.

  In this way, we reduce the problem of showing that if $\neg\neg \forall_{v\neq 0}\varphi (R v) = R v$
  then $\varphi$ is the identity. But then we have not not $\varphi(U_i)\subseteq U_i$
  and so $\varphi(U_i)\subseteq U_i$ for the standard $n+1$ affine charts $U_i$ of $\bP^n$, and we can
  conclude as in \cite{Demazure}, III, 4.6.
\end{proof}

\section[Picard group of projective space (geometric)]{A geometric proof of $\Pic(\bP^n)=\Z$}
\label{geometric-proof}
A geometric property of $\bP^n$:

\begin{lemma}\label{constant-functions-Pn-minus-points}
  Let $n>1$ and $p\neq q$ be points of $\bP^n$, then all functions
  \begin{center}
  \begin{enumerate}[(i)]
  \item $\bP^n\setminus\{p\}\to \Z$
  \item $\bP^n\setminus\{p,q\}\to \Z$
  \item $\bP^n\setminus\{p\}\to R$
  \item $\bP^n\setminus\{p,q\}\to R$
  \end{enumerate}
  \end{center}
  are constant.
\end{lemma}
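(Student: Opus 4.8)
The plan is to reduce all four cases to the already-established fact (Proposition \ref{const}) that every function $\bP^n\to R$ is constant, together with the standard description of $\bP^n\setminus\{p\}$. First I would observe that (i) and (ii) imply (iii) and (iv): a function to $R$ valued in $\Z\subseteq R$ need not factor through $\Z$ a priori, but conversely (iii) and (iv) are the genuinely interesting statements and (i), (ii) follow from them since $\Z$ is a subtype of $R$ closed under the relevant structure — actually the cleaner logical dependency is to prove (iii), then note (i) follows because $\bP^n\setminus\{p\}$ is connected (being covered by affine opens with nonempty, hence connected, overlaps once $n>1$) and a function to $R$ that is locally $\Z$-valued and globally constant has constant $\Z$-value; I will in fact prove (iii) and (iv) directly and deduce (i), (ii) by the same connectedness remark. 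So the heart of the matter is: \emph{every function $\bP^n\setminus\{p\}\to R$ and every function $\bP^n\setminus\{p,q\}\to R$ is constant when $n>1$.}

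For (iii), by homogeneity of $\bP^n$ under $\GL_{n+1}$ (Proposition \ref{aut}) I may assume $p=[0:\cdots:0:1]$. Then $\bP^n\setminus\{p\}$ is the image of the projection $R^{n+1}\setminus\{0\}\supseteq\{x:x_n\ne 0\}^c$... more precisely $\bP^n\setminus\{p\}$ is covered by the standard affine charts $U_i=\{x_i\ne 0\}$ for $i=0,\dots,n-1$, and $U_i\cong\A^n$. A function $f:\bP^n\setminus\{p\}\to R$ restricts on each $U_i$ to a polynomial map $\A^n\to R$ (by the Blechschmidt axiom, $R^{\A^n}$ is the polynomial ring). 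On the overlap $U_i\cap U_j$, which is $\A^{n-1}\times\A^1\setminus\{0\}$... and crucially, since $n>1$, the overlaps $U_i\cap U_j$ have the property that any function on them extends to $U_i$: this is exactly the content of Lemma \ref{ext} applied in the appropriate chart (restriction $R^{n}\to R^{n}\setminus\{0\}$... ), giving that the polynomial on $U_i$ and the polynomial on $U_j$ must agree as elements of $R[X_0/X_j,\dots]$ localized, forcing both to have the same constant term and no other terms. I would run the same bookkeeping as in the proof that $\Pic(\bP^n)=\Z$ (the argument with the rings $T_l$, $T_{lm}$): a compatible family of polynomials $f_l\in T_l$ (here using $T_l$ for functions rather than the coordinate ring description) that agree in every $T_{lm}$ for $l,m<n$ must, because for $n>1$ there are at least two such indices and the relevant intersections are "large enough", all be equal to a single constant in $R$. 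Then (iv) follows a fortiori since $\bP^n\setminus\{p,q\}\subseteq\bP^n\setminus\{p\}$.

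Actually a slicker route for (iii): the composite $\bP^n\setminus\{p\}\hookrightarrow \bP^n$ does not exist, so instead use that $\bP^n\setminus\{p\}$ admits a surjection from $\bP^{n-1}$-worth of lines through $p$... Concretely, projection away from $p$ gives a map $\pi:\bP^n\setminus\{p\}\to\bP^{n-1}$ whose fibers are copies of $\A^1$; a function $f$ on $\bP^n\setminus\{p\}$ restricts on each fiber to a polynomial $\A^1\to R$, and one shows the coefficients are functions $\bP^{n-1}\to R$, hence constant by Proposition \ref{const}, so $f$ is "fiberwise polynomial with constant coefficients"; a further argument using a second center of projection (here $n>1$ is used to have room) pins the polynomial degree to $0$. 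I expect \textbf{the main obstacle} to be precisely this last point — ruling out nonconstant polynomial behaviour along the $\A^1$-fibers — since $\A^1\to R$ has many nonconstant functions; the resolution is that the gluing/compatibility across different affine charts (or across two choices of projection center) is what kills the higher-degree terms, and making that compatibility argument clean is where the real work lies. Once (iii) is done, (i) is immediate by connectedness of $\bP^n\setminus\{p\}$ for $n>1$, and (ii), (iv) follow since removing a second point from a connected scheme of dimension $>1$ leaves it connected, so any locally constant (indeed any $R$- or $\Z$-valued) function there extends the analysis verbatim.
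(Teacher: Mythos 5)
Your proposal has two genuine problems. First, the logical ordering is backwards: you prove (iii) (constancy on $\bP^n\setminus\{p\}$) and then claim (iv) follows \emph{a fortiori} because $\bP^n\setminus\{p,q\}\subseteq\bP^n\setminus\{p\}$. That inclusion goes the wrong way: knowing that every function on the \emph{larger} set $\bP^n\setminus\{p\}$ is constant says nothing about functions defined only on the \emph{smaller} set $\bP^n\setminus\{p,q\}$, since such a function need not be a restriction of one on the larger set. The paper proves (iv) first and deduces (iii) from it, which is the direction that actually works (restrict a function on $\bP^n\setminus\{p\}$ to $\bP^n\setminus\{p,q\}$, conclude it is constant there, and use a second choice of $q$ to cover the remaining point).

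Second, neither of your two sketched routes to (iii) is carried through, and you miss the key step that makes the paper's proof short. You invoke \Cref{ext} to compare polynomials across chart overlaps, but the decisive use of \Cref{ext} is different and more direct: a function on $U_0\setminus\{p\}\cong R^n\setminus\{0\}$ \emph{extends across the removed point} to $U_0\cong R^n$ (for $n>1$), and analogously across $q$ in a second chart $U_1$. These local extensions glue with $f$ to a total function $\widetilde f:\bP^n\to R$, which is constant by \Cref{const}, whence $f$ is constant. This sidesteps entirely the chart-bookkeeping and the "pin down the polynomial degree along $\A^1$-fibers" obstacle you flag as the main difficulty — that obstacle only arises because your approach never extends $f$ past the puncture. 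The $\Z$-valued cases then follow via $\mathrm{Bool}\subseteq R$, not via a separate connectedness argument as you suggest, though the connectedness intuition is related.
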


\begin{proof}
  We start with (iv).
  Let $f:\bP^n\setminus\{p,q\}\to R$.
  For the charts $U_0=\{[x_0:\dots:x_n]\mid x_0\neq 0\}$ and $U_1=\{[x_0:\dots:x_n]\mid x_0\neq 0\}$, we can assume $p\in U_0, p\notin U_1$ and $q\in U_1, q\notin U_0$.
  Then $f_{|U_0\setminus\{p\}}$ can be extended to $U_0$ by \Cref{ext} and an analogous extension exisits on $U_1$.
  These extensions glue with $f$ to a function $\widetilde{f}:\bP^n\to R$ which agrees with $f$ on $\bP^n\setminus\{p,q\}$.
  By \Cref{const}, $\widetilde{f}$ is constant and therefore $f$ is constant.
  This carries over to functions $\bP^n\setminus\{p,q\}\to \mathrm{Bool}$ since $\mathrm{Bool}\subseteq R$ and thus also to any $\bP^n\setminus\{p,q\}\to \Z$,
  which shows (ii).
  (i) and (iii) follow from (ii) and (iv).
\end{proof}
We proceed by extendending \Cref{Matthias} to subspaces of $\bP^n$ which can be constructed like $\bP^1$:

\begin{lemma}\label{line-bundle-on-line}
  Let $M\subseteq R^{n+1}$ be a submodule with $\|M=R^2\|$.
  Then $\Gr(1,M)\subseteq \bP^n$ and the map
  \begin{align*}
    \Z\times \KR &\to (\Gr(1,M)\to \KR) &\\
    (d,l_0) &\mapsto (L\mapsto L^{\otimes d}) &\text{ for $d\geq 0$} \\
    (d,l_0) &\mapsto (L\mapsto \Hom_{\Mod{R}}(L^{\otimes d},R)) &\text{ for $d< 0$} 
  \end{align*}
  is an equivalence.
\end{lemma}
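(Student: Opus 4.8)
The plan is to reduce \Cref{line-bundle-on-line} to \Cref{Matthias} by exhibiting $\Gr(1,M)$ as a copy of $\bP^1$ inside $\bP^n$ and checking that the twisting sheaves restrict correctly. First I would observe that the hypothesis $\|M = R^2\|$ makes $\Gr(1,M)$ into a type which is merely equal to $\Gr(1,R^2) = \bP^1$; more precisely, any identification $M \cong R^2$ induces an equivalence $\Gr(1,M) \simeq \bP^1$, and since the statement we want to prove (``this map is an equivalence'') is a proposition, we may assume such an identification is given. The inclusion $\Gr(1,M)\subseteq \bP^n$ comes from the inclusion $M\subseteq R^{n+1}$, sending a line $L\subseteq M$ to the same $L$ viewed as a line in $R^{n+1}$; this is an embedding since $M\subseteq R^{n+1}$ is.

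Next I would transport \Cref{Matthias} along the equivalence $\Gr(1,M)\simeq\bP^1$. This immediately gives that the map $\KR\times\Z \to (\Gr(1,M)\to\KR)$ sending $(l_0,d)$ to $L\mapsto l_0\otimes \OO(d)(L)$ is an equivalence, where here $\OO(d)$ denotes the $d$-th twisting sheaf of $\Gr(1,M)\cong\bP^1$ in the sense of the \emph{intrinsic} $\bP^1$-structure. The remaining point is purely bookkeeping: I must check that this intrinsic $\OO(d)$ agrees with the map described in the statement, namely $L\mapsto L^{\otimes d}$ for $d\geq 0$ and $L\mapsto \Hom_{\Mod{R}}(L^{\otimes d},R) = (L^\vee)^{\otimes(-d)}$ for $d<0$. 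But this is exactly the definition of $\OO(d)$ given right after \Cref{identification-Pn}, applied with the line $l$ being the tautological sub-line $L$ — so the two descriptions coincide on the nose, using that under the identification (iii)$\to$(i) the module underlying a point of $\Gr(1,M)$ is $L$ itself.

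Finally I would assemble these pieces: the composite $\KR\times\Z \to (\Gr(1,M)\to\KR)$ from the lemma factors (up to the homotopy just discussed) as the transport of the equivalence of \Cref{Matthias} along $\Gr(1,M)\simeq\bP^1$, hence is an equivalence. Since equivalences are stable under precomposition with equivalences and under homotopy, we are done.

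The main obstacle I anticipate is not conceptual but one of coherence: making sure the identification $\OO(d)$-on-$\Gr(1,M)$ versus $L\mapsto L^{\otimes d}$ does not secretly depend on the choice of isomorphism $M\cong R^2$ in a way that breaks the argument. The clean way around this is to phrase $\OO(d)$ on $\Gr(1,M)$ intrinsically — directly as $L\mapsto l^{\otimes d}$ where $l$ is the tautological line, with the case split on the sign of $d$ exactly as in the definition after \Cref{identification-Pn} — so that it is manifestly independent of any chart, and only invoke the identification with $\bP^1$ to know the \emph{classifying} map is an equivalence. With that framing the sign-of-$d$ case distinction in the lemma statement is literally the definition of $\OO(d)$ and nothing needs to be checked there at all.
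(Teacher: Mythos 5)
Your proposal is correct and takes essentially the same approach as the paper: both reduce to \Cref{Matthias} by using that the statement is a proposition, so an actual isomorphism $R^2\cong M$ may be assumed, and both then identify the map in the statement with the map from \Cref{Matthias} along the induced equivalence $\Gr(1,R^2)\simeq\Gr(1,M)$, noting that the $\OO(d)$-formula in the statement is just the tautological-line definition of $\OO(d)$ and hence chart-independent. The paper phrases the coherence check slightly more compactly, via a commutative triangle and equality of maps into the total type $\sum_{V:\Mod{R}}\|V=R^2\|\times(\Gr(1,V)\to\KR)$, but the underlying argument matches yours.
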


\begin{proof}
  We prove a proposition, so we have an $R$-linear isomorphism $\phi:R^2\to M$ and for each $d:\Z$, we get a commutative triangle:
  \begin{center}
  \begin{tikzcd}
    \Gr(1,M)\ar[rr,"\OO(d)"] && \KR \\
    & \Gr(1,R^2)\ar[lu,"\phi"]\ar[ur,swap,"\OO(d)"] &
  \end{tikzcd}
\end{center}
by restricting $\phi$ to each line in $\Gr(1,R^2)$.
This shows that the map from \Cref{Matthias} and from the statement are equal as maps to $(V:\Mod{R})\times \|V=R^2\| \times (V\to\KR))$,
which proves the claim.
\end{proof}

\begin{theorem}
  The map
  \begin{align*}
    \Z\times \KR &\to (\bP^n\to \KR) \\
    (d,l_0) &\mapsto (x\mapsto l_0\otimes \OO(d)(x))
  \end{align*}
  is an equivalence.
\end{theorem}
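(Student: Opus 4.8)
The plan is to show, just as in the proof of \Cref{Matthias}, that the map --- write it $\psi\colon\Z\times\KR\to(\bP^n\to\KR)$ --- is an embedding and is surjective, hence an equivalence. The embedding part I would take over verbatim: for $(d,l_0),(d',l_0')$, the case $d=d'$ reduces, after translating to the point $(\ast,d)$, to the claim that the loop space of $\bP^n\to\KR$ at $\OO(d)$, namely $\bP^n\to R^\times$, equals $R^\times$, which is \Cref{const}; and the case $d\ne d'$ reduces to $\OO(k)\ne\OO(0)$ for $k>0$, which holds because $\OO(k)$ has at least two $R$-linearly independent global sections (two distinct degree-$k$ monomials in the homogeneous coordinates, using \Cref{ext} and \Cref{hom}), whereas $\OO(0)$ has only constant sections by \Cref{const}. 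So everything reduces to surjectivity.

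For surjectivity, fix $L\colon\bP^n\to\KR$; the case $n=1$ is \Cref{Matthias}, so assume $n\ge 2$. First I would extract the candidate pair $(d,l_0)$: restricting $L$ along the inclusion $\Gr(1,M)\hookrightarrow\bP^n$ of a line, i.e.\ of a rank-$2$ submodule $M\subseteq R^{n+1}$, and applying \Cref{line-bundle-on-line} gives $L|_{\Gr(1,M)}=l_0\otimes\OO(d)|_{\Gr(1,M)}$ for a unique $(d,l_0)$, and the integer $d$ does not depend on the chosen line because the space of lines $\Gr(2,R^{n+1})$ is connected while $\Z$ is a set. Replacing $L$ by $L\otimes\OO(-d)$, it then suffices to prove the statement: \emph{if a line bundle on $\bP^n$ restricts to a trivial bundle on every line, then it is trivial}; applying this to the twisted bundle recovers $L=l_0\otimes\OO(d)$, with $l_0$ its value at any point.

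I would prove this statement by induction on $n$. For $n=1$ there is nothing to do, as $\bP^1$ is itself a line. For $n\ge 2$, choose a hyperplane $H\cong\bP^{n-1}$ and two distinct points $p,q$ not lying on $H$. The restriction $L|_H$ is trivial on every line of $H$, a line of $H$ being a line of $\bP^n$, so by the inductive hypothesis $L|_H$ is trivial. Linear projection away from $p$ exhibits $\bP^n\setminus\{p\}$ as the total space of a line bundle over $H$ with $H$ as zero section; by the synthetic form of Quillen patching --- the relative version of the argument underlying \Cref{trivial}, i.e.\ $\A^1$-invariance of line bundles --- pulling back along this projection is an equivalence on line bundles, so $L|_{\bP^n\setminus\{p\}}$ is the pullback of $L|_H$ and hence trivial; projecting instead from $q$ onto the same $H$ shows likewise that $L|_{\bP^n\setminus\{q\}}$ is trivial. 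Now $\bP^n=(\bP^n\setminus\{p\})\cup(\bP^n\setminus\{q\})$ with intersection $\bP^n\setminus\{p,q\}$, so $L$ is described by a single transition function $\bP^n\setminus\{p,q\}\to R^\times$, which is constant by \Cref{constant-functions-Pn-minus-points}(iv). A constant transition function is a coboundary, so $L$ is trivial.

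The step I expect to be the genuine obstacle is this $\A^1$-invariance: making precise that $\bP^n\setminus\{p\}$ is a Zariski-locally-trivial line-bundle total space over $H$, and that pulling back along it is an equivalence on line bundles --- equivalently, that a line bundle on $X\times\A^1$ trivial on $X\times\{0\}$ is itself trivial. This is where Quillen patching must be invoked, exactly as in \Cref{trivial}, and it is the only non-formal input. A subsidiary point requiring care is the connectedness of $\Gr(2,R^{n+1})$ used to make $d$ well defined; note that one cannot instead compare the degrees of $L$ on lines through a common point, since in this setting two distinct points of $\bP^n$ need not lie on a common line.
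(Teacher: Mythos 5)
Your embedding step matches the paper's, and your overall strategy — reduce to degree $0$ via \Cref{line-bundle-on-line} and then prove a degree-$0$ bundle trivial — also matches. But the way you execute the two remaining steps diverges from the paper, and in both places the paper's route is more elementary and closes gaps that your proposal leaves open.

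For the well-definedness of $d$, you appeal to connectedness of $\Gr(2,R^{n+1})$ — a fact not established in the paper — and you justify this detour by claiming that two distinct points of $\bP^n$ need not lie on a common line, so that ``comparing degrees on lines through a common point'' is unavailable. That claim is false in this setting: for $p\neq q$ in $\bP^n$, representatives are unimodular vectors, $p=q$ is equivalent to vanishing of all $2\times 2$ minors of the $2\times(n+1)$ matrix, and $\neg(p=q)$ then forces some $2\times 2$ minor to be invertible (the standard SAG fact that the negation of a conjunction of equations with $0$ means the corresponding ideal is $(1)$, and $R$ local). Thus the span of $p$ and $q$ is a free rank-$2$ direct summand, which is exactly the claim $\|M = R^2\|$ the paper uses. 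The paper therefore can and does fix $q$, vary $p$, and observe that the resulting $\bP^n\setminus\{q\}\to\Z$ is constant by \Cref{constant-functions-Pn-minus-points}(ii); this is simpler than establishing connectedness of the Grassmannian.

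For the degree-$0$ case, your inductive hyperplane-projection argument is genuinely different from the paper's. The paper does \emph{not} induct: it observes that for $x\neq p$ the restriction of $L$ to the line $\Gr(1,\langle x,p\rangle)$ is constant, giving a path $P_x:L(x)=L(p)$ for all $x:\bP^n\setminus\{p\}$, similarly $Q_y:L(y)=L(q)$, and then the discrepancy $a^{-1}P_x^{-1}Q_xb:\bP^n\setminus\{p,q\}\to R^\times$ is constant by \Cref{constant-functions-Pn-minus-points}(iv), so the two local trivializations glue. Your proposal instead projects $\bP^n\setminus\{p\}$ onto a hyperplane $H\cong\bP^{n-1}$ and invokes $\A^1$-invariance of line bundles along that line-bundle total space. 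This is the gap: that invariance statement (a line bundle on the total space of a line bundle over $H$, trivial on the zero section, is pulled back from $H$) is not in the paper, and establishing it would require gluing relative Quillen patching over an affine cover of $H$ — effectively re-importing the algebraic machinery (Quillen patching, \Cref{trivial}, Appendix~2) that the geometric proof was designed to avoid. Your proof is therefore circular in spirit with the algebraic approach, and as written incomplete; the paper's direct patching via line restrictions is both shorter and self-contained given \Cref{line-bundle-on-line} and \Cref{constant-functions-Pn-minus-points}.
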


\begin{proof}
  It is enough to show that the map is surjective, by the same reasoning as in the proof of \Cref{Matthias}.
  Let $L:\bP^n\to \KR$. First we determine the degree of $L$.
  Let $p\neq q$ be points in $\bP^n$ and $M\subseteq R^{n+1}$ be the span of $p$ and $q$ as submodules of $R^{n+1}$.
  Then $\|M=R^2\|$ and we can use the inverse $i$ of the map in \Cref{line-bundle-on-line} to define $d\colonequiv \pi_1(i(L_{|\Gr(1,M)}))$.
  The integer $d$ is independent of the choice of $p$ and $q$:
  If we let $p$ vary, we get a function of type $\bP^n\setminus\{q\}\to \Z$ which is constant by \Cref{constant-functions-Pn-minus-points}.
  The same applies for $q$
  and the two subsets $\bP^n\setminus\{p\}$ and $\bP^n\setminus\{q\}$ cover $\bP^n$.

  In the following we consider only $L$ such that $d$ as constructed above is $0$.
  This means that on each line $\Gr(1,M)$, $L$ will be constant.
  So for $p,x:\bP^n$,
  and $x\neq p$ we can construct an equality $P_x:L(x)=L(p)$  by restricting $L$ to $\Gr(1,\langle x,p\rangle)$ and applying \Cref{line-bundle-on-line}.
  So we have $P:(x:\bP^n\setminus\{p\})\to L(x)=L(p)$ and for $q\neq p$ we can construct
  $Q:(y:\bP^n\setminus\{q\})\to L(y)=L(q)$ analogously.
  
  The claim follows if we show that $L$ is constant on all of $\bP^n$.
  Since, overall, we show the proposition that the map from the statement merely has a preimage,
  we can assume $a:L(p)=R^1$ and $b:L(q)=R^1$ and get:
  \[ \left((x:\bP^n\setminus\{p,q\})\mapsto a^{-1}P_x^{-1}Q_xb\right) : \bP^n\setminus\{p,q\} \to R^\times\]
  which is constantly $\lambda$ by \Cref{constant-functions-Pn-minus-points}.
  So $P$ and $Q$ can be corrected using $\lambda,a$ and $b$ to yield a global proof of constancy of $L$.
\end{proof}

\newpage

\section*{Appendix 1: Horrock's Theorem}

We present a constructive proof of the the following special case of the {\em affine}
Horrocks Theorem \cite{Lam}, V.2, for a commutative ring $A$. We
essentially follow Nashier-Nichols' Proof of Horrocks Theorem, as presented in \cite{Lam}, IV.5.
(Another constructive proof can be found in \cite{lombardi-quitte}, XVI.4).

\begin{lemma}\label{Horrocks}
  If an ideal of $A[X]$ divides a principal ideal $(f)$ with $f$ monic then it is itself a principal ideal.
\end{lemma}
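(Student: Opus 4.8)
The plan is to prove Horrocks' lemma by induction on the degree of the monic polynomial $f$, the base case $\deg f = 0$ being trivial since then $(f) = (1)$. So suppose $I \subseteq A[X]$ is an ideal with $IJ = (f)$ for some ideal $J$ and monic $f$ of degree $d \geqslant 1$. The key structural observation is that modulo $f$ the ring $A[X]/(f)$ is a free $A$-module of rank $d$, and $I/(f)$ (together with $J/(f)$) are ideals there multiplying to zero; more useful is to exploit that $I$ is a projective, even invertible, $A[X]$-module of rank $1$ — it is finitely generated (a divisor of $(f)$ is finitely generated since $(f)$ is) and locally principal. So I would first reduce to showing that the invertible $A[X]$-module $I$ is free.

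The main idea I would use is a \emph{degree/leading-coefficient} argument in the style of the Euclidean-division proofs of Horrocks and Quillen--Suslin. Among the elements of $I$, pick one, say $g$, with a view toward making it a generator. Concretely: since $IJ = (f)$ with $f$ monic, there are $g_i \in I$, $h_i \in J$ with $\sum g_i h_i = f$. I would argue that one can arrange for some element of $I$ to have \emph{monic} leading behaviour, or at least invertible leading coefficient, by a suitable $A[X]$-combination — this is where connectedness-type or localisation arguments enter, exactly mirroring how \Cref{stand} and the nilpotent-coefficient manipulations were used earlier. Once $I$ contains an element $g$ whose leading coefficient is a unit (wlog monic), Euclidean division by $g$ in $A[X]$ is available: for any $p \in I$, write $p = qg + r$ with $\deg r < \deg g$, and $r \in I$. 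Iterating / descending on degrees, I expect to show $I = (g)$ together with a lower-degree ideal to which the inductive hypothesis applies — the point being that $(g)$ divides $(f)$ (since $g$ monic divides $f$ up to the complementary factor), so the "remainder ideal" still divides a principal ideal generated by a monic polynomial of smaller degree.

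More carefully, I would set it up as follows. Write $f = g \cdot f'$ where $g \in I$ is monic of degree $e$; then $f'$ is monic of degree $d - e$, and $(f') = (f) : (g) \supseteq J$. The ideal $I$ contains $(g)$, and $I / (g) \subseteq A[X]/(g)$, a free $A$-module of rank $e$; but also $I$ divides $(g) \cdot (\text{something})$... the cleaner route is: consider $I' \colonequiv \{\, p/g \mid p \in I,\ g \mid p \,\}$ — but division is not exact. Instead use that $I \cdot J = (g)(f')$, so $I \cdot (J/(f' \cdot \text{stuff}))$... The honest statement is that one reduces, via the element $g$ of invertible leading coefficient, to an ideal of strictly smaller "content" dividing a monic of strictly smaller degree, invoking induction. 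The \textbf{main obstacle} I anticipate is precisely obtaining an element of $I$ with invertible (unit) leading coefficient: divisibility $IJ = (f)$ with $f$ monic gives $\sum g_i h_i = f$, but the individual $g_i$ need not have unit leading coefficients, and one must combine them — this is where the argument genuinely uses that we're over the generic local ring $R$ (or the Boolean-localisation / connected-components trick from \Cref{stand}), decomposing the base so that on each piece one coefficient becomes a unit and the rest nilpotent, then patching. Handling that patching constructively, without choice beyond what Zariski-local choice provides, is the delicate part; the subsequent Euclidean descent and the induction on $\deg f$ are then routine.
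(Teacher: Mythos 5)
Your outline — extract a monic element of $I$, show $I$ is generated by it, handle the base ring by comaximal localization and patch — is the right skeleton, and you correctly locate the main obstacle (producing an element of $I$ with unit leading coefficient). But the proposal has two genuine gaps where the paper does real work, and also one wrong turn.

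First, the induction on $\deg f$ is a red herring: the paper does not induct on the degree of $f$ at all, and you never actually define the inductive step. Your attempt to define a ``remainder ideal'' after dividing by $g$ runs into exactly the difficulty you notice (the set of remainders is not an ideal, and the ideal $\{p/g : g\mid p\}$ is not what you want). The paper instead uses the degree descent once, as in Lam: if $I$ contains $f$ monic of degree $N$ and also a polynomial monic mod $L$ of degree $<N$, then $I$ contains an actual monic polynomial of that degree congruent to it mod $L$. This is a local adjustment inside a fixed $I$, not an induction across smaller ideals.

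Second, and more seriously, the step from ``$I$ contains a monic $g$'' to ``$I=(g)$'' is not a consequence of Euclidean division alone, and you leave it unresolved. Division shows every $p\in I$ has a remainder in $I$ of degree $<\deg g$, but says nothing about why such remainders must vanish. The paper closes this by working in a localization $A_S$ where the monoid $S$ contains $1+(b_1,\dots,b_m)$, so $L=(b_1,\dots,b_m)$ lies in the Jacobson radical of $A_S$. There one first shows $I\cap L = I\cdot L$ — and this is precisely where the hypothesis $I\cdot J=(f)$ with $f$ \emph{regular} is used, not just that $I$ contains a monic element — hence $I=(h)+IL$, and then Nakayama finishes since $I/(h)$ is finitely generated over $A_S$. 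None of this appears in your sketch; the invertibility of $I$ as an $A[X]$-module, which you lean on informally, is not the lever the paper pulls.

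Finally, the device you gesture at for producing the monic element — ``connectedness / nilpotent-coefficient manipulations as in Lemma~\ref{stand}'' — is the wrong tool here: Lemma~\ref{stand} is about units in $A[X,1/X]$ over a \emph{connected} ring, whereas Horrocks' lemma is stated for an arbitrary commutative ring $A$. What the paper actually does is the Lombardi--Quitté formal gcd computation: build a binary tree, at each node branch on $a=0$ versus $a$ invertible, obtaining at the leaves comaximal monoids $S_1,\dots,S_l$ and, in each $A_{S_j}[X]$, a monic generator of $I$ mod the ``forced-to-zero'' ideal. The uniqueness of monic generators then allows the comaximal patching. Your proposal names the comaximal-localization idea but does not give a mechanism for the tree construction, nor does it explain why the local monic generators agree on overlaps; the paper's remark that monic generators are uniquely determined (modulo the connectedness caveat in the footnote) is what makes the gluing go through.
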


Let $L$ and $M$ be such that $L\cdot M = (f)$. We can then write $f = \Sigma u_iv_i$ with $u_i$ in $L$ and
$v_i$ in $M$. Using $f$ monic, we then have $L = (u_1,\dots,u_n)$ and $M = (v_1,\dots,v_n)$.
The strategy of the proof is to build comaximal monoids $S_1,\dots,S_l$ in $A$ \cite{lombardi-quitte},
XIV.1, such that $L$ is generated by a monic polynomial in each $A_{S_j}[X]$.

\subsection{Formal computation of gcd}


 If we have a list $u_1,\dots,u_n$ of polynomials over a field we can compute the gcd of this list
$(g) = (u_1,\dots,u_n)$ and $g$ is either $0$ (in the case where all the polynomials $u_1,\dots,u_n$ are $0$)
 or a monic polynomial. More generally, over a local ring $A$ which is residually discrete of maximal
 ideal $m$, we can compute a polynomial $g$ in $A[X]$ such that 
 $(g) = (u_1,\dots,u_n)$ modulo $m$
 and $g$ is either $0$ (in the case where all the polynomials $u_1,\dots,u_n$ are $0$ modulo $m$)
 or a monic polynomial.

In general, if we are over an arbitrary ring $A$, we can interpret this computation formally as
follows (\cite{lombardi-quitte}, XIV.1). We build a binary tree of root $A$, where at each node,
we intuitively force an element of $A$ to be in the Jacobson radical
or to be invertible modulo this ideal.
To each node is associated a pair of finite sets $I;U$
of elements in $A$ and we associate the monoid $S(I;U) = M(U) + (I)$ where $M(U)$ is the multiplicative
monoid generated by $U$ and $(I)$ the ideal generated by $I$.

Corresponding to the formal computation of the gcd, we get a binary tree where we have at each leaf
a ring $A_{S(I;U)}$ and a polynomial $g$ in $A_{S(I;U)}[X]$, which is monic or $0$, and
such that $(g) = (u_1,\dots,u_n)$ modulo the ideal generated by $I$ in $A_{S(I;U)}$.


If we do this for each branch, we get a list of monoids $S_1,\dots,S_l$
that are {\em comaximal} (\cite{lombardi-quitte}, XIV.1): if $s_i$ in $S_i$ then $1 = (s_1,\dots,s_l)$.

\subsection{Application to Horrocks' Theorem}

We assume that $(u_1,\dots,u_n)$ divides $(f)$, with $f$ monic.
The goal is to build comaximal monoids $S_1,\dots,S_l$ with $(u_1,\dots,u_n)$ principal
and generated by a monic polynomial in $A_{S_j}[X]$.

Note that $(u_1,\dots,u_n)$ contains the monic polynomial $f$.

We first build a binary tree which corresponds to the formal computation of the gcd of
$u_1,\dots,u_n$ as described above. For each branch $I;U$ we have a monic polynomial
$g$ in $A_{S(I;U)}[X]$, which belongs to $(u_1,\dots,u_n)$, and
such that $(u_1,\dots,u_n) = (g)$ modulo\footnote{Since $(u_1,\dots,u_n)$
contains a monic polynomial, the case where $g=0$ can only happen if $0$ belongs to in $S(I;U)$ and in this
case, we can replace $g$ by $1$.} the ideal generated by $I$.

The next Lemma is Lemma IV.5.1, \cite{Lam} in Lam's presentation of
Nashier-Nichols' Proof of Horrocks Theorem.

\begin{lemma}
  Let $R$ be a ring with an ideal $J$ contained in the Jacobson radical
  and $L$ an ideal of $R[X]$ which contains a monic polynomial. We consider
  the reduction modulo $J$
  $$\pi: R[X]\mapsto (R/J)[X]$$
  Any monic polynomial of $\pi(L)$ can be lifted to a monic polynomial in $L$.
\end{lemma}

 Using this Lemma, we get a monic polynomial $h$ in $(u_1,\dots,u_n)$ in $A_{S(I;U)}[X]$
 and such that $h$ generates $(u_1,\dots,u_n)$ modulo $(I)$.
 We can now use that $I$ is contained in the Jacobson radical of $A_{S(I;U)}$ and the
 following second Lemma, which corresponds to Proposition IV.5.2 of \cite{Lam},
 to conclude that we actually have $(h) = (u_1,\dots,u_n)$ in $A_{S(I;U)}[X]$.

\begin{lemma}
  Let $R$ a ring, $J$ an ideal of $R$ contained in the Jacobson radical of $R$. If
  we have $L\cdot M = (f)$ with $f$ monic in $R[X]$, and $L$ contains a monic polynomial
  $h$ such that $L = (h)$ in $(R/J)[X]$ then $L = (h)$ in $R[X]$.
\end{lemma}

\begin{proof}
  Since $L$ contains $L\cap J$ and $L\cdot M = (f)$ with $f$ regular (since $f$ is monic),
  we can find $K$   such that $L\cdot K = L\cap J$. Indeed, we have $(L\cap J)\cdot M \subseteq (f)$
  and so we can write $f K = (L\cap J)\cdot M$. We then have $f (L\cdot K) = (L\cap J)\cdot L\cdot M = f (L\cap J)$
  and so $L\cdot K = L\cap J$ since $f$ is monic.
  We then have $L\cdot K = 0$ modulo $J$ and hence $K = 0$ modulo $J$ since $L$ contains $f$
  which is monic.
  This means $L\cap J = L\cdot J$. Then we have $L = (h) + L\cdot J$.
  The result then follows from the fact that $h$ is monic and from Nakayama's Lemma, as in Lam \cite{Lam}:
  the module $P = L/(h)$ is a finitely generated module over $R$, since $f$ is monic, and satisfies
  $P\subseteq JP$ and $J$ is contained in the Jacobson radical of $R$, so $P = 0$ by Nakayama's Lemma.
\end{proof}

\begin{corollary}
  We can find comaximal elements $s_1,\dots,s_l$ such that $(u_1,\dots,u_n)$ is principal and generated by a
  monic polynomial in each $A_{s_j}[X]$. Since these monic polynomials are uniquely determined
  we can patch these generators and get that $(u_1,\dots,u_n)$ is principal in $A[X]$\footnote{If $A$ is not
  connected, the generator of $(u_1,\dots,u_n)$ may not be monic: if $e(1-e)=1$ then the ideal $(eX+(1-e))$
  divides the ideal $(X)$.}.
\end{corollary}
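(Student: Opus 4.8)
The plan is to (a) replace the comaximal \emph{monoids} constructed above by comaximal \emph{elements} of $A$; (b) observe that a monic generator of an ideal of $B[X]$ is unique over any commutative ring $B$; and (c) patch. For (a), fix a branch of the tree, with monoid $S$ and — by the two preceding lemmas — a monic $h\in A_S[X]$ with $I=(h)$ in $A_S[X]$. Unfolding this: each generator $u_i$ of $I$ is $hq_i$ for some $q_i\in A_S[X]$, and $h=\sum_i r_i u_i$ for some $r_i\in A_S[X]$. This is a finite system involving finitely many coefficients of $A_S$, each of the shape (element of $A$) over (element of $S$); letting $s\in S$ be the product of all these denominators together with the denominators of the coefficients of $h$, everything already lives in $A_s[X]$ and the equations hold there, so $I=(h)$ in $A_s[X]$ with $h$ monic. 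Carrying this out on each branch yields elements $s_1,\dots,s_l$, with $s_j$ in the $j$-th monoid and hence comaximal by the construction of the tree, and monic $h_j\in A_{s_j}[X]$ with $I=(h_j)$ in $A_{s_j}[X]$.

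For (b), suppose $g,g'$ are monic in $B[X]$ with $(g)=(g')$. Write $g'=ag$ and $g=bg'$; monic polynomials are non-zero-divisors, so $ab=1$ and $a\in B[X]^\times$. A unit of $B[X]$ has a unit constant coefficient and nilpotent higher coefficients, so if $a$ were non-constant its top coefficient would be a nonzero nilpotent and would be the leading coefficient of $ag=g'$, contradicting monicity (if $B=0$ there is nothing to prove). Hence $a\in B^\times$ is constant, and comparing leading coefficients of $g'=ag$ gives $a=1$.

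For (c), on each overlap $A_{s_is_j}[X]$ both $h_i$ and $h_j$ are monic — the image of a monic polynomial under $A_{s_i}[X]\to A_{s_is_j}[X]$ is monic of the same degree — and both generate $I$, so by (b) their images coincide. Since $(s_j)$ is comaximal, the sequence $0\to A[X]\to\prod_j A_{s_j}[X]\to\prod_{i,j}A_{s_is_j}[X]$ (last map the difference of the two restrictions) is exact, i.e.\ Zariski patching of modules applies, so the $h_j$ glue to a single $h\in A[X]$. Finally the memberships $h\in I$ and $u_i\in(h)$ may each be tested after localizing at the $s_j$, where they hold; hence $I=(h)$ in $A[X]$.

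The point worth isolating — and the reason (b) is stated separately — is that monicity does \emph{not} survive patching when $A$ is not connected: the degree of the local generator can differ from branch to branch, so no monic global generator need exist. For a nontrivial idempotent $e$, the ideal $I=(eX+(1-e))$ localizes to $(X)$, monic generator of degree $1$, over $A[1/e]\cong A/(1-e)$, and to the unit ideal, generator $1$ of degree $0$, over $A[1/(1-e)]$; these patch to the non-monic $eX+(1-e)$. So the argument must be run at the level of ideals, and it is exactly the uniqueness of the monic \emph{local} generators that legitimizes the patching despite the clash of degrees; this is also the main thing to get right in the write-up.
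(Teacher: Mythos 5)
Your proof is correct and takes essentially the same approach as the paper, which only sketches this corollary inside its statement: you fill in the three missing steps of passing from comaximal monoids to comaximal elements via a finite-presentation/filtered-colimit argument, proving uniqueness of the monic local generator, and gluing along the Zariski sheaf exact sequence for the cover by the $D(s_j)$. Your closing observation about why the patched generator need not be monic matches the paper's footnote (where, incidentally, the condition $e(1-e)=1$ is a typo for $e(1-e)=0$).
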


\newpage

\section*{Appendix 2: proof of Proposition \ref{units}}

The goal is to prove the following result.

\begin{proposition}\label{units}
  Let $A$ be a {\em connected} ring and let $t_{ij}$ be a family of  invertible elements in $T_{ij}(A)$ such that $t_{ii} = 1$
  and $t_{ij}t_{jk} = t_{ik}$. There exists an integer $N$ and $s_i$ invertible in $T_i$ such that $t_{ij} = (X_j/X_i)^Ns_j/s_i$ 
\end{proposition}

\begin{proof}
  
Using \Cref{stand}, \Cref{nilpunit} and \Cref{connected}, we can assume without loss of generality, that
$t_{ij} = (X_i/X_j)^{N_{ij}} u_{ij}$, for some $N_{ij}$ in $\Z$, where $u_{ij}(p)$ is invertible for $p = 0$
and all other coefficients $u_{ij}(p)$ for $p\neq 0$
are nilpotent. By looking at the relation  $t_{ik} = t_{ij}t_{jk}$ when we quotient by nilpotent elements, we see that
$N_{ij} = N$ does not depend on $i,j$.

  Each $u_{ij}$ is such that $u_{ij}(p)$ unit for $p=0$ and
  all $u_{ij}(p)$ nilpotent for $p\neq 0$. The goal is to find $s_i$ unit in $T_i(A)$ such that
  $u_{ij} = s_j/s_i$.

  Let $M$ be the monoid of $p = (p_0,\dots,p_n)$ such that $p_0+\dots+p_n = 0$.
  Let $M_i$ be the submonoid of $p$ in $M$ such that $p_i<0$ and $p_l\geqslant 0$ if $l\neq i$
  and $M_{ij}$ the monoid of $p$ in $M$ such that $p_i<0$ and $p_j<0$ and $p_l\geqslant 0$ if $l\neq i,j$.
  We let $L_{i}$ be the free $A$-module generated by $X^p$ for $p$ in $M_i$ and $L_{ij}$ be the
  free $A$-module generated by $X^p$ for $p$ in $M_{ij}$. We can write $T_{i}(A)$ as the direct sum $A+L_i$
  and $T_{ij}(A)$ as the direct sum $A + L_i + L_j + L_{ij}$. Furthermore we have $L_iL_j\subseteq A + L_i + L_j + L_{ij}$
  and $L_iL_{ij}\subseteq L_i + L_{ij}$.

  The element $u_{01}$ can be uniquely decomposed as $u_{01} =  a + x + y + z$ with $a$ unit in $A$ and
  $x$ in $L_0$ and $y$ in $L_1$ and $z$ in $L_{01}$. We claim that we can assume $x = y = 0$ by multiplying
  $u_{01}$ by units in $T_0(A)$ and in $T_1(A)$.
  The argument is similar to the one of \Cref{nilpotent}. We let $J$ be the nilpotent ideal generated by all
  $u_{01}(p)$ for $p\neq 0$. We have $x$ in $JL_0$ and $y$ in $JL_1$ and $z$ in $JL_{01}$.
  We multiply $u_{01}$ by $1 - a^{-1}x$, getting
  $$(1-a^{-1}x) u_{01} = a' + x' + y' + z'$$
  with $a'$ unit in $A$ and $x'$ in $J^2L_0$ and $y'$ in $JL_1$ and $z'$ in $JL_{01}$.
  If we multiply by $1- a'^{-1}x'$ we get
  $$(1-a'^{-1}x') (a'+x'+y'+z') = a'' + x'' + y'' + z''$$
  with $a''$ unit in $A$ and $x''$ in $J^3L_0$ and $y''$ in $JL_1$ and $z''$ in $JL_{01}$.
  Keeping doing this, we see that there is a unit $u$ in $T_0(A)$ such that
  $uu_{01}$ is in $A + JL_1+ JL_{01}$. By a similar reasoning, and using that $L_1L_{01}\subseteq L_1 + L_{01}$, we find a unit
  $v$ in $T_1(A)$ such that $vuu_{01}$ is in $A + JL_{01}$.
  
  I claim now that if $u_{01}$ is in $A + L_{01}$ then $u_{01}$ is actually in $A$.

  For this, we use the relation $u_{01}= u_{0l}u_{l1}$, that is
  $$u_{01}(p) = u_{0l}(p)u_{l1}(0) + u_{0l}(0)u_{l1}(p) + \Sigma_{q+r = p, q\neq 0, r\neq 0}u_{0l}(q)u_{l1}(r)\leqno{(1)}$$

  \begin{lemma}
    For all $l\neq 0,1$, we have 
    $u_{0l}(p) = u_{1l}(p) = 0$ if $p_l>0$.
  \end{lemma}
  
  \begin{proof}
  We let $K$ be the ideal generated by coefficients $u_{0l}(p)$ 
  and $u_{l1}(p)$ for $p_l>0$ and $I$ the (nilpotent) ideal generated by all coefficients $u_{0l}(p)$ and
  $u_{l1}(p)$ for $p\neq 0$. 
  We show $K\subseteq KI$.

  $(1)$ can be written as
  $$u_{0l}(p)u_{l1}(0) = u_{01}(p) - u_{0l}(0)u_{l1}(p) - \Sigma_{q+r = p, q\neq 0, r\neq 0}u_{0l}(q)u_{l1}(r)\leqno{(2)}$$
  (recall that $u_{l1}(0)$ is a unit). We use $(2)$ to show that $u_{0l}(p)$ is in $KI$ if $p_l>0$.

  The element $u_{0l}$ in $T_{0l}(A) = A+L_0+L_l+L_{0l}$, so we have $u_{0l}(p) = 0$ if $p$ is not in $M_0$ and $p_l>0$.
  So we can assume $p$ in $M_0$.

  We have $u_{01}(p) = 0$ if $p$ is in $M_0$, since $u_{01}$ is in $A+L_{01}$. We also have $u_{l1}(p) = 0$ if $p$ in $M_0$ since
  $u_{l1}$ is in $A+L_1+L_l+L_{1l}$.
  So, if $p$ is in $M_0$, we can rewrite $(2)$ as
  $$u_{0l}(p)u_{l1}(0) = - \Sigma_{q+r = p, q\neq 0, r\neq 0}u_{0l}(q)u_{l1}(r)$$
  Each member in the sum $u_{0l}(q)u_{l1}(r)$ is in $KI$ since $q_l+r_l = p_l>0$ and hence $q_l>0$ or $r_l>0$.
  So, we have $u_{0l}(p)$ in $KI$ if $p_l>0$.

  Similarly, using
  $$u_{0l}(0)u_{l1}(p) = u_{01}(p) - u_{0l}(p)u_{l1}(0) - \Sigma_{q+r = p, q\neq 0, r\neq 0}u_{0l}(q)u_{l1}(r)$$
  we show that $u_{l1}(p)$ is in $KI$ if $p_l>0$. So we have $K\subseteq KI$, as desired.

  We then deduce $K\subseteq KI^n$ for all $n$ and hence $K=0$ since $I$ is nilpotent.
  \end{proof}

  Let $p$ be in $M_{01}$. We can find $l\neq 0,1$ such that $p_l>0$. If $p = q+r$ we have $q_l>0$ or $r_l>0$ and hence
  by the Lemma, we have $u_{0l}(q) = 0$ or $u_{l1}(r) = 0$. Hence the equality $(1)$ shows that we have $u_{01}(p)= 0$.
  Since $u_{01}$ is in $A+L_{01}$, it follows that $u_{01}$ is actually a unit in $A$.

  W.l.o.g. we can assume $u_{01}= 1$. We then have $u_{0l} = u_{01}u_{1l} = u_{1l}$ in $T_{0l}(A)\cap T_{1l}(A) = T_l(A)$
  and we take $s_l = u_{0l} = u_{1l}$.
\end{proof}

Note that the proof is much simpler if $A$ is both connected {\em and} reduced (for example if $A$ is a discrete field).

Using Horrocks Theorem and the technique of Quillen patching, both explained in a constructive setting in \cite{lombardi-quitte}, we
obtain from this Proposition a constructive proof of the following purely algebraic result.

\begin{theorem}
  Let $A$ be a ring which is connected and such that $\Pic(A) = 0$. If $M_i$ is a projective module of rank $1$ over
  $T_i(A)$ and we have isomorphisms $t_{ij}:M_i\otimes T_{ij}(A) \equiv M_j\otimes T_{ij}(A)$ with $t_{ik} = t_{jk}\circ t_{ij}$
  and $t_{ii} = \mathsf{id}$, then there is an integer $d$ and invertible elements $s_i$ in $T_i(A)$
  and isomorphisms $\psi_i:M_i\simeq T_i(A)$ such that $\psi_j(t_{ij} x) = \psi_i(x)(X_j/X_i)^d s_j/s_i $ for all
  $x$ in $M_i\otimes T_{ij}(A)$.
\end{theorem}

\newpage

\section*{Appendix 3: Quillen Patching}

We reproduce the argument in Quillen's paper \cite{Quillen}, as simplified in \cite{lombardi-quitte}.
This technique of Quillen Patching has been replaced by the equivalence in Proposition \ref{Matthias}.

Let $P(X)$ be a presentation matrix of a finitely presented module $M$ over a ring $A[X]$. We say that $M$
is extended from $A$ if the matrix $P(X)$ and the matrix $P(0)$ presents isomorphic modules \cite{lombardi-quitte}, Ch. XVI.

 Quillen Patching can be presented as the following result.

\begin{theorem}\label{QP}
  If we have $M$ finitely presented of $A[X]$ and $f_1,\dots,f_n$ comaximal elements of $A$
  such that each $M\otimes_{A[X]} A[1/f_i][X]$ is extended from $A[1/f_i]$ then $M$ is extended from $A$.
\end{theorem}

Let us reformulate this result in the setting of Synthetic Algebraic Geometry \cite{draft}.
If $A$ is a finitely presented $R$-algebra, we know \cite{draft}, Theorem 7.2.3, that there is an
equivalence between $\fpMod{A[X]}$ and ${\fpMod{R}}^{\Spec(A)\times R}$, which to a module $M$
associates the family $M\otimes_{A[X]} (x,r)$ for $x:\Spec(A)$ and $r:R$. Conversely, to a family $L~x~r$
of finitely presented $R$-modules, we associate the finitely presented $A[X]$-module $\prod_{x:\Spec(A)}\prod_{r:R}L~x~r$.
We deduce from this the following characterisation of extended modules from $A$: the module corresponding to the
family $L~x~r$ is extended from $A$ if, and only if, we have $\prod_{x:\Spec(A)}\prod_{r:R}L~x~r = L~x~0$.

We can then reformulate Theorem \ref{QP} as follows.

\begin{corollary}
  If we have $f_1,\dots,f_n$ comaximal elements of $A$ and $\prod_{x:D(f_i)}\prod_{r:R}L~x~r = L~x~0$ for $i=1,\dots, n$
  then $\prod_{x:\Spec(A)}\prod_{x:R}L~x~r = L~x~0$.
\end{corollary}

It follows then from local choice that we have.

\begin{corollary}\label{QP1}
  If we have $\prod_{x:\Spec(A)}\norm{\prod_{r:R}L~x~r = L~x~0}$ then we have $\norm{\prod_{x:\Spec(A)}\prod_{x:R}L~x~r = L~x~0}$.
\end{corollary}

We can now compare Quillen's argument which uses Corollary \ref{Pic1} and Theorem \ref{QP}
instead of its refinement Corollary \ref{Matthias1}.

\begin{proposition}\label{trivial1}
  For all $V:\bP^n\rightarrow \KR$ we have $\norm{\prod_{s:R^n}V([1:s]) = V([0:1:0:\cdots :0])}$.
\end{proposition}

\begin{proof}
  We define $L:R^{n-1}\rightarrow (\bP^1 \to \KR)$ by $L~t~[x_0:x_1] = V([x_0:x_1:x_0t])$.
  Let $s=(s_1,\dots,s_{n}):R^{n}$. We apply Corollary \ref{Pic1} and we get, for all $s_2,\dots,s_n$
  \[
   \norm{V([1:s]) = L~(s_2,\dots,s_n)~[1:s_1] = L~(s_2,\dots,s_n)~[0:1]}
   \rlap{.}
   \]
   Using Corollary \ref{QP1}, we get $\norm{\prod_{s:R^n}V([1:s]) = V([0:1:0:\cdots :0])}$.
\end{proof}

If $P$ and $Q$ are two idempotent matrix of the same size, let us write $P\simeq Q$ for expressing that $P$ and $Q$ presents
the same projective module (which means that there are similar, which is in this case is the same as being equivalent).

If we have a projective module on $A[X]$, presented by a matrix $P(X)$, this module is extended
precisely when we have $P(X)\simeq P(0)$.

\begin{lemma}
  If $S$ is a multiplicative monoid of $A$ and $P(X)\simeq P(0)$ on $A_S[X]$ then there exists
  $s$ in $S$ such that $P(X+sY)\simeq P(X)$ in $A[X]$.
\end{lemma}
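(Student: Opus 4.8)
The plan is to follow the classical Quillen–Suslin argument in the form given by Lombardi–Quitté. We are given a finitely generated projective module over $A_S[X]$ presented by an idempotent matrix $P(X)$, with $P(X)\simeq P(0)$ over $A_S[X]$. By definition of $\simeq$ this means there is an invertible matrix $C(X)$ with entries in $A_S[X]$ such that $C(X)P(X)C(X)^{-1} = P(0)$. The entries of $C$ and $C^{-1}$ involve only finitely many denominators from $S$, so after clearing denominators we may assume $C(X)$ and $C(X)^{-1}$ have entries in $A[X][1/s_0]$ for a single $s_0 \in S$.

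First I would introduce a second indeterminate $Y$ and consider the matrix $D(X,Y) \colonequiv C(X+Y)C(X)^{-1}$, an invertible matrix over $A_S[X,Y]$ satisfying $D(X,0) = I$ and $D(X,Y)P(X+Y)D(X,Y)^{-1} = P(X)$ (both facts follow from $C(X+Y)P(X+Y)C(X+Y)^{-1} = P(0) = C(X)P(X)C(X)^{-1}$). The key point is that $D(X,0)=I$, so every entry of $D(X,Y) - I$ is divisible by $Y$ in $A_S[X,Y]$; write $D(X,Y) = I + Y E(X,Y)$. Then the substitution $Y \mapsto sY$ for suitable $s \in S$ should make everything integral: choosing $s$ to be a sufficiently high power of $s_0$ times the denominators occurring in $E$, the matrix $D(X,sY)$ and its inverse $D(X,sY)^{-1}$ acquire entries in $A[X,Y]$. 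Indeed $D(X,sY) = I + sY E(X,sY)$, and the inverse can be expanded as a finite geometric-type series in $sYE$ (since over $A_S[X,Y]$ the inverse is a polynomial in $Y$ of bounded degree) so that the same $s$ clears its denominators too.

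Having done this, set $U(X,Y) \colonequiv D(X,sY) = C(X+sY)C(X)^{-1}$. Then $U$ is invertible over $A[X,Y]$ and conjugates $P(X+sY)$ to $P(X)$: we have $U(X,Y)P(X+sY)U(X,Y)^{-1} = P(X)$, which is exactly the statement $P(X+sY) \simeq P(X)$ over $A[X,Y]$. I would present this last verification as a direct substitution check, noting that $P(X+sY)$ is obtained from $P(X+Y)$ by the ring map $Y \mapsto sY$, under which the conjugation identity over $A_S[X,Y]$ is preserved and the conjugating matrix becomes integral.

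The main obstacle is bookkeeping the denominators: one must check that a \emph{single} element $s \in S$ simultaneously clears denominators in $C(X+sY)$, in $C(X)^{-1}$, and in the expansion of $U(X,Y)^{-1}$, using that $U(X,Y) - I$ is divisible by $Y$ and that $U$ is polynomial of bounded degree in $Y$. This is the standard trick but it is the only place real care is needed; everything else is formal manipulation of the conjugation identity. I expect no conceptual difficulty beyond this, since the argument is purely algebraic and takes place entirely inside $A[X,Y]$ once $s$ is fixed.
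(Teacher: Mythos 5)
Your approach is the right one — this is the standard Quillen localization argument, and the overall shape (form $D(X,Y)=C(X+Y)C(X)^{-1}$, note $D(X,0)=I$, substitute $Y\mapsto sY$ to clear denominators) is exactly what Quillen and Lombardi–Quitt\'e do, which the paper cites without reproducing the proof.

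There is, however, a genuine gap in the final step, which you dismiss as ``a direct substitution check.'' Clearing denominators gives you that the entries of $D(X,sY)$ and $D(X,sY)^{-1}$ lie in the \emph{image} of $A[X,Y]\to A_S[X,Y]$, and that the conjugation identity $D(X,sY)P(X+sY)D(X,sY)^{-1}=P(X)$ holds over $A_S[X,Y]$. But the localization map $A\to A_S$ need not be injective, so once you choose lifts in $A[X,Y]$ of those entries, you only know the identities $U(X,Y)\,U'(X,Y)=I$ and $U(X,Y)P(X+sY)=P(X)U(X,Y)$ hold \emph{after} applying $A[X,Y]\to A_S[X,Y]$; they need not hold in $A[X,Y]$ itself. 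The standard repair is to observe that each of these differences is a matrix over $A[X,Y]$ that vanishes at $Y=0$ (so is divisible by $Y$) and whose image in $A_S[X,Y]$ is zero, hence is annihilated by some $t\in S$ (finitely many entries, so a single $t$ works); then replace $s$ by $st$, under which $Y\mapsto tY$ turns $Y\cdot(\text{error})$ into $Y\cdot t\cdot(\text{error})=0$, yielding honest identities over $A[X,Y]$. Without this extra enlargement of $s$, your ``direct substitution'' does not descend the identity from $A_S[X,Y]$ to $A[X,Y]$. The rest of the bookkeeping you describe (single denominator $s_0$, bounded degree in $Y$) is fine.
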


\begin{lemma}
  The set of $s$ in $A$ such that $P(X+sY)\simeq P(X)$ is an ideal of $A$.
\end{lemma}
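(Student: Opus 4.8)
The plan is to prove the two lemmas of Appendix 2, which together constitute the "Quillen patching" argument. The first lemma says: if $S$ is a multiplicative monoid of $A$ and $P(X) \simeq P(0)$ over $A_S[X]$, then there exists $s \in S$ with $P(X + sY) \simeq P(X)$ over $A[X, Y]$. The second says: the set of $s \in A$ such that $P(X + sY) \simeq P(X)$ over $A[X,Y]$ is an ideal. Since only the statement of the second lemma is given (with no proof) at the end of the excerpt, I take the target to be a proof of that ideal statement, but in practice one proves both in tandem, so I will sketch both.

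For the first lemma, the idea is the standard localization-to-finite-data move. By hypothesis there are invertible matrices (or rather a similarity/equivalence datum of idempotents) over $A_S[X]$ witnessing $P(X) \simeq P(0)$; concretely one gets matrices $U(X), V(X)$ over $A_S[X]$ with $U(X)V(X) = I$, $V(X)U(X) = I$, and $U(X)P(0)V(X) = P(X)$ — equivalently $U(X)P(0) = P(X)U(X)$ together with invertibility. Now substitute $X \mapsto X + sY$: over $A_S[X,Y]$ we have $U(X+sY)$ witnessing $P(X+sY) \simeq P(0) \simeq P(X)$ via the matrix $W(X,Y) \colonequiv U(X+sY)U(X)^{-1}$, which at $Y = 0$ is the identity. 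The entries of $W$ and $W^{-1}$ have denominators in $S$; clearing them, we can choose $s \in S$ large enough that after the substitution $X \mapsto X + sY$ all entries become polynomial over $A$, i.e. $W(X, Y)$ lifts to an honest invertible matrix over $A[X,Y]$ conjugating $P(X)$ to $P(X+sY)$. This is where one uses that $W(X,0) = I$, so that the "bad denominators" only multiply the $Y$-dependent part, and scaling $Y$ by $s$ absorbs them. The main technical point — and the step I expect to be the real obstacle — is bookkeeping the denominators carefully enough to see that a single power $s^N \in S$ suffices simultaneously for $W$ and for $W^{-1}$; this is routine in spirit but is exactly the lemma's content, so it deserves care.

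For the second lemma, let $\mathfrak{a} = \{ s \in A \mid P(X + sY) \simeq P(X) \text{ over } A[X,Y] \}$. Clearly $0 \in \mathfrak{a}$. Closure under multiplication by ring elements: if $s \in \mathfrak{a}$ and $a \in A$, substitute $Y \mapsto aY$ in the similarity datum witnessing $P(X+sY) \simeq P(X)$ to get $P(X + (as)Y) \simeq P(X)$, so $as \in \mathfrak{a}$. Closure under addition is the crux: given $s, t \in \mathfrak{a}$, we want $s + t \in \mathfrak{a}$. The trick is to use two variables: from $s \in \mathfrak{a}$, substituting $X \mapsto X + tZ$, we get $P(X + tZ + sY) \simeq P(X + tZ)$ over $A[X, Y, Z]$; from $t \in \mathfrak{a}$ we get $P(X + tZ) \simeq P(X)$ (with the second variable renamed), hence by transitivity $P(X + sY + tZ) \simeq P(X)$ over $A[X,Y,Z]$; now set $Y = Z$ to obtain $P(X + (s+t)Z) \simeq P(X)$, so $s + t \in \mathfrak{a}$. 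Thus $\mathfrak{a}$ is an ideal.

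Combining the two lemmas gives Quillen patching in the usual way: if $P(X) \simeq P(0)$ becomes true after localizing at each of a family of comaximal monoids $S_1, \dots, S_l$, then the first lemma produces $s_i \in S_i$ with $P(X + s_i Y) \simeq P(X)$, so each $s_i \in \mathfrak{a}$; since the $S_i$ are comaximal, $1 \in (s_1, \dots, s_l) \subseteq \mathfrak{a}$ by the second lemma, whence $1 \in \mathfrak{a}$, i.e. $P(X + Y) \simeq P(X)$, and specializing $X = 0$ gives $P(Y) \simeq P(0)$ globally — the projective module is extended. I would remark that, as the excerpt notes, this entire appendix is logically superseded by \Cref{Matthias}, and is included only to make the connection with Quillen's original method explicit.
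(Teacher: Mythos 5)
Your proof of the ideal property is correct: the two-variable transitivity trick (pass to $A[X,Y,Z]$, chain $P(X+sY+tZ)\simeq P(X+tZ)\simeq P(X)$, then specialize $Z\mapsto Y$) for closure under addition, the specialization $Y\mapsto aY$ for closure under scalar multiplication, and $s=0$ for nonemptiness — this is precisely Quillen's argument, which is what Appendix~2 is explicitly reproducing from \cite{Quillen} and \cite{lombardi-quitte}. The paper itself states the lemma without proof, so there is no in-text argument to compare against, but your write-up supplies exactly what is being cited.
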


\begin{corollary}
  If we have $M$ projective module of $A[X]$ and $S_1,\dots,S_n$ comaximal multiplicative monoids of $A$
  such that each $M\otimes_{A[X]} A_{S_i}[X]$ is extended from $A_{S_i}$ then $M$ is extended from $A$.
\end{corollary}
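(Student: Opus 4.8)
The plan is to combine the two preceding lemmas with the remark, recorded just above them, that a projective module over $A[X]$ presented by an idempotent matrix $P(X)$ is extended from $A$ exactly when $P(X)\simeq P(0)$. So first I would present $M$ by an idempotent matrix $P(X)$ over $A[X]$. For each $i$, applying that remark over the ring $A_{S_i}$ to the module $M\otimes_{A[X]}A_{S_i}[X]$ — which is presented by $P(X)$ regarded over $A_{S_i}[X]$ — turns the hypothesis ``extended from $A_{S_i}$'' into $P(X)\simeq P(0)$ over $A_{S_i}[X]$. The first lemma then produces, for each $i$, an element $s_i\in S_i$ such that $P(X+s_iY)\simeq P(X)$ over $A[X,Y]$.

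Next I would look at the set $\mathfrak{a}\colonequiv\{\,s\in A\mid P(X+sY)\simeq P(X)\ \text{over}\ A[X,Y]\,\}$. By the second lemma this is an ideal of $A$, and by the previous step it contains $s_i$ for every $i$. Comaximality of $S_1,\dots,S_n$ means precisely that the elements $s_1\in S_1,\dots,s_n\in S_n$ generate the unit ideal, so $1\in\mathfrak{a}$; that is, $P(X+Y)\simeq P(X)$ over $A[X,Y]$.

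Finally, specialising along the $A$-algebra homomorphism $A[X,Y]\to A[Y]$, $X\mapsto 0$ — under which $\simeq$ is preserved, since a presentation of a module base-changes to a presentation of the base-changed module — gives $P(Y)\simeq P(0)$ over $A[Y]$. Renaming $Y$ to $X$, this reads $P(X)\simeq P(0)$, so $M$ is extended from $A$ by the remark once more.

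Since the real content sits in the two lemmas and the ``extended iff $P(X)\simeq P(0)$'' remark, all of which we may assume, this last argument is essentially formal, and I do not expect a genuine obstacle. The only points that need care are bookkeeping: keeping track that $P(X+sY)$ lives over the two-variable ring $A[X,Y]$ (the first lemma being phrased loosely as ``in $A[X]$''), checking that $\simeq$ really is stable under the base changes used, and verifying that the hypotheses of the first lemma are met over each $A_{S_i}$ — that is, that the localisation of the chosen presentation is still idempotent and presents the localised module, which is immediate.
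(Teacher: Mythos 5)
Your proof is correct and is exactly the argument the paper leaves implicit: the corollary is stated without proof, immediately after the two lemmas, and the intended derivation is precisely yours — reduce the hypothesis on each $A_{S_i}[X]$ to $P(X)\simeq P(0)$, extract an $s_i\in S_i$ with $P(X+s_iY)\simeq P(X)$ from the first lemma, use the second lemma plus comaximality (which in this text means $s_i\in S_i$ implies $1\in(s_1,\dots,s_n)$) to get $1$ in the Quillen ideal, and specialise $X\mapsto 0$ to conclude $P(X)\simeq P(0)$ over $A[X]$. Your attention to the two minor bookkeeping points (that $P(X+sY)$ lives over $A[X,Y]$, and that $\simeq$ is stable under base change) is exactly what is needed to make the sketch rigorous.
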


Let us reformulate in synthetic term this result. Let $A$ be a f.p. $R$-algebra and $L:\Spec(A)\rightarrow B\Gm^{\A^1}$.
Then $L$ corresponds to a projective module of rank $1$ on $A[X]$. We can form
$$T(x) = \prod_{r:R}L~x~r = L~x~0$$
and $\|T(x)\|$ expresses that $L~x$ defines a trivial line bundle on $\A^1 = \Spec(R[X])$.
It is extended exactly when we have
$\|{\prod_{x:\Spec(A)}T(x)}\|$. We can then use Zariski local choice to state.

\begin{proposition}\label{c2}
  We have the implication $(\prod_{x:\Spec(A)}\|T(x)\|)\rightarrow \|\prod_{x:\Spec(A)}T(x)\|$.
\end{proposition}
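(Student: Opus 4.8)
The plan is to reduce the statement to the algebraic Quillen patching Corollary proved just above, with Zariski local choice supplying the passage from ``pointwise triviality'' to ``triviality on a standard open cover''. Recall from the discussion preceding the proposition that $L:\Spec(A)\to B\Gm^{\A^1}$ corresponds to a rank-$1$ projective module $M$ over $A[X]$, presented by an idempotent matrix $P(X)$; that for fixed $x:\Spec(A)$ the proposition $\|T(x)\|$ asserts that the line bundle $L\,x$ is trivial on $\A^1$; and that $M$ being extended from $A$ is the same thing as $\|\prod_{x:\Spec(A)}T(x)\|$. The same dictionary applies after inverting an element $f:A$: over $D(f)=\Spec(A[1/f])$ the restricted bundle corresponds to $M\otimes_{A[X]}A[1/f][X]$, and a term of $\prod_{x:D(f)}T(x)$ witnesses that this localized module is extended from $A[1/f]$.

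First I would apply Zariski local choice \cite{draft} to the family $x\mapsto T(x)$ over the affine scheme $\Spec(A)$: the hypothesis $\prod_{x:\Spec(A)}\|T(x)\|$ then yields, merely, finitely many $f_1,\dots,f_n:A$ generating the unit ideal together with sections $s_i:\prod_{x:D(f_i)}T(x)$ for each $i$. By the remark above, each $s_i$ exhibits $M\otimes_{A[X]}A[1/f_i][X]$ as extended from $A[1/f_i]$. Next I would invoke the Quillen patching Corollary above with the comaximal multiplicative monoids $S_i$ generated by the $f_i$: it gives that $M$ itself is extended from $A$, i.e.\ $P(X)\simeq P(0)$. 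Translating back through the dictionary, this is precisely an element of $\prod_{x:\Spec(A)}T(x)$; since the whole argument takes place under the propositional truncation coming from Zariski local choice and the goal is itself a proposition, we conclude $\|\prod_{x:\Spec(A)}T(x)\|$.

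The bookkeeping with Zariski local choice and the monoids is routine; the one place that needs care — the main obstacle — is checking that a section of $T$ over $D(f_i)$ really corresponds to the localized module $M\otimes_{A[X]}A[1/f_i][X]$ being \emph{extended} from $A[1/f_i]$, rather than to some weaker or differently phrased condition. This amounts to verifying that the equivalence between line bundles on affine schemes and finitely presented rank-$1$ projective modules (from \cite{draft}) is natural in the base and compatible with the identifications $D(f)\times\A^1=\Spec(A[1/f][X])$ and $A[X]=R^{\A^1\times\Spec(A)}$, so that the algebraic Corollary can be applied verbatim to $M$. Once that compatibility is recorded, nothing further is needed beyond the results already cited.
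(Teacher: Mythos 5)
Your proposal is correct and takes essentially the same approach the paper intends: Zariski local choice passes from pointwise triviality to a finite Zariski cover $D(f_1),\dots,D(f_n)$ with trivializing sections, and the algebraic Quillen-patching corollary applied to the comaximal monoids generated by the $f_i$ then gives that $M$ is extended, which translates back to $\|\prod_{x:\Spec(A)}T(x)\|$. The paper leaves this argument implicit (the proposition is stated directly after the phrase ``We can then use Zariski local choice''), and your write-up spells out exactly the intended chain of reductions, including the dictionary compatibility over each $D(f_i)$.
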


\newpage

\section*{Appendix 4: Classical argument}

We reproduce a message of Brian Conrad in MathOverflow \cite{conrad-mathoverflow-16324}.

\medskip

``We know that the Picard group of projective $(n-1)$-space over a field $k$ is $\Z$
generated by $\OO(1)$.
This underlies the proof that the automorphism group of such a projective space is $\PGL_n(k)$.
But what is the automorphism group of $\bP^{n-1}(A)$ for a general ring $A$? Is it $\PGL_n(A)$?
It's a really important fact that the answer is yes.
But how to prove it? It's a shame that this isn't done in Hartshorne.

By an elementary localization, we may assume $A$ is local.
In this case we claim that $\Pic(\bP^{n-1}(A))$ is infinite cyclic generated by $\OO(1)$.
Since this line bundle has the known $A$-module of global sections,
it would give the desired result if true by the same argument as in the field case.
And since we know the Picard group over the residue field, we can twist
to get to the case when the line bundle is trivial on the special fiber. How to do it?

\medskip

 Step 0: The case when $A$ is a field. Done.

 \medskip

 Step 1: The case when $A$ is Artin local.
 This goes via induction on the length, the case of length $0$ being Step $0$
 and the induction resting on cohomological results for projective space over the residue field.

  \medskip

 Step 2: The case when $A$ is complete local noetherian ring. This goes
 using Step 1 and the theorem on formal functions (formal schemes in disguise).

  \medskip

 Step 3: The case when $A$ is local noetherian.
 This is faithfully flat descent from Step 2 applied over $A~\widehat{}$

 \medskip
 
 Step 4: The case when $A$ is local:
 descent from the noetherian local case in Step 3 via direct limit arguments.

\medskip
 
QED''

\printindex

\printbibliography

\end{document}